\newcounter{derpp}
\newtheorem{thm}{Theorem}[section]
\newtheorem*{thm*}{Theorem}
\newtheorem{cor}[thm]{Corollary}
\newtheorem*{cor*}{Corollary}
\newtheorem{prop}[thm]{Proposition}
\newtheorem{lem}[thm]{Lemma}
\newtheorem{conj}[thm]{Conjecture}
\newtheorem*{conj*}{Conjecture}
\newtheorem{quest}[thm]{Question}
\theoremstyle{definition}
\newtheorem{defn}[thm]{Definition}
\newtheorem{exmp}[thm]{Example}
\newtheorem{exmps}[thm]{Examples}
\theoremstyle{remark}
\newtheorem{rem}[thm]{Remark}
\DeclareMathOperator{\Spec}{Spec}
\DeclareMathOperator{\codim}{codim}
\DeclareMathOperator{\Tor}{Tor}
\DeclareMathOperator{\Supp}{Supp}
\DeclareMathOperator{\ord}{ord}
\DeclareMathOperator{\trdeg}{tr.deg}
\DeclareMathOperator{\Zar}{Zar}
\DeclareMathOperator{\ann}{ann}
\DeclareMathOperator{\hgt}{ht}
\DeclareMathOperator{\ch}{CH}
\DeclareMathOperator{\Nis}{Nis}
\newcommand{\mcf}{\mathcal{F}}
\newcommand{\mcg}{\mathcal{G}}
\newcommand{\mco}{\mathcal{O}}
\newcommand{\mce}{\mathcal{E}}
\newcommand{\mck}{\mathcal{K}}
\newcommand{\mcm}{\mathcal{M}}
\newcommand{\fm}{\mathfrak{m}}
\newcommand{\fn}{\mathfrak{n}}
\newcommand{\fp}{\mathfrak{p}}
\newcommand{\fq}{\mathfrak{q}}
\newcommand{\ds}{\displaystyle}
\newcommand{\ovp}{\frac{1}{p}}
\newcommand{\ovpi}{\frac{1}{\pi}}
\newcommand{\fcolim}{\ds \varinjlim_{\lambda \in \Lambda}}
\newcommand{\finvlim}{\ds \varprojlim_{\lambda \in \Lambda}}
\newcommand{\mbf}{\mathbf{F}}
\newcommand{\modl}{\mathbb{Z}/\ell \mathbb{Z}}
\newcommand{\dumcg}{\underline{\underline{\mathcal{G}}}}
\let\c@equation\c@thm
\numberwithin{equation}{section}
\address{Department of Mathematics \\
University of Illinois at Chicago \\
851 S. Morgan St. \\
Chicago, IL 60607}
\email[Chris Skalit]{cskali2@uic.edu}
\title{Regular Morphisms and Gersten's Conjecture}
\author{C. Skalit}
\begin{document}
\begin{abstract} We prove that when $X \to Y$ is a (geometrically) regular morphism of Noetherian schemes, then from a Nisnevich-local perspective, the Gersten complex for Quillen $K$-theory on $X$ becomes acyclic in degrees beyond the Krull dimension of $Y$. Using our methods, we also reduce the general Gersten conjecture for regular, unramified local rings to the case of a discrete valuation ring which is essentially smooth over $\mathbb{Z}$. We apply our results to the the theory of algebraic cycles --- globally to obtain relative versions of Bloch's Formula and locally to address the Claborn-Fossum Conjecture concerning the vanishing of Chow groups for regular local rings.  \end{abstract}
\maketitle

\section{Introduction}Fix a Noetherian scheme $X$. From the classical work of Quillen \cite[VII.5]{Quillen}, the codimensional filtration on the category of coherent $\mco_X$-modules gives, for each $n \geq 0$, a ``Gersten complex'' $\mcg_n(X)$:
\[ 0 \to \bigoplus_{x \in X^0}K_n(k(x)) \to \bigoplus_{x \in X^1}K_{n-1} \to \cdots \to \bigoplus_{x \in X^{n-1}}K_1(k(x)) \to \bigoplus_{x \in X^n}K_0(k(x)) \to 0 \]
where $X^p$ denotes the set of all points having codimension $p$ in $X$. The present article concerns the following acyclicity conjecture:

\begin{conj*}[Gersten's Conjecture] \cite[VII.5.10]{Quillen} Let $X = \Spec A$ where $(A,\fm_A,L)$ is a regular local ring. Then $\mcg_n(X)$ resolves $K_n(X)$ in the following sense:
\[ H^p(\mcg_n(X)) = \left\{ \begin{array}{ll} K_n(X) & p = 0 \\ 0 & p > 0 \end{array} \right. . \]
\end{conj*}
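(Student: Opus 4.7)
The plan is to execute the reduction sketched in the abstract, which addresses the unramified case (the ramified one appears to lie outside the scope of the present methods). Let $(A, \fm_A, L)$ be a regular local ring which is unramified. Popescu's theorem presents $A$ as a filtered colimit of essentially smooth $\mathbb{Z}$-algebras $A_\lambda$, and continuity of $K$-theory (hence of $\mcg_n$) reduces the conjecture to each $A_\lambda$ individually.

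For $A_\lambda$ essentially smooth over a field $F$, the result is classical: Quillen's geometric presentation lemma produces, for any positive-codimension coherent sheaf $\mcf$ on $\Spec A_\lambda$, an étale neighborhood and a projection $\mathbb{A}^d_F \to \mathbb{A}^{d-1}_F$ that is finite on $\Supp(\mcf)$, and combining this with the projection formula and homotopy invariance of $K$-theory on regular schemes moves supports out of every positive codimension. For $A_\lambda$ essentially smooth over $\mathbb{Z}$ but not over a field, I would invoke the paper's main global theorem for $X = \Spec A_\lambda \to Y = \Spec \mathbb{Z}$: since this morphism is regular, the theorem yields Nisnevich-local acyclicity of $\mcg_n(X)$ in all degrees $> \dim Y = 1$, leaving only the two-term tail
\[ \bigoplus_{x \in X^0} K_n(k(x)) \to \bigoplus_{x \in X^1} K_{n-1}(k(x)) \]
to be analyzed. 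Nisnevich-locally on $X$, each summand on the right arises from the localization of $A_\lambda$ at a codimension-one horizontal prime, which is a DVR essentially smooth over $\mathbb{Z}$.

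The main obstacle is thus the residual two-term assertion over such a DVR $(R, \pi)$ with fraction field $F$ and residue field $k$: one must show the truncated complex
\[ 0 \to K_n(F) \to K_{n-1}(k) \to 0 \]
is acyclic at $K_n(F)$, equivalently that $K_n(R) \to K_n(F)$ is injective. Quillen's moving trick is unavailable here because $\Spec \mathbb{Z}$ is one-dimensional and arithmetic, so a coherent sheaf on $R$ supported at the closed point cannot be displaced by a generic projection and there is no transverse direction in which to deform it. I would expect progress to require genuinely arithmetic input --- Gabber-type rigidity, syntomic or $p$-adic comparisons between $K_\ast(R)$ and $K_\ast(F)$, or a dévissage through the special fibre leaning on results of Geisser--Levine --- and this is precisely the residual case that the paper isolates and leaves open rather than resolves.
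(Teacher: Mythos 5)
You have correctly recognized that the statement is an open conjecture and that the best one can hope to do is reduce it to simpler cases; the paper's Theorem \ref{unramifiedconj} is precisely such a reduction, and your endpoint (the arithmetic DVR case) matches the paper's Conjecture \ref{gdvr}. However, your reduction has a genuine gap at the very first step. You assert that ``continuity of $K$-theory (hence of $\mcg_n$) reduces the conjecture to each $A_\lambda$ individually.'' This is exactly the move the paper warns against in the introduction: the Gersten complex is only functorial for flat maps, and the Popescu transition maps $A_\lambda \to A_{\lambda'}$ are far from flat, so $\mcg_n^{\cdot}(A)$ is \emph{not} a filtered colimit of $\mcg_n^{\cdot}(A_\lambda)$ and its cohomology cannot be computed degree-by-degree in the limit. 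Panin's theorem for equicharacteristic local rings would be an immediate triviality if this kind of continuity held, and the entire machinery of Sections \ref{gersten_filter} and \ref{popescu_consequence} exists precisely to circumvent the failure.

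The paper's actual route through Theorem \ref{unramifiedconj} avoids this. It first uses Proposition \ref{gl_improved} (which is itself proved by the Panin trick of recasting Gersten cohomology of the regular, equicharacteristic fibres as Zariski sheaf cohomology and then applying the Grothendieck limit theorem, a continuity statement about \emph{sheaf cohomology}, not about the Gersten complex) to collapse $\mcg_n^{\cdot}(A)$ to the two-term complex $0 \to K_n(A[\ovp]) \to K_{n-1}(A/pA) \to 0$. This reduces the conjecture to the vanishing of the transfer $K_n(A/pA) \to K_n(A)$. Only \emph{then} is continuity invoked, and only for the $K$-groups $K_n(A/pA) = \varinjlim K_n(A_\lambda/pA_\lambda)$, which is legitimate; a Tor-independence check is also needed to make the base-change square of transfer maps commute. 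So your broad outline is the right one, and your diagnosis of the remaining arithmetic obstruction is accurate, but you should replace the naive colimit step with the two-term quasi-isomorphism of Proposition \ref{gl_improved} followed by the transfer-map argument, since the direct limit argument on the full Gersten complex does not go through.
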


Quillen [ibid.] proved this conjecture when $A$ is the local ring of a smooth variety defined over a field. The generalization to equicharacteristic regular local rings came much later from the work of Panin \cite{Panin}. To date, the case of mixed characteristic remains largely open; the most general result in this direction, due to Gillet and Levine \cite{GilletLevine}, states that when $A$ is local and essentially smooth over a DVR, then $\mcg_n^{\cdot}(A)$ is acyclic in degree $2$ and higher. Our main theorem aims to generalize this result in two directions: first by replacing the base DVR with an arbitrary local ring and second by replacing ``essentially smooth'' with ``geometrically regular\footnote{Many authors use the term ``regular morphism'' for what we shall call a ``geometrically regular morphism.'' See Definition \ref{g_reg} and the remark that follows it.},'' thereby eliminating all finite-type hypotheses. We do this at the cost of Henselization, thus producing a Nisnevich-local result:

\begin{thm*}Let $(R,\fm_R,k) \to (A,\fm_A,L)$ be a geometrically regular morphism of Noetherian local rings, and assume $A$ is Henselian. Then $H^p(\mcg_n^{\cdot}(A)) = 0$ for all $p > \dim R$ and all $n \geq 0$.
\end{thm*}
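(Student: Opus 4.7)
The plan is to follow the Quillen--Gillet--Levine strategy, adapted to an arbitrary Noetherian local base $R$ via Popescu's theorem and the Henselian hypothesis on $A$.

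First, Popescu's desingularization presents $A$ as a filtered colimit of smooth $R$-algebras. Since Quillen $K$-theory and the Gersten complex commute with filtered colimits, one reduces to the case $A = (B_\fp)^h$, the Henselization at a prime $\fp \supset \fm_R$ of a smooth finite-type $R$-algebra $B$. Write $d := \dim A - \dim R$ for the relative dimension.

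The technical heart of the proof will be a relative geometric presentation lemma. Given a coherent sheaf $M$ on $X = \Spec A$ with $\codim_X \Supp M \geq p > \dim R$, the aim is to produce a finite projection $\pi \colon X \to Y$, where $Y$ is the Henselization of a smooth $R$-algebra of relative dimension $d-1$, such that $\pi|_{\Supp M}$ is finite onto a closed subscheme and $\pi$ is \'etale in a neighborhood of $\Supp M$. Over a DVR, this is the central construction of \cite{GilletLevine}, achieved by a Bertini-type genericity argument on the closed and generic fibers of $\Spec A \to \Spec R$. For higher-dimensional $R$ the plan is to induct on $\dim R$: at each stage handle the generic slice by the codimension-$1$ method on the localization of $R$ at a minimal prime of positive codimension, and then use the Henselianness of $A$ to patch the choices together into a single projection defined over $R$.

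Granted such a presentation, Quillen's $\mathbf{M}_p$-filtration on the category of coherent sheaves supplies an explicit chain homotopy between the identity on $\mcg_n^\cdot(A)$ and a map factoring through $\mcg_n^\cdot(Y)$. Inducting on $d$---with the base case $d = 0$ being vacuous, since then $\dim A = \dim R$ and the Gersten complex has no terms above degree $\dim R$---this reduces the vanishing of $H^p(\mcg_n^\cdot(A))$ in degrees $p > \dim R$ to the same statement on $Y$, whose relative dimension over $R$ is strictly smaller. The main obstacle, by a significant margin, is the presentation lemma for $\dim R > 1$: the Gillet--Levine argument exploits the two-stratum structure of a DVR in an essential way, and arranging a single projection that remains well-behaved across every codimension stratum of $\Spec R$ simultaneously is the principal novel difficulty. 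The Henselianness of $A$---which converts the existence of \'etale sections into Nisnevich-local trivializations---is what should make such an inductive gluing feasible.
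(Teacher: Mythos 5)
Your very first reduction contains a fatal error: you assert that ``the Gersten complex commutes with filtered colimits,'' and use this to reduce to $A=(B_\fp)^h$ with $B$ smooth of finite type over $R$. This is false, and it is precisely the obstruction the paper is built to circumvent. The Gersten complex $\mcg_n^{\cdot}(-)$ is only functorial along \emph{flat} maps (flat pullback preserves the coniveau filtration $\mcm^p$), but the transition maps $A_\lambda\to A_{\lambda'}$ in the Popescu system are nothing like flat. A prime of $A_\lambda$ need not extend to a prime of $A_{\lambda'}$ of the same height, so the terms $\bigoplus_{x\in X_\lambda^p}K_{n-p}(k(x))$ simply do not assemble into a direct system whose colimit is $\mcg_n^{\cdot}(A)$. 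The paper's workaround is not to pass to the limit in the Gersten complex itself, but to filter $\mcg_n^{\cdot}(A)$ by the map $\Spec A\to\Spec R$ (Lemma \ref{filter3}, Proposition \ref{ss_filtration}), producing a spectral sequence whose $E_1$-page is built from $H^q(\mcg_{n-p}^{\cdot}(X_y))$ for the fibres $X_y$. Since each fibre is equicharacteristic and regular, Panin's theorem lets one rewrite these as Zariski sheaf cohomology groups $H^q_{\Zar}(X_y,\mck_n)$, and \emph{that} invariant (unlike the Gersten complex) does commute with filtered inverse limits of affines by the Grothendieck limit theorem. This indirection through sheaf cohomology is the whole point and it is missing from your argument.

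The second half of your proposal also diverges substantially from what the paper does, and you concede the gap yourself. The paper does not prove or use a Gabber-style ``relative geometric presentation lemma'' demanding a finite projection $\pi$ that is \'{e}tale in a neighborhood of $\Supp M$, nor does it induct on $\dim R$. Its finite-type input (Lemma \ref{structure_nisnevich}) is a Quillen-style normalization: a local ring $C$ with $C\to A^h$ only $\fm_{A^h}$-smooth (not \'{e}tale), and $C\to A^h/fA^h$ finite and flat. Formal smoothness of the section is exactly enough, via Lemma \ref{section}, to make the kernel of the retraction $D\to A^h/fA^h$ free of rank one and run the null-homotopy of Proposition \ref{main_thm_weak}, killing $K_n(\mcm^{p+1}(A^h))\to K_n(\mcm^p(A^h))$ for all $p\geq\dim R$ in one stroke with no induction on $\dim R$. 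Your stronger presentation lemma is acknowledged by you to be ``the principal novel difficulty'' and remains unproven; even if granted, you would still need the sheaf-cohomological mechanism to bridge from the finite-type situation to a general Henselian geometrically regular $A$, because of the non-flatness obstruction above. As written, the proposal is an outline with an incorrect first step and an unestablished central lemma, not a proof.
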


The theorem, which appears below as Theorem \ref{mainthm}, is proved in two main steps. First, in Section \ref{finite_case}, we handle the theorem in the finitistic case for $A_{\lambda}$ the Henselization of an essentially smooth $R$-algebra. This requires a number of normalization lemmas, many of which involve the notion of formal smoothness, which we recall in Section \ref{formal_smoothness}. 

For the second step, we use Popescu's Theorem \cite{Popescu} to realize $A$ as a filtered colimit of local rings $A_{\lambda}$, each the Henselization of an essentially smooth $R$-algebra. We cannot blindly pass to the limit since the Gersten complex is only functorial with respect to flat maps, and the transition maps $A_{\lambda} \to A_{\lambda'}$ in our filtered system are far from flat. To bypass this obstacle, we introduce a filtration on the Gersten complex of $A$ in Section \ref{gersten_filter}, giving rise to a spectral sequence whose $E_1$ page contains the data of the Gersten complex on each fibre ring of the map $R \to A$. Since fibre rings are equicharacteristic and regular, we use Panin's result \cite{Panin} to recast Gersten cohomology as Zariski cohomology of the $K$-theory sheaves $\mck_n$ (see Section \ref{popescu_consequence}). As the fibre rings of $R \to A$ are approximated by those of $R \to A_{\lambda}$, we can invoke Grothendieck's limit theorem to kill off enough of the $E_1$ page to prove our theorem.

In Section \ref{unramified}, we turn our attention to unramified regular local rings of mixed characteristic. As they are necessarily geometrically regular over $\mathbb{Z}_{(p)}$, we use Popescu's Theorem to sharpen the results of \cite{GilletLevine} and make the following reduction (Theorem \ref{unramifiedconj} below):

\begin{thm*}The following conjectures are equivalent:
\begin{enumerate}
\item[(a)] Gersten's Conjecture for a discrete valuation ring $(R,pR,k)$, essentially smooth over $\mathbb{Z}$.
 \item[(b)] Gersten's Conjecture for an unramified, regular local ring of mixed characteristic $(p,0)$.
\end{enumerate}
\end{thm*}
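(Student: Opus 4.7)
The implication $(b) \Rightarrow (a)$ is immediate: any DVR $(R, pR, k)$ essentially smooth over $\mathbb{Z}$ is itself a one-dimensional unramified regular local ring of mixed characteristic $(p, 0)$, since its uniformizer is $p$, its residue field $k$ has characteristic $p$, and its fraction field has characteristic $0$. Hence conjecture (a) is a special case of conjecture (b).

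For the substantive direction $(a) \Rightarrow (b)$, let $(A, \fm_A, L)$ be an unramified regular local ring of mixed characteristic $(p, 0)$. The plan is to combine Theorem \ref{mainthm}, the codimension-in-$R$ filtration, and Popescu approximation to reduce matters to the DVR case. Since $A/pA$ is regular (the content of ``unramified'') and $\mathbb{F}_p$ is perfect, the canonical structural map $\mathbb{Z}_{(p)} \to A$ is geometrically regular. After a standard reduction to the Henselian case (via the Nisnevich-local formulation underpinning the main theorem), Theorem \ref{mainthm} applied with $R = \mathbb{Z}_{(p)}$ immediately yields $H^p(\mcg_n^{\cdot}(A)) = 0$ for every $p > 1$ and every $n \geq 0$, matching the Gillet-Levine range.

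To close the remaining gap in degrees $p \in \{0, 1\}$, filter $\mcg_n^{\cdot}(A)$ by codimension in $\mathbb{Z}_{(p)}$; since $\dim \mathbb{Z}_{(p)} = 1$ there are only two nontrivial filtration layers, and the associated graded pieces fit into the short exact sequence of cochain complexes
\[ 0 \to \mcg_{n-1}^{\cdot}(A/pA)[-1] \to \mcg_n^{\cdot}(A) \to \mcg_n^{\cdot}(A[1/p]) \to 0. \]
Panin's theorem applies stalk-wise to the two outer complexes: $A/pA$ is regular local equicharacteristic $p$, and every stalk of $A[1/p]$ is a regular local ring containing $\mathbb{Q}$. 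Comparing the long exact sequence in cohomology with Quillen's localization sequence
\[ \cdots \to K_n(A/pA) \xrightarrow{i_*} K_n(A) \to K_n(A[1/p]) \to K_{n-1}(A/pA) \to \cdots, \]
Gersten acyclicity of $\mcg_n^{\cdot}(A)$ in degrees $0$ and $1$ reduces to the vanishing of the pushforward $i_* \colon K_n(A/pA) \to K_n(A)$ for every $n \geq 0$.

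To secure this vanishing from hypothesis (a), invoke Popescu's Theorem to write $A = \varinjlim A_{\lambda}$ with each $A_{\lambda}$ local essentially smooth over $\mathbb{Z}_{(p)}$. Since $K$-theory and the pushforwards $i_*^{A_{\lambda}}$ commute with filtered colimits, it suffices to produce the vanishing for each $A_{\lambda}$. A Gabber-style geometric-presentation lemma (akin to the normalizations of Section \ref{finite_case}) locates, for each $A_{\lambda}$ of dimension $d$, a DVR $D_{\lambda} := \mathbb{Z}[t_1, \ldots, t_{d-1}]_{(p)}$ essentially smooth over $\mathbb{Z}$ (with uniformizer $p$) such that $A_{\lambda}$ is essentially smooth over $D_{\lambda}$. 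Hypothesis (a) supplies $i_*^{D_{\lambda}} = 0$, and a transfer-style argument propagates this to $i_*^{A_{\lambda}} = 0$, completing the reduction upon passage to the colimit. The main obstacle will be this last transfer step, namely sharpening the Gillet-Levine presentation technique so that the vanishing of pushforward along the special fibre propagates from the base DVR $D_{\lambda}$ up through the positive-relative-dimension essentially smooth map $D_{\lambda} \to A_{\lambda}$; this requires the normalization machinery developed in the earlier sections of the paper.
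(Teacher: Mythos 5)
Your proposal has the right overall skeleton and correctly dispatches $(b)\Rightarrow(a)$, but for $(a)\Rightarrow(b)$ there are two concrete problems.

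First, there is no ``standard reduction to the Henselian case.'' Gersten's conjecture for $A$ is a statement about $A$ itself, and knowing it for $A^h$ gives you nothing about $A$: the Nisnevich-local formulation underlying Theorem \ref{mainthm} is precisely what distinguishes that theorem from the Zariski-local statement you'd need, and the gap between the two is real (this is exactly the caveat flagged in the remark following Corollary \ref{nisnevich_acyclicity}). In the paper this step is avoided entirely: Proposition \ref{gl_improved} gives the two-term resolution $K_n(A[\frac{1}{p}])\to K_{n-1}(A/pA)\stackrel{\sim}{\to}\mcg_n^{\cdot}(A)$ without any Henselization, by running the Gillet--Levine double-complex decomposition and Panin/Popescu sheaf-cohomology argument directly on $A$. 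Your filtration step is the right idea and is what actually does the work; the Theorem \ref{mainthm} detour is a red herring you should delete.

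Second, and more seriously, the pivotal step -- deducing Gersten for each $A_\lambda$ from hypothesis (a) -- is exactly where your proposal stops. You acknowledge it as ``the main obstacle'' and propose to construct an explicit DVR $D_\lambda=\mathbb{Z}[t_1,\dots,t_{d-1}]_{(p)}$ with $A_\lambda$ essentially smooth over $D_\lambda$ and then run a transfer argument. This would require (i) a normalization lemma guaranteeing such a $D_\lambda$ exists, and (ii) the actual mechanism propagating vanishing of the transfer up the relative dimension -- neither of which you supply, and both of which are nontrivial. The paper sidesteps the whole construction: it applies hypothesis (a) to the DVR $(A_\lambda)_{(p)}$ (an automatic localization, already essentially smooth over $\mathbb{Z}$, no normalization needed) and then invokes Gillet--Levine \cite[Cor.~6]{GilletLevine}, which says precisely that Gersten for $(A_\lambda)_{(p)}$ implies Gersten for $A_\lambda$. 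The vanishing of the transfer $K_n(A_\lambda/pA_\lambda)\to K_n(A_\lambda)$ then follows from \cite[VII.5.6]{Quillen}, and the passage to the colimit $A=\varinjlim A_\lambda$ is justified via the Tor-independence of $A$ and $A_\lambda/pA_\lambda$ over $A_\lambda$ (since $p$ is a nonzerodivisor on both), which gives the commutative square of transfers via \cite[VII.2.11]{Quillen}. Your phrase ``$K$-theory and the pushforwards commute with filtered colimits'' conceals this step; the transition maps $A_\lambda\to A_\mu$ are not flat, so compatibility of the transfers is exactly the content of that Tor-independence argument, not a formality.

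In short: the structure (localize the problem to the transfer map, Popescu down to $A_\lambda$, reduce to a DVR, close via hypothesis (a)) is correct, but your proposal leaves the central reduction unproved and contains an erroneous Henselization claim. Replace both with the appeal to Proposition \ref{gl_improved}, $(A_\lambda)_{(p)}$, Gillet--Levine Cor.~6, and the Tor-independence diagram.
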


Any mixed characteristic DVR which is essentially of finite-type over $\mathbb{Z}$ has a residue field that is finitely-generated over $\mathbb{F}_p$. This, in turn, opens the door to a host of arithmetic techniques. We close Section \ref{unramified} by establishing the relationship with the Beilinson-Parshin Conjecture \cite{Kahn} and then proving the Gersten Conjecture for DVRs with sufficiently ``small'' residue field.

The most immediate corollary of Quillen's original proof of Gersten's Conjecture in the geometric case is Bloch's Formula \cite[VII.5.19]{Quillen}, which states that for a smooth variety $X$, there are isomorphisms
\[ H^p_{\Zar}(X,\mck_n) \cong \ch^p(X). \]
In Section \ref{chow}, we establish analogues of Bloch's Formula for our relative setting by relating the Chow groups of a Noetherian scheme $X$ to various Zariski or Nisnevich hypercohomology groups.

When $(A,\fm_A,L)$ is a regular local ring, the classical Claborn-Fossum Conjecture \cite{ClabornFossum} states that $\ch^p(A) = 0$ for $p > 0$. From our preceding results on the Gersten complex, we obtain the following as Corollary \ref{cf_results} below:

\begin{cor*}Let $(R,\fm_R,k) \to (A,\fm_A,L)$ be a geometrically regular map of Noetherian local rings.
\begin{enumerate}
\item[(i)] If $A$ is Henselian, $\ch^n(A) = 0$ for all $n > \dim R$.
\item[(ii)] If $R$ is a discrete valuation ring, then $\ch^n(A) = 0$ for all $n > 0$.
\end{enumerate}
\end{cor*}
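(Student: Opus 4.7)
The key observation is that the codimension-$n$ Chow group appears as the top cohomology of the $K_n$ Gersten complex: identifying $K_1$ of a residue field with its multiplicative group and $K_0$ with $\mathbb{Z}$, the rightmost differential
\[ \bigoplus_{x \in X^{n-1}} K_1(k(x)) \to \bigoplus_{x \in X^n} K_0(k(x)) \]
is exactly the Weil divisor map, so its cokernel is $\ch^n(A)$ by definition. Hence $\ch^n(A) = H^n(\mcg_n^{\cdot}(A))$, and both parts of the corollary ought to reduce to acyclicity statements already established.

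Part (i) is then immediate: Theorem \ref{mainthm} provides $H^p(\mcg_n^{\cdot}(A)) = 0$ for every $p > \dim R$, and specializing to $p = n$ yields the claim.

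For part (ii), since $\dim R = 1$, part (i) handles $n \geq 2$ when $A$ is Henselian, but two extra points need to be addressed: the case $n = 1$, and the removal of the Henselian hypothesis. The first is classical and unconditional: every regular local ring is a UFD by Auslander--Buchsbaum, so $\ch^1(A) = \operatorname{Pic}(A) = 0$. For the second, with $n \geq 2$, I would invoke Popescu's theorem \cite{Popescu} to write $A = \varinjlim_\lambda A_\lambda$ as a filtered colimit of local rings essentially smooth over $R$, and apply Gillet--Levine \cite{GilletLevine} to each $A_\lambda$ to obtain $H^p(\mcg_n^{\cdot}(A_\lambda)) = 0$ for $p \geq 2$, hence $\ch^n(A_\lambda) = 0$. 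The conclusion would follow from $\ch^n(A) = \varinjlim_\lambda \ch^n(A_\lambda) = 0$.

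The main obstacle is justifying this last colimit interchange, since the transition maps in the Popescu system are generally not flat and Chow-theoretic pullback is not a priori defined along them. I would sidestep functoriality and argue at the level of generators and relations: every codimension-$n$ cycle on $\Spec A$ and every principal divisor witnessing a rational equivalence is specified by finite data, and so descends to some $A_\lambda$ (possibly after enlarging $\lambda$ so that the descended subvariety lies in the correct codimension). This identifies $\ch^n(A)$ with the filtered colimit of the $\ch^n(A_\lambda)$ as abelian groups, and vanishing at each finite level propagates. A cleaner alternative is to use the hypercohomology incarnation of $\ch^n$ developed in Section \ref{chow}, which behaves better under such limits.
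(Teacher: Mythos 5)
Part (i) of your proposal matches the paper exactly: Theorem \ref{mainthm} combined with the identification $\ch^n(A) = H^n(\mcg_n^{\cdot}(A))$ (Lemma \ref{gersten_chow}) gives the result immediately.

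For part (ii), the $n=1$ case via Auslander--Buchsbaum is fine and agrees with the paper, and you have correctly identified the two obstacles: the failure of Theorem \ref{mainthm} to cover $n = \dim R = 1$, and the Henselian hypothesis. However, your primary proposed route for $n \geq 2$ --- writing $A = \varinjlim A_\lambda$ via Popescu and arguing $\ch^n(A) = \varinjlim \ch^n(A_\lambda)$ by descending cycles and rational equivalences to finite stages --- has a genuine gap. The transition maps $A_\lambda \to A_{\lambda'}$ and $A_\lambda \to A$ are not flat, so codimension of primes is not preserved under contraction, and there is no guarantee that ``enlarging $\lambda$'' will fix the codimension of a descended integral subscheme; indeed, one of the central technical problems the paper is organized around (see the discussion preceding Section \ref{gersten_filter}) is precisely that the coniveau filtration is not functorial in the Popescu system. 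Without functoriality, $\left\{\ch^n(A_\lambda)\right\}$ is not even naturally a directed system, and the claimed identification $\ch^n(A) \cong \varinjlim \ch^n(A_\lambda)$ does not follow from finiteness of cycle data.

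The paper instead proves part (ii) by citing Proposition \ref{gl_improved}, which establishes a quasi-isomorphism of $\mcg_n^{\cdot}(A)$ with the two-term complex $K_n(A[\ovpi]) \to K_{n-1}(A/\pi A)$; this gives $H^p(\mcg_n^{\cdot}(A)) = 0$ for $p > 1$, hence $\ch^n(A) = 0$ for $n \geq 2$, with no Henselian hypothesis and no manipulation of Chow groups in a limit. The non-flatness obstacle is bypassed there by Panin's device of replacing Gersten cohomology of the (regular, equicharacteristic) fibres with Zariski sheaf cohomology of $\mck_n$, which does pass through non-flat filtered limits by Grothendieck's limit theorem. Your ``cleaner alternative'' --- using the hypercohomology incarnation from Section \ref{chow} --- is essentially gesturing at this, but note that Corollary \ref{chow_zar} is itself downstream of Proposition \ref{gl_improved}, so invoking it would be circular unless you prove Proposition \ref{gl_improved} (or an equivalent acyclicity statement) first. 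The correct fix is to cite Proposition \ref{gl_improved} directly rather than attempting to commute $\ch^n$ with the colimit.
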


\section{Background Material}
In the interest of self-containment, we review here some notions about the coniveau filtration, the Nisnevich topology, and the relationship between formal smoothness and geometric regularity. It is recommended that the reader skip this section and refer back to it only as needed.

\subsection{General $K$-Theory Conventions}For a Noetherian scheme $X$, we denote by $\mcm(X)$ the category of coherent $\mco_X$-modules and $\mathbf{Vect}(X) \subset \mcm(X)$ the full subcategory of locally-free sheaves. We shall write $K_n'(X) = K_n(\mcm(X))$ and $K_n(X) = K_n(\mathbf{Vect}(X))$. When $X$ is regular and separated over $\Spec \mathbb{Z}$, every $\mcf \in \mcm(X)$ has a finite resolution by locally-free sheaves; so by Quillen's resolution theorem \cite[IV.3]{Quillen}, the natural map $K_n(X) \to K'_n(X)$ is an isomorphism. We shall use $K_n(X)$ and $K'_n(X)$ interchangeably in this context.

\subsection{The Codimensional Filtration and the Gersten Complex}\label{coniveau} We briefly recall the construction of the Gersten Complex. Standard references for this material are \cite[VII.5]{Quillen}, \cite[V.9]{Kbook}, and \cite[Appx. C]{Srinivas}.

Fix a Noetherian scheme $X$ of finite Krull dimension. The category of coherent $\mco_{X}$-modules $\mcm(X)$ comes equipped with a natural filtration
\[ \mcm(X) = \mcm^0(X) \supset \mcm^1(X) \supset \mcm^2(X) \supset \cdots \supset \mcm^p(X) \supset \mcm^{p+1}(X) \supset \cdots \]
where $\mcm^p(X)$ is the full subcategory of $\mcm(X)$, consisting of modules supported in codimension \emph{at least} $p$. In other words, $\mcf \in \mcm^p(X)$ if and only if $\mcf_{x} = 0$ for all $x \in X$ such that $\dim \mco_{X,x} < p$. From this characterization, it readily follows that $\mcm^{p+1}(X)$ is a Serre subcategory of $\mcm^p(X)$. Thus, the quotient $\mcm^p(X)/\mcm^{p+1}(X)$ is abelian. Furthermore, there is a natural equivalence
\[ \mcm^p(X)/\mcm^{p+1}(X) \stackrel{\sim}{\longrightarrow} \coprod_{x \in X^p}\mcm^{\operatorname{fl}}(\mco_{X,x}) \]
where $X^p := \left\{x \in X: \dim \mco_{X,x} = p \right\}$ and $\mcm^{\operatorname{fl}}(\mco_{X,x})$ is the category of finite-length $\mco_{X,x}$ modules. By devissage, one then obtains
\[ K_n(\mcm^p(X)/\mcm^{p+1}(X)) \cong \bigoplus_{x \in X^p}K_n(k(x)) \mbox{ for all $p,n \geq 0$.} \]
Using Quillen's localization theorem, the canonical maps of $\mcm^{p+1}(X) \stackrel{\mathbf{i}}{\longrightarrow} \mcm^p(X) \stackrel{\mathbf{q}}{\longrightarrow} \mcm^p(X)/\mcm^{p+1}(X)$ give rise to long exact sequences
\begin{multline*}
 \cdots \stackrel{\partial}{\longrightarrow} K_n(\mcm^{p+1}(X)) \stackrel{\mathbf{i}_*}{\longrightarrow} K_n(\mcm^p(X)) \stackrel{\mathbf{q}_*}{\longrightarrow} \\
K_n(\mcm^p(X)/\mcm^{p+1}(X)) \stackrel{\partial}{\longrightarrow} K_{n-1}(\mcm^{p+1}(X)) \stackrel{\mathbf{i}_*}{\longrightarrow} \cdots
\end{multline*}
We can patch these localization sequences together, forming the exact couple $(D_1,E_1)$ where $\ds D_1^{pq} = K_{-p-q}(\mcm^p(R))$ and $E_1^{pq} = K_{-p-q}(\mcm^p(X)/\mcm^{p+1}(X))$. If $X$ has finite Krull-dimension, the filtration on $\mcm(X)$ is finite and the associated spectral sequence converges:

\begin{thm}[Coniveau Spectral Sequence]\label{BGQ}Let $X$ be a finite-dimensional Noetherian scheme. There is then a convergent fourth-quadrant spectral sequence
\[ E_1^{pq} = \bigoplus_{x \in X^p}K_{-p-q}(k(x)) \Rightarrow K'_{-p-q}(X). \]
\end{thm}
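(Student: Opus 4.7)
The plan is to assemble the spectral sequence from the standard exact couple construction applied to the localization long exact sequences written out just above the statement. Concretely, take the bigraded abelian groups
\[ D_1^{pq} = K_{-p-q}(\mcm^p(X)), \qquad E_1^{pq} = K_{-p-q}(\mcm^p(X)/\mcm^{p+1}(X)) \]
with structural morphisms $\mathbf{i}_*$, $\mathbf{q}_*$, and $\partial$ of bidegrees $(-1,+1)$, $(0,0)$, and $(+1,0)$ respectively. Exactness at each vertex of the couple is exactness of the localization sequence, so this is a genuine exact couple, and the standard machinery (see e.g.\ \cite[V.9]{Kbook}) produces a cohomological spectral sequence with differentials $d_r$ of bidegree $(r, 1-r)$.

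The identification of the $E_1$ page has essentially been carried out in the setup: devissage applied to the equivalence
\[ \mcm^p(X)/\mcm^{p+1}(X) \stackrel{\sim}{\longrightarrow} \coprod_{x \in X^p} \mcm^{\operatorname{fl}}(\mco_{X,x}) \]
yields $E_1^{pq} \cong \bigoplus_{x \in X^p} K_{-p-q}(k(x))$. Nothing beyond the fact that each $\mcm^{\operatorname{fl}}(\mco_{X,x})$ has simple objects indexed by the single residue field $k(x)$ is required here.

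For convergence I would invoke the finite-dimensionality hypothesis. If $d = \dim X$, then for any $\mcf \in \mcm^{d+1}(X)$ the condition that $\mcf$ be supported in codimension $> d$ forces $\mcf = 0$; hence $\mcm^p(X) = 0$ and $D_1^{pq} = 0$ for $p > d$. Combined with $\mcm^p(X) = \mcm(X)$ for $p \leq 0$, this makes the filtration finite in each total degree. The resulting spectral sequence therefore converges strongly to the associated graded of the induced filtration on $D_1^{0,-n} = K'_n(X)$, namely $F^p K'_n(X) = \mathrm{image}\bigl(K_n(\mcm^p(X)) \to K_n(\mcm(X))\bigr)$. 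The fourth-quadrant shape is forced by $E_1^{pq} = 0$ for $p < 0$ (the quotient $\mcm^p/\mcm^{p+1}$ is zero there) and by the vanishing of negative $K$-theory of an exact category in Quillen's framework, which kills $E_1^{pq}$ when $-p-q < 0$.

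The main obstacle is purely bookkeeping: verifying that the exact-couple filtration recovered on the abutment agrees with the natural codimension filtration, together with checking signs and bidegree conventions so that the differentials genuinely assemble into the Gersten complex at the $E_1$ stage. Both are standard but tedious; no conceptual difficulty is anticipated.
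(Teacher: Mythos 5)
Your proposal is correct and follows essentially the same approach that the paper itself sketches immediately before the theorem statement (the exact couple from the localization sequences, devissage for the $E_1$ page, and finiteness of the codimension filtration for convergence), which the paper then delegates to the standard references \cite[VII.5]{Quillen}, \cite[V.9]{Kbook}, \cite[Appx. C]{Srinivas}. Your convergence and fourth-quadrant arguments are the right ones and fill in the details the paper leaves implicit; no gap.
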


The $d_1$ differential makes each row of the $E_1$ page a cohomologically-graded complex, which we shall call the Gersten complex on $X$:
\begin{multline*} 
\mcg_n^{\cdot}(X) := \left(0 \to \bigoplus_{\mathclap{x \in X^0}}K_n(k(x)) \to \bigoplus_{\mathclap{x \in X^1}}K_{n-1}(k(x)) \right. \to \cdots \\ 
\left. \cdots \to \bigoplus_{x \in X^p}K_{n-p}(k(x)) \to \cdots \to \bigoplus_{\mathclap{x \in X^{n-1}}}K_1(k(x)) \to \bigoplus_{\mathclap{x \in X^n}}K_0(k(x)) \to 0 \right).
\end{multline*}

To be clear, our indexing starts at degree zero: $\ds \mcg_n^{p}(X) = \bigoplus_{\mathclap{x \in X^p}}K_{n-p}(k(x))$. We also remark that when $X = \Spec A$ is affine, we will write $\mcg_n^{\cdot}(A)$ for $\mcg_n^{\cdot}(X)$ and vice versa.

It is well-known that if $f:X \to Y$ is a flat morphism of Noetherian schemes, then $f^{*}(\mcm^p(Y)) \subset \mcm^p(X)$ \cite[VII.5.2]{Quillen}, thereby inducing a morphism from the coniveau spectral sequence on $Y$ to that on $X$. Proper pushfoward fails to respect the filtration (take $p: \mathbb{P}^2_{\mathbb{C}} \to \Spec \mathbb{C}$ and consider the structure sheaf of a hyperplane). Finite pushforwards, on the other hand, are well-behaved:

\begin{lem}\label{finite_coniveau}Let $g:X \to Y$ be a finite morphism of Noetherian schemes. Then for all $p \geq 0$, $g_{*}(\mcm^p(X)) \subset \mcm^p(Y)$.
\end{lem}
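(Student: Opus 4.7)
The plan is to reduce to the affine case and then combine two standard facts: that $M_{\fq} = 0$ in a finite extension $A \to B$ can be detected at the primes of $B$ lying over $\fq$, and that heights of primes decrease under integral extensions.

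First, since $g$ is affine and the conclusion $g_*\mcf \in \mcm^p(Y)$ may be checked locally on $Y$, I would reduce to the case $g: \Spec B \to \Spec A$ with $A \to B$ module-finite. A coherent sheaf $\mcf \in \mcm^p(X)$ then corresponds to a finitely-generated $B$-module $M$ satisfying $M_{\fp} = 0$ whenever $\hgt \fp < p$, and $g_*\mcf$ corresponds to $M$ viewed as an $A$-module. Given $\fq \in \Spec A$ with $(g_*\mcf)_{\fq} = M_{\fq} \neq 0$, I must show $\hgt \fq \geq p$.

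Next, I would establish the detection step: $M_{\fq} \neq 0$ if and only if $M_{\fp} \neq 0$ for some $\fp \in \Spec B$ lying over $\fq$. Indeed, $M_{\fq} = M \otimes_B B_{\fq}$ where $B_{\fq} := B \otimes_A A_{\fq}$ is a module-finite, hence semi-local, $A_{\fq}$-algebra whose maximal ideals are exactly the extensions of the primes $\fp \subset B$ with $\fp \cap A = \fq$. Since $M_{\fq}$ is finitely generated over the semi-local ring $B_{\fq}$, Nakayama forces $M_{\fq} = 0$ as soon as $(M_{\fq})_{\fp B_{\fq}} = M_{\fp}$ vanishes for every such $\fp$. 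So under the assumption $M_{\fq} \neq 0$, there exists $\fp \in g^{-1}(\fq)$ with $\mcf_{\fp} \neq 0$, and hence $\hgt \fp \geq p$ by hypothesis on $\mcf$.

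Finally, I would invoke the incomparability theorem for integral extensions. Given any strict chain $\fp_0 \subsetneq \fp_1 \subsetneq \cdots \subsetneq \fp_n = \fp$ in $B$, contraction to $A$ yields a chain $\fp_0 \cap A \subsetneq \cdots \subsetneq \fp_n \cap A = \fq$, strict because comparable primes of $B$ in an integral extension cannot share a contraction. Thus $\hgt \fq \geq \hgt \fp \geq p$, and so $g_*\mcf \in \mcm^p(Y)$. The argument is essentially bookkeeping; the one point that must not be glossed over is the use of semi-locality of $B_{\fq}$ to convert non-vanishing of $M_{\fq}$ into non-vanishing of some $M_{\fp}$ with $\fp$ actually lying over $\fq$, since this is what allows the integral-extension height comparison to be brought to bear.
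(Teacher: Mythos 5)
Your proof is correct and follows essentially the same route as the paper's: reduce to the affine case, detect nonvanishing of $M$ at a prime of $B$, and then compare heights across the finite extension. The only cosmetic difference is that you locate a prime of $B$ lying exactly over $\fq$ via semi-locality and Nakayama and then invoke incomparability, whereas the paper instead produces a prime of $B\otimes_A A_\fp$ and uses the packaged dimension inequality $\dim(B\otimes_A A_\fp)\le\dim A_\fp$; these are the same underlying fact about integral extensions.
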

\begin{proof}The question is clearly local on the base, and since $g$ is affine, we reduce to the case where $X = \Spec B$ and $Y = \Spec A$. Let $M \in \mcm^p(B)$ be a finitely-generated $B$-module. Fix $\fp \in \Spec A$ such that when $M$ is regarded as an $A$-module $M_{\fp} \neq 0$. We need to show that $\dim A_{\fp} \geq p$. Note that $A_{\fp} \to B \otimes_A A_{\fp}$ is finite, so $\dim(B \otimes_A A_{\fp}) \leq \dim A_{\fp}$. On the other hand, $0 \neq M_{\fp} = M \otimes_A A_{\fp} = M \otimes_B (B \otimes_A A_{\fp})$, so for some $\fq \in \Spec (B \otimes_A A_{\fp}) \subset \Spec B$, $M_{\fq} \neq 0$. Thus,
\[ \dim A_{\fp} \geq \dim (B \otimes_A A_{\fp}) \geq \dim B_{\fq} \geq p. \]
\end{proof}

\subsection{The Nisnevich Topology}\label{nisnevich} For a Noetherian scheme $Y$, we say that a collection of \'{e}tale morphisms $p_i: U_i \to Y$ is a Nisnevich covering of $Y$ if for every $y \in Y$, the canonical inclusion $j_y: \left\{y\right\}= \Spec k(y) \to Y$ factors through some $p_i$. For $X$ Noetherian, we define the Nisnevich site $X_{\Nis}$ as the collection of finite-type \'{e}tale maps $Y \to X$ with their Nisnevich coverings. See \cite[\S 12]{Mazza} or \cite{Nisnevich} for further details.

For each $n \geq 0$, $K_n(-)$ may be regarded as a presheaf on $X_{\Nis}$ where the restriction maps are given by the flat pullback. We shall denote by $\mck^{\Nis}_{n,X}$, $\mck^{\Zar}_{n,X}$ the respective Nisnevich and Zariski sheafifications, but we shall sometimes suppress the subscripts and superscripts when the context is clear.

For $X$ Noetherian, we can define the Gersten complex of sheaves on $X$ via
\begin{multline*}
\dumcg_{n,X}^{\cdot} = \left(0 \to \bigoplus_{x \in X^0}(j_x)_*\left(\mck^{\Nis}_{n,\left\{x\right\}}\right) \to \bigoplus_{x \in X^1}(j_x)_*\left(\mck^{\Nis}_{n-1,\left\{x\right\}}\right) \to \cdots \right.\\
\left. \cdots \to \bigoplus_{x \in X^{n-1}}(j_x)_*\left(\mck^{\Nis}_{1,\left\{x\right\}}\right) \to \bigoplus_{x \in X^n}(j_x)_*\left(\mck^{\Nis}_{0,\left\{x\right\}}\right) \to 0 \right)
\end{multline*}
so that the global sections $\Gamma(X,\dumcg^{\cdot}_{n,X})$ simply recover the usual Gersten complex $\mcg_n^{\cdot}(X)$ for either topology.

\begin{lem}\label{hypercohomology}The hypercohomology of $\dumcg^{\cdot}_{n,X}$ may be identified with the usual cohomology of the Gersten complex:
\begin{enumerate}
\item[(i)] For all $p \geq 0$, $\mathbb{H}^p_{\Zar}(X,\dumcg_{n,X}) \cong H^p(\mcg_n(X))$.
\item[(ii)] For all $p \geq 0$, $\mathbb{H}^p_{\Nis}(X,\dumcg_{n,X}) \cong H^p(\mcg_n(X))$.
\end{enumerate}
\end{lem}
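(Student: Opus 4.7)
The plan is to apply the hypercohomology spectral sequence
\[ E_2^{p,q} = H^q_{\tau}(X, \dumcg_{n,X}^{p}) \Longrightarrow \mathbb{H}^{p+q}_{\tau}(X, \dumcg_{n,X}) \]
for $\tau \in \{\Zar, \Nis\}$, and to reduce both claims to showing that each term
\[ \dumcg_{n,X}^{p} = \bigoplus_{x \in X^p} (j_x)_* \mck^{\tau}_{n-p,\left\{x\right\}} \]
is $\tau$-acyclic in positive degrees. Since $\Gamma(X, \dumcg_{n,X}) = \mcg_n(X)$ by construction, such acyclicity collapses the spectral sequence onto the row $q = 0$ and produces the desired identification with $H^p(\mcg_n(X))$.

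For the Zariski part (i), I would verify flasqueness directly: for any sheaf $\mcf$ on $(\Spec k(x))_{\Zar}$, the sections of $(j_x)_* \mcf$ over an open $U \subset X$ are $\mcf(\Spec k(x))$ when $x \in U$ and $0$ otherwise, so every restriction map is surjective. Direct sums of flasque sheaves on the Noetherian space $X$ remain flasque, whence each $\dumcg_{n,X}^{p}$ is flasque and Zariski-acyclic.

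For the Nisnevich part (ii), I would factor $j_x \colon \Spec k(x) \to X$ as the generic-point inclusion $u \colon \Spec k(x) \to \overline{\{x\}}$ followed by the reduced closed immersion $i \colon \overline{\{x\}} \to X$. Closed immersions are cohomologically trivial in the Nisnevich topology, so it suffices to prove $R^q u_* = 0$ for $q > 0$. Computing at a Nisnevich stalk over $z \in \overline{\{x\}}$ with Henselization $\mathcal{O}_{\overline{\{x\}}, z}^h$ (an integral domain, since $\overline{\{x\}}$ is integral and Henselization preserves domains), flat base change yields
\[ (R^q u_* \mcf)_z \cong H^q_{\Nis}\bigl(\Spec \mathrm{Frac}(\mathcal{O}_{\overline{\{x\}}, z}^h), \mcf\bigr). \]
Since the Nisnevich cohomology of any field spectrum vanishes in positive degrees, this is zero for $q > 0$. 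Leray's spectral sequence for $j_x$ then forces $H^p_{\Nis}(X, (j_x)_*\mcf) \cong H^p_{\Nis}(\Spec k(x), \mcf)$, which likewise vanishes for $p > 0$. Finite Nisnevich cohomological dimension on $X$ (bounded by $\dim X$) lets this acyclicity persist through the infinite direct sums appearing in $\dumcg_{n,X}^{p}$.

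The most delicate step is the Nisnevich stalk calculation identifying the fiber of $u$ over a Henselian local point with a fraction field, together with the verification that direct sums remain acyclic. Once these are in place, both (i) and (ii) follow uniformly from the collapse of the hypercohomology spectral sequence.
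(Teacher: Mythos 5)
Your overall strategy --- reduce to term-wise acyclicity of $\dumcg_{n,X}^p$ and collapse the hypercohomology spectral sequence --- is exactly the paper's. For the Zariski part (i), your flasqueness argument is a genuinely cleaner route than the paper's (which just remarks that (i) is ``virtually identical'' to the Leray argument for (ii)): the description of $(j_x)_*\mcf$ as the sheaf taking the fixed value $\mcf(\Spec k(x))$ on opens containing $x$ and $0$ elsewhere is correct, and flasqueness of such sheaves and of their direct sums over the Noetherian space $X$ is immediate. This buys you a more self-contained proof of (i) that does not lean on the Nisnevich machinery.

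The Nisnevich part has a genuine gap. You factor $j_x$ through $\overline{\{x\}}\hookrightarrow X$ and assert that $\mco_{\overline{\{x\}},z}^h$ is a domain because ``Henselization preserves domains.'' That is false: the Henselization of the local ring at the node of a nodal curve (for instance $k[s,t]_{(s,t)}/(t^2-s^2(s+1))$) is not a domain, as the two analytic branches separate. So one cannot identify the stalk of $R^qu_*\mcf$ with $H^q_{\Nis}$ of a single fraction-field spectrum. The correct statement --- which the paper records separately as Lemma \ref{stupid_hensel} --- is that $\Spec k(x)\times_X \Spec\mco_{X,z}^h$ is a \emph{finite disjoint union} of spectra of fields, each finite separable over the relevant residue field. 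That is still $0$-dimensional, so the vanishing of $R^q u_*$ (and hence your conclusion) survives, but your stated justification does not. Note also that the paper sidesteps this issue entirely by not factoring through the closure: it shows $R^p(j_x)_*=0$ directly, since for any étale $U\to X$ the presheaf value $H^p_{\Nis}(U\times_X\Spec k(x),\mcf)$ already vanishes for $p>0$ because $U\times_X\Spec k(x)$ is étale over $\Spec k(x)$ and therefore $0$-dimensional; that is a shorter and safer path, and you should repair your argument either by adopting it or by replacing the fraction-field claim with the disjoint-union-of-fields computation.
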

\begin{proof}We shall only prove (ii) as the proof of (i) is virtually identical. It suffices to show that each term $\ds \bigoplus_{x \in X^{m}}(j_x)_*\left(\mck^{\Nis}_{n-m,\left\{x\right\}}\right)$ has no higher Nisnevich cohomology. Fix $x \in X^m$ and $j_x: \left\{x\right\}=\Spec k(x) \to X$. For any \'{e}tale $U \to \Spec k(x)$, $H^p_{\Nis}(U,\mcf) = 0$ for all sheaves $\mcf$ on $(\Spec k(x))_{\Nis}$ and all $p > \dim U = 0$ by \cite[12.2]{Mazza}. Thus, $R^p(j_x)_* = 0$ for $p > 0$ by \cite[III.1.13]{Milne}. From the Leray spectral sequence, we have
\[ E_2^{pq} = H^p_{\Nis}\left(X,R^p(j_x)_*\left(\mck^{\Nis}_{n-m,\left\{x\right\}}\right)\right) \Rightarrow H^{p+q}_{\Nis}\left(\Spec k(x),\mck^{\Nis}_{n-m,\left\{x\right\}}\right) \]
which degenerates to give isomorphisms
\[ H^p_{\Nis}\left(X,(j_x)_*\left(\mck^{\Nis}_{n-m,\left\{x\right\}}\right)\right) \cong H^{p}_{\Nis}\left(\Spec k(x),\mck^{\Nis}_{n-m,\left\{x\right\}}\right) = 0 \mbox{ for $p > 0$.} \]
Since $X$ is Noetherian, cohomology commutes with direct sums \cite[I.3.11.1]{Tamme}, and thus, each term of $\dumcg_{n,X}$ is acyclic for $\Gamma_{\Nis}(X,-)$.
The claim now follows since the hypercohomology spectral sequence
\[ E_2^{pq} = H^p\left(H^q_{\Nis}\left(\dumcg_{n,X}^{\cdot}\right)\right) \Rightarrow \mathbb{H}^{p+q}_{\Nis}\left(X,\dumcg_{n,X}^{\cdot}\right) \]
degenerates to give the desired isomorphism.
\end{proof}

We now show that the stalks of $\dumcg_{n,X}$ allow us to recover $\mcg(\mco_{X,x})$ or $\mcg(\mco_{X,x}^h)$ depending on whether we are working in the Zariski or Nisnevich topologies.

\begin{lem}\label{stupid_hensel}Let $X$ be a Noetherian scheme and let $x,x' \in X$. If $x$ lies in the Zariski closure of $\left\{x'\right\}$, then $\Spec k(x') \times_X \Spec \mco_{X,x}^h = \coprod_{i=1}^m \Spec k(y_i)$ where $y_i$ are precisely the points of $\Spec \mco_{X,x}^h$ lying over $x'$.
\end{lem}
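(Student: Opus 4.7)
My plan is to translate the geometric statement into an algebraic computation of the Henselian fiber ring. Setting $R = \mco_{X,x}$ and letting $\fp' \subset R$ denote the prime corresponding to $x'$ (which lies in $\fm_R$ precisely because $x$ is in the closure of $\{x'\}$), I observe that both $\Spec \mco_{X,x}^h \to X$ and $\Spec k(x') \to X$ factor through $\Spec R$, so the fibered product becomes $\Spec(R^h \otimes_R k(\fp'))$. Its points are in bijection with primes of $R^h$ lying over $\fp'$, so these are exactly the $y_i$ in the statement. It therefore suffices to prove the ring isomorphism $R^h \otimes_R k(\fp') \cong \prod_{i=1}^m k(y_i)$.

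The first technical step will be to establish the compatibility $R^h/\fp' R^h \cong (R/\fp')^h$. This follows from two applications of the universal property of Henselization: on one hand, $R^h/\fp' R^h$ is Henselian local with residue field $k(x)$ and receives a map from $R/\fp'$, producing a canonical map $(R/\fp')^h \to R^h/\fp' R^h$; on the other hand, the composite $R \to R/\fp' \to (R/\fp')^h$ lifts uniquely to a map $R^h \to (R/\fp')^h$ that kills $\fp' R^h$ and hence factors through $R^h/\fp' R^h$. Writing $\bar R = R/\fp'$, this rewrites the fiber ring as $\bar R^h \otimes_{\bar R} \operatorname{Frac}(\bar R)$, i.e., the localization of $\bar R^h$ at $\bar R \setminus \{0\}$.

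Next, I would verify that this localization is Noetherian, reduced, and zero-dimensional, which together force it to be a finite product of residue fields by the structure theorem for Artinian rings. Noetherianness is inherited from $R^h$ (itself Noetherian since $R$ is). Reducedness holds because $\bar R$ is a domain and we can write $\bar R^h = \varinjlim_\alpha A_\alpha/\fp' A_\alpha$ as a filtered colimit of local essentially étale $\bar R$-algebras, each of which is reduced as an étale extension of a reduced ring. Zero-dimensionality is the main technical point I expect to be the principal obstacle: each $A_\alpha \otimes_R k(\fp')$ is étale over $k(\fp')$ and therefore a finite product of finite separable field extensions, hence zero-dimensional, and a strict chain of primes in $R^h \otimes_R k(\fp') = \varinjlim_\alpha A_\alpha \otimes_R k(\fp')$ would necessarily descend to a strict chain at some finite stage, contradicting zero-dimensionality there.

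With these three properties in hand, the fiber decomposes as a product of the residue fields at its finitely many (Artinian, hence maximal) primes. Tracing through the identifications, these primes are exactly the $y_1, \ldots, y_m$ of $R^h$ lying over $\fp'$, and since $\bar R^h$ is reduced Noetherian, the localization at each such minimal prime yields the residue field $k(y_i)$, completing the identification $R^h \otimes_R k(\fp') \cong \prod_{i=1}^m k(y_i)$.
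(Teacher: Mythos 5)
Your argument is correct and follows the same overall skeleton as the paper's: reduce to the affine local ring $R = \mco_{X,x}$, identify the fibre product as $\Spec(R^h \otimes_R k(\fp'))$, use $R^h/\fp' R^h \cong (R/\fp')^h$ to pass to a Noetherian local domain $\bar R$ and its fraction field $E$, and then show the resulting ring is a finite product of finite separable extensions of $E$ by establishing that it is Noetherian, zero-dimensional, and reduced. Where you diverge is in the last two verifications. For zero-dimensionality, you argue that a strict chain of primes in the filtered colimit would descend to some finite \'{e}tale stage and contradict zero-dimensionality there; the paper instead notes that $E \to E \otimes_{\bar R} \bar R^h$ is a filtered colimit of finite extensions, hence integral over $E$, which forces dimension zero at once. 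For reducedness, you pass the property through the filtered colimit of local essentially \'{e}tale $\bar R$-algebras (each reduced since \'{e}tale over the domain $\bar R$), whereas the paper appeals to the geometric regularity of $R \to R^h$ (Example \ref{geomregexmps}(a)) to conclude that the fibre over $E$ is regular and hence reduced. Both routes are sound; yours is more hands-on and self-contained, while the paper's is terser because it reuses the formal-smoothness machinery already in place. One small economy you could adopt from the paper: you spend a full paragraph proving $R^h/\fp' R^h \cong (R/\fp')^h$ via the universal property, whereas this is a standard fact about Henselization that can be cited directly.
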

\begin{proof}First, note that $\Spec k(x') \times_X \Spec \mco_{X,x}^h$ is faithfully flat over $\Spec k(x') \times_X \Spec \mco_{X,x}$, and the latter is empty if and only if $x \notin \overline{\left\{x'\right\}}$. We henceforth assume that $\Spec k(x') \to X$ factors through $\Spec \mco_{X,x}$, and so, $\Spec k(x') \times_X \Spec \mco_{X,x}^h = \Spec k(x') \times_{\Spec \mco_{X,x}} \Spec \mco_{X,x}^h$. 
If we let $A = \mco_{X,x}$, $x'$ corresponds to a prime ideal $\fp \subset A$, so $k(x')$ is just the fraction field $E$ of $A/\fp$. Since $A/\fp \otimes_A A^h = (A/\fp)^h$, we are reduced to proving that if $A$ is a Noetherian local domain with fraction field $E$, then $E \otimes_A A^h$ is a finite product of fields $L_i$, each of which is separable algebraic over $E$. We can write $A^h = \fcolim B_{\lambda}$ where each $B_{\lambda}$ is \'{e}tale over $A$. Thus, each $E \otimes_A B_{\lambda}$ is finite over $E$ (since \'{e}tale morphisms are quasifinite). Thus $E \to E \otimes_A A^h = \fcolim (E \otimes_A B_{\lambda})$ is an integral extension of $E$ and so must be zero-dimensional. On the other hand, $E \otimes_A A^h$ is Noetherian, being a localization of a $A^h$, meaning that $E \otimes_A A^h$ is a product of Artin local rings. Since $E \to E \otimes_A A^h$ must also be geometrically regular (Example \ref{geomregexmps}(a)), the rings are reduced, and the claim follows.
\end{proof}

\begin{lem}\label{complex_stalk}Let $X$ be a Noetherian scheme and consider the Gersten complex $\dumcg_{n,X}^{\cdot}$ on $X_{\Nis}$. Fix $x \in X$.
\begin{enumerate}
\item[(i)] In the Zariski topology, the stalk $\left(\dumcg_{n,X}^{\cdot}\right)_x$ is $\mcg_n^{\cdot}(\mco_{X,x})$, the Gersten complex on $\mco_{X,x}$.
\item[(ii)] In the Nisnevich topology, the stalk $\left(\dumcg_{n,X}^{\cdot}\right)_x$ is $\mcg_{n}^{\cdot}(\mco_{X,x}^h)$, the Gersten complex on $\mco_{X,x}^h$.
\end{enumerate}
\end{lem}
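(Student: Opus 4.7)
My plan is to verify the statement degree-by-degree: compute the stalk of each summand $(j_y)_*\bigl(\mck^{?}_{n-p,\{y\}}\bigr)$ at $x$, reindex the resulting direct sum to match the Gersten terms of $\mco_{X,x}$ (Zariski) or $\mco_{X,x}^h$ (Nisnevich), and then transport the differentials from $X$ down to the local ring by functoriality of the coniveau spectral sequence along the flat map $\pi:\Spec\mco_{X,x}^{h}\to X$ (resp.\ $\Spec\mco_{X,x}\to X$).

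For (i), any Zariski neighborhood of $x$ is an open $U\ni x$, and $\{y\}=\Spec k(y)$ either lies in or is disjoint from $U$; so the stalk of $(j_y)_*\bigl(\mck^{\Zar}_{n-p,\{y\}}\bigr)$ at $x$ is $K_{n-p}(k(y))$ when $x\in\overline{\{y\}}$ and $0$ otherwise. Since $\{y\in X^p : x\in\overline{\{y\}}\}$ is precisely the set of codimension-$p$ primes of $\mco_{X,x}$, with unchanged residue fields, summing identifies $(\dumcg_{n,X}^p)_x$ with $\mcg_n^p(\mco_{X,x})$.

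For (ii), the Nisnevich stalk at $x$ is computed by evaluation on $\Spec\mco_{X,x}^h$, so $\bigl((j_y)_*\mck^{\Nis}_{n-p,\{y\}}\bigr)_x = \mck^{\Nis}_{n-p,\{y\}}\bigl(\Spec k(y)\times_X\Spec\mco_{X,x}^h\bigr)$. By Lemma \ref{stupid_hensel} this fibre product is $\coprod_i\Spec k(z_i)$, the $z_i$ ranging over points of $\Spec\mco_{X,x}^h$ above $y$; since Nisnevich covers of a field admit sections, $\mck^{\Nis}_{n-p}$ on a disjoint union of field spectra is just the direct sum of the corresponding $K$-groups, giving $\bigoplus_i K_{n-p}(k(z_i))$. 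Because $\pi$ is pro-\'{e}tale, the dimension formula for flat local maps with zero-dimensional, geometrically reduced fibres yields $\hgt\fq=\hgt(\fq\cap\mco_{X,x})$ for every prime $\fq\subset\mco_{X,x}^h$, so I can reindex the double sum over $y$ and $z_i$ as a single sum over codimension-$p$ primes of $\mco_{X,x}^h$, yielding $\mcg_n^p(\mco_{X,x}^h)$ term-wise.

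The main obstacle, I anticipate, is not the stalk computation itself but matching the stalk differentials with those of the Gersten complex of the local ring. Rather than unpack the boundary maps in the underlying localization sequences, I would invoke functoriality of the coniveau spectral sequence along $\pi$ (cf.\ \cite[VII.5.2]{Quillen}): this induces a morphism of Gersten complexes $\mcg_n^\cdot(X)\to\mcg_n^\cdot(\mco_{X,x})$ (resp.\ $\mcg_n^\cdot(\mco_{X,x}^h)$) which, after sheafifying on $X_{\Zar}$ (resp.\ $X_{\Nis}$) and passing to the stalk at $x$, realizes the term-wise identifications above as an isomorphism of complexes with the correct differentials.
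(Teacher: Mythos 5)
Your proposal is correct and follows essentially the same route as the paper: both compute the stalk of each term via the pushforward-stalk/continuity argument (Milne's \cite[II.3.2(b)]{Milne} plus Quillen's \cite[VII.2.2]{Quillen}), both use Lemma \ref{stupid_hensel} to identify $\Spec k(y)\times_X\Spec\mco_{X,x}^h$ as a disjoint union of field spectra, and both rely on $K_n$ already being a sheaf on $(\Spec E)_{\Nis}$. You are somewhat more explicit than the paper about matching the differentials via flat pullback of the coniveau filtration along $\Spec\mco_{X,x}^{(h)}\to X$; the paper leaves this implicit, so this is a small strengthening rather than a departure.
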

\begin{proof}We begin by noting that for any field $E$, the presheaf $U \mapsto K_n(U)$ is already a sheaf on $(\Spec E)_{\Nis}$. This simply owes to the fact that every object $U$ of$(\Spec E)_{\Nis}$ is a finite disjoint union $\coprod \Spec L_i$ with each $L_i/E$ finite and separable and the fact that for any Nisnevich cover $\left\{V_i \to U\right\}$ each $\Spec L_i$ will appear as a connected component of $V_j$ for some $j$.

We shall only prove (ii) since the argument (i) follows along the same lines and, at any rate, already appears in the proof of \cite[VII.5.8]{Quillen}. Fix $x' \in X^p$. We shall analyze the Nisnevich stalk of each $(i_{x'})_*(\mck_{q,\left\{x'\right\}}^{\Nis})$ at $x$. Given $x \in X$, \cite[II.3.2(b)]{Milne} shows that
\[ \left((i_{x'})_*\left(\mck_{q,\left\{x'\right\}}^{\Nis}\right)\right)_x = \varinjlim_{U}\mck_{q,\left\{x'\right\}}^{\Nis}(\Spec k(x') \times_X U) = \varinjlim_{U}K_q(\Spec k(x') \times_X U) \]
where the limit is taken over all \'{e}tale $U \to X$ through which $\Spec k(x) \to X$ factors. By \cite[VII.2.2]{Quillen} the rightmost object is just $K_q(\Spec k(x') \times_X \Spec(\mco_{X,x}^h))$. But by Lemma \ref{stupid_hensel}, $\Spec k(x') \times_X \Spec \mco_{X,x}^h = \coprod_{i=1}^m \Spec k(y_i)$ where $y_i \in \Spec \mco_{X,x}^h$ are the points (necessarily of codimension $p$) lying over $x'$. Since taking stalks commutes with direct sums we get that $\left(\dumcg_{n,X}^{\cdot}\right)_x = \mcg_n^{\cdot}(\mco_{X,x}^h)$.
\end{proof}
 
\subsection{Local Flatness Criteria} For the sake of convenience, we shall state here two corollaries of the local flatness criterion that we shall need to use repeatedly.
\begin{lem}\label{localflatness1}\cite[Thm. 22.5+Cor.]{Matsumura} Let $(R,\fm_R,k) \to (S,\fm_S,k')$ be a local homomorphism of Noetherian local rings. Let $x_1, \cdots, x_n \in \fm_S$. The following are equivalent:
\begin{enumerate}
\item[(a)] $x_1, \cdots, x_n$ is a regular sequence on $S$ and $R \to S/(x_1, \cdots x_n)$ is flat.
\item[(b)] $x_1, \cdots, x_n$ is a regular sequence on $S \otimes_R k$ and $R \to S$ is flat.
\end{enumerate}
\end{lem}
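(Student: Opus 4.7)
The statement is essentially just a multi-variable packaging of the classical local flatness criterion (the $n=1$ case), which is Matsumura Theorem 22.5 itself. So the plan is to take $n=1$ as a black box and bootstrap to arbitrary $n$ by induction.

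For the base case $n=1$, nothing is to be done: the equivalence of ``$x$ is $S$-regular and $S/xS$ is $R$-flat'' with ``$\bar x$ is regular on $S \otimes_R k$ and $S$ is $R$-flat'' is exactly the statement cited.

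For the inductive step, write $S_i = S/(x_1,\ldots,x_i)$, so $S_0 = S$ and $S_n = S/(x_1,\ldots,x_n)$, and note that $S_i \otimes_R k = (S \otimes_R k)/(\bar x_1, \ldots, \bar x_i)$. To prove (a) $\Rightarrow$ (b), I would argue by descending induction on $i$ that $R \to S_i$ is flat and that $\bar x_{i+1}$ is a nonzerodivisor on $S_i \otimes_R k$. The case $i=n$ is immediate from (a). For the inductive step, apply the $n=1$ criterion to $R \to S_{i-1}$ with element $x_i$: since $x_i$ is $S_{i-1}$-regular (part of the hypothesis that $x_1,\ldots,x_n$ is $S$-regular) and since $S_{i-1}/x_i S_{i-1} = S_i$ is $R$-flat by the outer induction, the criterion yields both that $R \to S_{i-1}$ is flat and that $\bar x_i$ is regular on $S_{i-1} \otimes_R k$. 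Chaining these last conclusions for $i = n, n-1, \ldots, 1$ produces the full statement that $\bar x_1, \ldots, \bar x_n$ is a regular sequence on $S \otimes_R k$. For (b) $\Rightarrow$ (a), the same argument runs in the opposite direction: by ascending induction on $i$, once $R \to S_{i-1}$ is flat and $\bar x_i$ is regular on $S_{i-1} \otimes_R k$, the $n=1$ criterion gives that $x_i$ is $S_{i-1}$-regular and $R \to S_i$ is flat.

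There is no real obstacle; the only thing one must verify carefully at each inductive step is that the residual regularity hypothesis on $S \otimes_R k$ specializes correctly, and this is automatic from the identity $S_i \otimes_R k = (S \otimes_R k)/(\bar x_1,\ldots,\bar x_i)$. In fact, because the result is a direct consequence of the cited theorem plus an essentially bookkeeping induction, I would expect the paper either to omit the proof entirely in favor of the citation or to compress the whole argument into a single sentence pointing out the descending/ascending induction just described.
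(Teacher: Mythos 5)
Your proof is correct, and the paper in fact offers no proof at all — it simply cites Matsumura's Theorem 22.5 and its corollary, exactly as you anticipated at the end of your proposal. Your descending/ascending induction on $i$, reducing each step to the $n=1$ local flatness criterion applied to $R \to S_{i-1}$ with the element $x_i$, is sound and is essentially how the multi-variable corollary is derived in the cited source.
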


\begin{lem}\label{localflatness2}\cite[7.4]{Swan} Let $R \to S \to T$ be maps of Noetherian rings. Let $I \subset R$ be an ideal such that $IS$ is contained in the Jacobson radical of $T$. If $R \to T$ and $S/IS \to T/IT$ are flat then so is $S \to T$. 
\end{lem}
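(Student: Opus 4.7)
The strategy is to invoke the local flatness criterion (see, e.g., \cite[\S 22]{Matsumura}) for the ring map $S \to T$ with respect to the ideal $J := IS \subset S$: this criterion says that $T$ is $S$-flat provided (a) $T$ is $J$-adically ideally separated as an $S$-module, (b) $T/JT$ is flat over $S/J$, and (c) $\Tor_1^S(S/J, T) = 0$. Since $JT = IT$ and $S/J = S/IS$, condition (b) is exactly the second flatness hypothesis of the lemma. Condition (a) is essentially free: for any finitely generated ideal $\mathfrak{a} \subset S$, the tensor product $\mathfrak{a} \otimes_S T$ is a finitely generated $T$-module, and since $J$ lies in the Jacobson radical of $T$, Krull's intersection theorem gives $J$-adic separatedness.

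The substantive step is (c). Tensoring the sequence $0 \to IS \to S \to S/IS \to 0$ with $T$ over $S$ identifies $\Tor_1^S(S/IS, T)$ with the kernel of the multiplication map $\mu : IS \otimes_S T \to T$, so it suffices to check that $\mu$ is injective. I would do so by factoring $\mu$ through the canonical surjection $I \otimes_R S \twoheadrightarrow IS$ (tensored with $T$ over $S$):
\[ I \otimes_R T \;\cong\; I \otimes_R S \otimes_S T \;\twoheadrightarrow\; IS \otimes_S T \;\xrightarrow{\mu}\; T. \]
The total composite is the natural map $I \otimes_R T \to T$ obtained by applying $- \otimes_R T$ to $I \hookrightarrow R$, which is injective since $T$ is $R$-flat. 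A surjection followed by a map with injective total composite is forced to be an isomorphism, so $\mu$ itself is injective, giving (c).

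The main obstacle is the bridge from flatness over $R$ to Tor-vanishing over $S$: the decisive observation is the identification $I \otimes_R T = I \otimes_R S \otimes_S T$, which reconciles the two natural factorizations of the multiplication $I \to T$. A more abstract alternative would be the change-of-rings spectral sequence
\[ E_2^{p,q} = \Tor_p^S\!\left(\Tor_q^R(R/I, S), T\right) \;\Longrightarrow\; \Tor_{p+q}^R(R/I, T), \]
whose abutment vanishes in positive total degree by $R$-flatness of $T$; since the differentials into and out of position $(1,0)$ originate from or land in bi-degrees with a negative index, one concludes $\Tor_1^S(S/IS, T) = E_\infty^{1,0} = 0$.
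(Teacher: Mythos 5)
The paper does not actually prove this lemma --- it is quoted verbatim from Swan \cite[7.4]{Swan} and used as a black box --- so there is no ``paper's own proof'' to compare against. Your proof, however, is correct, and it is the standard argument (Swan's own proof is of the same flavor, as is the treatment of such statements in EGA~IV and Matsumura).

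Both of your routes to the key vanishing $\Tor_1^S(S/IS,T)=0$ are sound. The direct one cleanly exploits the canonical identification $I\otimes_R S\otimes_S T\cong I\otimes_R T$ to transport injectivity from the $R$-flatness of $T$ across the surjection $I\otimes_R S\twoheadrightarrow IS$; the resulting ``surjection composed to an injection forces isomorphism'' step is exactly right. The change-of-rings spectral sequence gives the same vanishing in one line once one observes that the relevant differentials in and out of bidegree $(1,0)$ involve a negative index. One small point of hygiene: the separatedness check should refer to $IT$ (equivalently $(IS)T$) lying in the Jacobson radical of $T$ rather than to $IS$ itself, since $IS$ is an ideal of $S$; this is the same mild abuse of language already present in the lemma's statement and does not affect the argument, as the $IS$-adic and $IT$-adic topologies on $\mathfrak{a}\otimes_S T$ coincide. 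With that understood, Artin--Rees together with Nakayama (the generalized Krull intersection theorem for ideals in the Jacobson radical) indeed gives the required ideally-separated hypothesis for the local criterion.
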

\subsection{Smoothness, Formal-Smoothness, and Geometric Regularity}\label{formal_smoothness}
Here, we recall some of the basic notions of formal smoothness and its relationship with ``genuine'' smoothness.

\begin{defn}Let $R \to A$ be a morphism of rings and let $I \subset A$ be an ideal. We say that $A$ is $I$-smooth over $R$ if the following condition holds:
For all rings $C$, all nilpotent ideals $J \subset C$, and all commutative diagrams
\[
\xymatrix{
R \ar[r] \ar[d] & C \ar@{>>}[d] \\
A \ar^{\psi}[r] &       C/J
}\]
with $C/J$ discrete and $\psi$ $I$-adically continuous (i.e. $\psi(I^m) = 0$ for $m >> 0$), there exists a lift $\widetilde{\psi}:A \to C$ making the diagram commute.
\end{defn}

We remark that if $I' \subset I \subset A$, then since the $I'$-topology is finer than that defined by $I$, the condition of $I'$-smoothness implies $I$-smoothness.

\begin{defn}\label{g_reg} Let $X$, $Y$, and $Z$ be Noetherian schemes.
\begin{enumerate}
\item[(a)] If $X$ is defined over a field $k$, we say that $X \to \Spec k$ is \emph{geometrically regular} if for all finite field extensions $L/k$, $X_L := X \times_k \Spec L$ is a regular scheme.
\item[(b)] A morphism $Y \to Z$ of Noetherian schemes is called \emph{geometrically regular} if it is flat and for all $z \in Z$, the fibre $Y_z := Y \times_Z \Spec k(z) \to \Spec k(z)$ is geometrically regular in the sense of (a). If $Y \to Z$ is geometrically regular and of finite-type, it is called \emph{smooth}.
\end{enumerate}
\end{defn}

\begin{rem}A morphism $Y \to Z$ satisfying the conditions of (b) in Definition \ref{g_reg} is often called ``regular'' while ``geometrically regular'' is reserved exclusively for maps to $\Spec k$ ($k$ a field). In this paper, we shall follow the Swan's convention \cite{Swan} and use ``geometrically regular'' for both notions. We do this to emphasize that definition (a) is indeed a special case of (b) and also to eliminate confusion between a relative and absolute notion: being ``geometrically regular'' is a property of a morphism while being ``regular'' is an intrinsic property of a ring or scheme.
\end{rem}

In the local case, the relationship between formal smoothness and geometric regularity is afforded by the following important lemma:

\begin{lem}\label{fsmooth_fibre}\cite[2.6.5]{Majadas} Let $(R,\fm_R,k) \to (S,\fm_S,k')$ be local homomorphism of Noetherian local rings. The following are then equivalent:
\begin{enumerate}
\item[(a)] $R \to S$ is $\fm_S$-smooth.
\item[(b)] $R \to S$ is flat and $k \to S \otimes_R k$ is geometrically regular.
\end{enumerate}
\end{lem}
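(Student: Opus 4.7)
The plan is to reduce both directions to the case where the base ring is a field, where the equivalence of $\fm_S$-smoothness with geometric regularity follows from Cohen's structure theorem together with careful use of the lifting property.

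For (a) $\Rightarrow$ (b), I would first show that $\fm_S$-smoothness of a local homomorphism of Noetherian local rings forces $R \to S$ to be flat. The key input is that applying the lifting property to truncated diagrams with targets $S/\fm_R^n S$ produces compatible sections which let one verify $\Tor_1^R(R/\fm_R, S) = 0$, giving flatness via Lemma \ref{localflatness1}. With flatness in hand, $\fm_S$-smoothness is stable under the base change $R \to k$ (all lifting diagrams pull back cleanly, using that $\fm_R$ maps to zero in any $k$-algebra target), yielding that $k \to S \otimes_R k$ is $\fm_{S \otimes_R k}$-smooth. It then remains to deduce geometric regularity from this.

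For the field-case equivalence, let $(T,\fn,k')$ be a Noetherian local $k$-algebra. Since $\fn$-smoothness is preserved under $\fn$-adic completion, $\widehat{T}$ is $\hat{\fn}$-smooth over $k$. Iteratively applying the lifting property to the surjections $\widehat{T}/\hat{\fn}^{n+1} \twoheadrightarrow \widehat{T}/\hat{\fn}^n$ yields a coefficient field $k' \hookrightarrow \widehat{T}$ together with a presentation $\widehat{T} \cong k'[[t_1,\ldots,t_d]]$; a separate application of the lifting property shows the extension $k \to k'$ is itself formally smooth and therefore separable. This gives regularity of $\widehat{T} \otimes_k L$ for every finite extension $L/k$, hence geometric regularity. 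The converse direction over a field is obtained by reversing this: geometric regularity forces $\widehat{T}$ (via Cohen) to be a power series ring over a separable extension of $k$, and such rings are directly checked to be $\fn$-smooth; one then descends formal smoothness from $\widehat{T}$ back to $T$ using that any nilpotent lifting problem factors through the completion since the target is discrete.

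Finally, for (b) $\Rightarrow$ (a) in the general case, I would combine flatness with the field-case result by a successive-approximation argument: given a lifting problem $\psi: S \to C/J$ with $J$ nilpotent, first solve it modulo $\fm_R C$ using $\fm_{S \otimes_R k}$-smoothness of the fibre, then inductively promote the lift modulo $\fm_R^n C$, where at each stage the obstruction lives in a $\Tor$ group that vanishes by flatness of $R \to S$. The main obstacle is genuinely the field case: extracting the power series structure on $\widehat{T}$ and proving separability of the coefficient field from the lifting property alone requires careful use of Cohen's structure theorem together with a Jacobian/derivation-module computation, and this is where all the real content of the lemma concentrates.
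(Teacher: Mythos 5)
The paper does not prove this lemma at all: it is cited directly from Majadas (and it is originally a theorem of Grothendieck — EGA $0_{\mathrm{IV}}$, 19.7.1; see also Matsumura, \emph{Commutative Ring Theory}, Theorems 28.7 and 28.10). So there is no proof in the paper to compare against, and I will evaluate your sketch on its own terms.

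Your overall architecture — pass to the fibre by base change, reduce the field case to the complete case, invoke Cohen's structure theorem, and use successive approximation plus flatness for (b)$\Rightarrow$(a) — is indeed the standard shape of the classical argument, and the descent of formal smoothness from $\widehat{T}$ to $T$ ("any nilpotent lifting problem factors through the completion") is exactly right. However, there is a genuine gap in your proposed flatness argument for (a)$\Rightarrow$(b). The claim that "applying the lifting property to truncated diagrams with targets $S/\fm_R^n S$ produces compatible sections which let one verify $\Tor_1^R(R/\fm_R,S)=0$" does not work as stated: the lifting problems along the quotient maps $S/\fm_R^{n+1}S \twoheadrightarrow S/\fm_R^n S$ are trivially solved by the quotient $S \to S/\fm_R^{n+1}S$, so they yield no new information, and there is no visible mechanism by which a family of such sections detects vanishing of $\Tor_1$. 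In the actual proof the flatness deduction is the hardest single step and is genuinely delicate: one completes, writes $\widehat{S}$ as a quotient of a power series ring over $\widehat{R}$, and analyzes the conormal module via the Jacobian criterion for formal smoothness (equivalently, via vanishing of André--Quillen $H_1$) to produce a presentation by a regular sequence, after which flatness follows from the local criterion (your Lemma \ref{localflatness1}). Your sketch skips this entirely.

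A second, smaller gap: in the field case you pass from "$k\to\widehat{T}$ is $\hat\fn$-smooth" directly to "$\widehat{T}\cong k'[[t_1,\dots,t_d]]$." Cohen's structure theorem alone only gives you a coefficient field and a presentation of $\widehat{T}$ as a \emph{quotient} of a power series ring; to conclude that $\widehat{T}$ is itself a power series ring you first need to know it is regular, and that regularity is again extracted from formal smoothness via the Jacobian criterion (or by comparing $\dim_{k'}\fn/\fn^2$ with $\dim T$ using the splitting of the second fundamental exact sequence). Your closing remark — that the real content concentrates in the field case — is only partly right: the flatness step in (a)$\Rightarrow$(b) carries at least as much weight, and it is precisely the part your sketch does not account for.
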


\begin{exmps}\label{geomregexmps}While all smooth morphisms are geometrically regular, the non-finite-type case arises fairly often in local algebra. Here are some examples that are relevant for this paper.
\begin{enumerate}
\item[(a)] If $(R,\fm_R,k)$ is a Noetherian local ring, then the Henselization map $R \to R^h$ is geometrically regular by virtue of being a filtered colimit of essentially-\'{e}tale $R$-algebras.
\item[(b)] If a Noetherian local ring $(R,\fm_R,k)$ is excellent, the completion map $R \to \widehat{R}$ is geometrically regular. Such rings are ubiquitous in algebraic geometry.
\item[(c)] If $A$ is excellent, a flat local map of Noetherian local rings $(R,\fm_R,k) \to (A,\fm_A,L)$ is geometrically regular if and only if $k \to A \otimes_R k$ is. This is Andr\'{e}'s so-called theorem of localization \cite{Andre}.
\item[(d)] A mixed characteristic regular local ring $(A,\fm_A,L)$ is called \emph{unramified} if there is some prime number $p \in \fm_A-\fm_A^2$. Then $\mathbb{Z}_{(p)} \to A$ is geometrically regular since the residue fields of $\mathbb{Z}_{(p)}$ are perfect and the fibre rings are regular.
\item[(e)] If $k$ is a perfect field, then every field extension $k \to L$ is geometrically regular.
\end{enumerate}
\end{exmps}

It is a well-known fact that a section of a smooth morphism $X \to Y$ is a closed immersion that is locally cut out by a regular sequence on $X$. We present here the local analogue, which we shall need later:

\begin{lem}\label{section}Let $(R,\fm_R,k)$ and $(S,\fm_S,k')$ be Noetherian local rings. Suppose that $\phi: R \to S$ is an $\fm_S$-smooth local homomorphism. Let $r: S \to R$ be a retraction in the sense that $r \circ \phi = \operatorname{Id}_R$. Then $I = \ker(r)$ is generated by an $S$-regular sequence of length $d = \dim S - \dim R$.
\end{lem}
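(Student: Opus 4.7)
The plan is to use the retraction to cut $S$ down to $R$ along a controlled regular sequence inside the closed fibre $\bar{S} := S \otimes_R k$.  By Lemma \ref{fsmooth_fibre}, $\fm_S$-smoothness gives flatness of $\phi$ together with regularity of $\bar{S}$; combined with the dimension formula for flat local maps this makes $\bar{S}$ a regular local ring of dimension $d = \dim S - \dim R$, whose maximal ideal $\fm_{\bar{S}} = \fm_S/\fm_R S$ is therefore generated by any regular system of parameters of length $d$.

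Next I would analyze how $I = \ker(r)$ sits inside $S$.  Since $r$ is a local surjection onto $R \neq 0$, the ideal $I$ must lie in $\fm_S$.  Reducing the retraction modulo $\fm_R$ gives $\bar{r}: \bar{S} \to R/\fm_R = k$, whose kernel is $(I+\fm_R S)/\fm_R S$; because $k \hookrightarrow \bar S \xrightarrow{\bar r} k$ is the identity, $\ker(\bar r)$ is a maximal ideal of $\bar S$ with quotient $k$.  This forces both $k = k'$ and $I + \fm_R S = \fm_S$.  Now choose $i_1,\ldots,i_d \in I$ lifting a regular system of parameters of $\bar S$.  Since $\bar S$ is regular of dimension $d$, the sequence $\bar{i}_1,\ldots,\bar{i}_d$ is regular on $\bar S = S \otimes_R k$, and Lemma \ref{localflatness1} (applied to the flat map $R \to S$) promotes it to an $S$-regular sequence.

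Set $J = (i_1,\ldots,i_d) \subseteq I$ and $T = S/J$.  It remains to verify $J = I$.  The retraction $r$ descends to a surjection $\bar r: T \twoheadrightarrow R$ split by $\bar\phi: R \to T$, giving a decomposition of $R$-modules $T = \bar\phi(R) \oplus (I/J)$.  Tensoring with $k$ over $R$ and using that $T \otimes_R k = \bar S/(\bar i_1,\ldots,\bar i_d) = \bar S/\fm_{\bar S} = k$ yields $(I/J) \otimes_R k = 0$, i.e.\ $I = J + \fm_R I \subseteq J + \fm_S I$.  Since $I$ is a finitely-generated $S$-module, Nakayama's lemma forces $I = J$, and the length of the regular sequence is exactly $d$ as required.

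The one step that warrants care is the passage from $(I/J) \otimes_R k = 0$ to $I = J$ via Nakayama: one must invoke the finite generation of $I$ as an $S$-module and the inclusion $\fm_R S \subseteq \fm_S$ to convert the vanishing of $I/(J+\fm_R I)$ into the hypothesis of Nakayama's lemma on the $S$-module $I/J$.  The rest is a direct application of Lemma \ref{fsmooth_fibre} and Lemma \ref{localflatness1}.
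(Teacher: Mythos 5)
Your proof is correct and follows essentially the same strategy as the paper: use Lemma \ref{fsmooth_fibre} to get flatness and regularity of $\overline S = S/\fm_R S$, identify the image of $I$ in $\overline S$ with $\fm_{\overline S}$, lift a regular system of parameters to $I$, promote it to an $S$-regular sequence via Lemma \ref{localflatness1}, and finish by Nakayama. The only cosmetic difference is that the paper tensors the sequence $0 \to I \to S \to R \to 0$ with $k$ to read off $I/\fm_R I \cong \fm_{\overline S}$ and applies Nakayama to $I$ directly, whereas you pass through $T = S/J$ and its splitting to establish $J = I$ --- a slightly longer route to the same conclusion.
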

\begin{proof}By Lemma \ref{fsmooth_fibre}, $R \to S$ is flat with geometrically regular fibre $k \to S/\fm_R S$. In particular, $S/\fm_R S$ is a regular local ring. If we apply $- \otimes_R k$ to the short-exact sequence
\[ 0 \to I \to S \stackrel{r}{\longrightarrow} R \to 0 \]
we obtain
\[ 0 \to I/\fm_R I \to S/\fm_R S \to k \to 0. \]
$I/\fm_R I$ is the maximal ideal of $S/\fm_R S$ and is therefore generated by a regular sequence of length $d = \dim(S/\fm_R S) = \dim S - \dim R$ \cite[15.1]{Matsumura}. By Nakayama's Lemma, $I$ is generated by $d$ elements $t_1, \cdots, t_d$ of $S$. As $R \to S$ is flat and the $t_i$ form an $S/\fm_R S$-regular sequence, we conclude by  Lemma \ref{localflatness1} that the $t_i$ form a regular sequence on $S$.
\end{proof}

We close this subsection by introducing Popescu's Theorem, which states that every geometrically regular morphism arises as a filtered colimit of smooth ones.

\begin{thm}[Popescu's Theorem]\cite{Popescu}, \cite{Swan}\label{popescu} Let $R \to A$ be a geometrically regular map of Noetherian rings. Then there exists a filtered system $\left\{A_{\lambda}\right\}_{\lambda \in \Lambda}$ of smooth (and hence finite-type) $R$-algebras such that $A = \fcolim A_{\lambda}$.
\end{thm}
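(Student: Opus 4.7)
The plan is to follow Popescu's original approach, perhaps as streamlined by Swan. A filtered colimit of smooth $R$-algebras is automatically flat with geometrically regular fibres, so the content lies entirely in the converse. The standard reduction is this: it suffices to show that for every finitely generated $R$-subalgebra $B \subset A$, there exists a smooth $R$-algebra $B'$ and a factorization $B \to B' \to A$; granting this, the set of all smooth sub-$R$-algebras of $A$ forms a filtered diagram with colimit $A$. Fixing a presentation $B = R[T_1,\dots,T_n]/(f_1,\dots,f_s)$ together with the inclusion $B \hookrightarrow A$, the obstruction to $B$ being smooth over $R$ is captured by the Jacobian ideal $H_{B/R} \subset B$ generated by the $s \times s$ minors of $(\partial f_i/\partial T_j)$ (together with the complementary columns); the target is to modify the presentation until the pullback of this ideal to $A$ becomes the unit ideal.

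The central technical input is the \emph{N\'eron--Popescu desingularization lemma}: starting from the data above and an ideal $\mathfrak{c} \subset A$ measuring the ``distance to smoothness,'' one constructs a factorization $B \to B^{\dagger} \to A$ with $B^{\dagger}$ finitely presented over $R$ and with the pullback of $H_{B^{\dagger}/R}$ to $A$ strictly larger than that of $H_{B/R}$. The construction combines (i) an Elkik/Artin-type approximation step, which exploits the flatness of $R \to A$ to solve systems of polynomial equations modulo high powers of $\mathfrak{c}$, and (ii) N\'eron's original desingularization for one-dimensional bases, which uses the geometric regularity of the fibres over $R$. An induction on the height or colength of this Jacobian-type ideal --- suitably formalized --- terminates with a smooth $R$-algebra $B'$ mapping to $A$, proving the reduction.

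The main obstacle is the desingularization lemma itself. Concretely, given a lift to $A$ of some Jacobian minor of $(f_1,\dots,f_s)$, one must produce new polynomial generators that witness the smoothness of $B^{\dagger}$ \emph{and} remain compatible with the existing map to $A$; this requires a delicate bookkeeping of the $(f_i)$ together with auxiliary variables whose existence is ultimately guaranteed only by N\'eron's DVR case. Everything else --- reducing to finitely generated subalgebras, organizing the filtered diagram, terminating the induction --- is formal once this step is in hand. Since the combinatorics of the desingularization occupy the bulk of Popescu's and Swan's papers, I would invoke that work directly rather than reproduce it here.
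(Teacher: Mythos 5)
The paper states Popescu's Theorem purely as a citation to \cite{Popescu} and \cite{Swan} and gives no proof of its own, so there is nothing for your argument to deviate from. Your sketch — reducing to finitely generated $R$-subalgebras, tracking a Jacobian obstruction ideal, and invoking the N\'eron--Popescu desingularization lemma, with explicit deferral to Popescu and Swan for that core lemma — is an accurate summary of the published proof and is entirely consistent with how the paper uses the result.
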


\subsection{K-Theoretic Consequences of Popescu's Theorem}\label{popescu_consequence} Quillen originally proved Gersten's Conjecture for local rings essentially smooth over a field \cite[VII.5.11]{Quillen}. If $(A,\fm_A,L)$ is an equicharacteristic regular local ring, we can take a perfect subfield $k_{\circ} \subset A$, thereby forcing $k_{\circ} \to A$ to be geometrically regular. By Popescu's Theorem, we can therefore write $A$ as a filtered colimit of local rings $\left\{A_{\lambda}\right\}_{\lambda \in \Lambda}$ which are essentially smooth over $k_{\circ}$. However, the transition maps $A_{\lambda} \to A_{\mu}$ need not be flat and therefore are not compatible with the codimensional filtration. Panin's \cite{Panin} clever insight was to realize the Gersten cohomology groups $H^p(\mcg_n^{\cdot}(A))$ via Zariski sheaf cohomology, an invariant which, by Grothendieck's Limit Theorem \cite[VII.5.7]{SGA4.2}, is compatible with inverse limits of affine schemes, regardless of whether the transition maps are flat.  We state here the key results that we will need in the proof of our Main Theorem \ref{mainthm}.
\begin{thm}\cite{Panin}\label{panin} If $A$ is a regular local ring containing a field, then $A$ satisfies Gersten's Conjecture. That is,
\[ H^p(\mcg^{\cdot}_n(A)) = \left\{ \begin{array}{ll} K_n(A) & p = 0 \\ 0 & p > 0 \end{array} \right. \]
\end{thm}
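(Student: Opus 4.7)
My plan, following Panin, is to realize $A$ as a filtered colimit of essentially smooth local rings over a perfect prime subfield, apply Quillen's theorem at each stage, and pass to the limit using Zariski sheaf cohomology --- which, unlike the Gersten complex itself, is compatible with inverse limits of affine schemes regardless of the flatness of the transition maps.

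First, let $k_0 \subset A$ be the prime subfield ($\mathbb{Q}$ or $\mathbb{F}_p$), which is perfect and embeds in $A$ since $A$ is equicharacteristic. Every finite extension $L/k_0$ is separable, so $A \otimes_{k_0} L$ is \'{e}tale --- and hence regular --- over the regular ring $A$, making $k_0 \to A$ geometrically regular. By Popescu's Theorem \ref{popescu}, $A = \varinjlim_{\lambda} A_{\lambda}$ with each $A_{\lambda}$ smooth over $k_0$; after localizing each $A_{\lambda}$ at the contraction of $\fm_A$, we may assume each $A_{\lambda}$ is a local ring essentially smooth over $k_0$, with local (but in general non-flat) transition maps.

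Second, Quillen's theorem \cite[VII.5.11]{Quillen} gives Gersten's conjecture for each $A_{\lambda}$ and, more importantly, at every further localization (which remains essentially smooth over $k_0$). Thus the sheaf Gersten complex $\dumcg_{n,X_{\lambda}}^{\cdot}$ on $X_{\lambda} = \Spec A_{\lambda}$ is a resolution of the Zariski sheaf $\mck_n$ --- a stalk-local condition verified by Quillen at every prime --- and Lemma \ref{hypercohomology} identifies
\[ H^p(\mcg_n^{\cdot}(A_{\lambda})) = \mathbb{H}^p_{\Zar}(X_{\lambda}, \dumcg_{n,X_{\lambda}}^{\cdot}) = H^p_{\Zar}(X_{\lambda}, \mck_n), \]
which is zero in positive degrees by Quillen's theorem for $A_{\lambda}$ itself.

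Third, apply Grothendieck's Limit Theorem \cite[VII.5.7]{SGA4.2} to the affine inverse system $X = \varprojlim_{\lambda} X_{\lambda}$ and the compatible sheaves $\mck_n$: Zariski cohomology commutes with this limit, giving
\[ H^p_{\Zar}(X, \mck_n) \cong \varinjlim_{\lambda} H^p_{\Zar}(X_{\lambda}, \mck_n) = 0 \text{ for all } p > 0. \]
To translate this into Gersten acyclicity on $A$ itself, argue by induction on $\dim A$. The base case $\dim A = 0$ is trivial. For the inductive step, the hypothesis yields Gersten at every $A_{\fp}$ with $\fp \ne \fm_A$, so the cohomology sheaves $\mathcal{H}^s(\dumcg_{n,X}^{\cdot})$ for $s > 0$ are supported at the closed point and hence flasque on the local scheme $X$. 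The hypercohomology spectral sequence then degenerates into an identification of $H^p(\mcg_n^{\cdot}(A))$ with $H^p_{\Zar}(X, \mck_n)$ in positive degrees, which vanishes by the previous step.

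The principal obstacle --- and the very reason for this sheaf-theoretic detour --- is that the transition maps $A_{\lambda} \to A_{\mu}$ in the Popescu colimit are not flat, so the Gersten complex is not functorial along the system and admits no direct colimit description. Panin's insight is precisely that Zariski cohomology of $\mck_n$, accessed via the Quillen resolution at each finitistic stage, \emph{is} functorial in this weaker sense, and Grothendieck's Limit Theorem closes the argument without requiring any flatness.
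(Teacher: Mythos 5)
The paper itself does not prove this theorem; it cites Panin directly and treats it (together with its corollary, Lemma \ref{panin_sheaf}, which depends on it) as a black box. So I assess your sketch on its own merits, and there is a fatal gap.

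The problem is that your sheaf-cohomological detour is vacuous on a local scheme. If $X = \Spec A$ with $(A,\fm_A)$ local, the only open subset of $X$ containing the closed point is $X$ itself, so $\Gamma(X,\mcf) = \mcf_{\fm_A}$ for \emph{every} abelian sheaf $\mcf$; taking global sections is taking a stalk, which is exact, and therefore $H^p_{\Zar}(X,\mcf) = 0$ for all $p>0$ and all $\mcf$, with no $K$-theoretic input whatsoever. Your steps 2 and 3 thus establish nothing: $H^p_{\Zar}(X_\lambda,\mck_n)=0=H^p_{\Zar}(X,\mck_n)$ are automatic, Grothendieck's Limit Theorem does no work, and these equalities cannot distinguish a local ring satisfying Gersten's conjecture from one that does not. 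The same vacuousness undermines step 4: on the local scheme $X$ the hypercohomology spectral sequence $E_2^{pq}=H^p_{\Zar}(X,\mathcal{H}^q(\dumcg^{\cdot}_{n,X}))\Rightarrow H^{p+q}(\mcg^{\cdot}_n(A))$ is concentrated in the column $p=0$, with $E_2^{0q}=(\mathcal{H}^q(\dumcg^{\cdot}_{n,X}))_{\fm_A}=H^q(\mcg^{\cdot}_n(A))$ by Lemma \ref{complex_stalk}(i), so the spectral sequence degenerates into the identity map on $H^q(\mcg^{\cdot}_n(A))$ --- a tautology, not a vanishing. (Moreover, the identification $\mathcal{H}^0(\dumcg^{\cdot}_{n,X})=\mck_n$ that your claimed comparison would require fails precisely at the closed point, where it \emph{is} the Gersten conjecture.)

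Sheaf cohomology plus the limit theorem is exactly the right tool when the schemes in play are \emph{not} local --- this is how the paper deploys Lemma \ref{panin_sheaf} and Proposition \ref{panin_limit} in the proof of Theorem \ref{mainthm}, on the (generally non-local) fibre schemes $X_y$. But to prove Theorem \ref{panin} itself one must do genuine work at the level of transfer maps: show, via a Quillen-style normalization as in Proposition \ref{main_thm_weak}, that $K_n(\mcm(A/fA))\to K_n(\mcm(A))$ vanishes for $f\in\fm_A$ nonzero, and then conclude using \cite[VII.5.6]{Quillen} together with the prime-avoidance Lemma \ref{avoidance}. The difficulty Panin actually overcame is that the normalization and the splitting constructed at each finitistic stage $A_\lambda$ must be chosen compatibly with the Popescu system $\{A_\lambda\}$; none of that content appears in your sketch.
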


This result, in conjunction with \cite[VII.5.8]{Quillen}, yields the following useful corollary, which permits us to realize cohomology of the Gersten complex as Zariski sheaf cohomology:

\begin{cor}\label{panin_sheaf} Let $k$ be a field and suppose that $X$ is a regular, Noetherian scheme defined over $k$. Denote by $\mck_n$ the Zariski sheafification of the presheaf $U \mapsto K_n(U)$. Then there are isomorphisms
\[ H^p_{\Zar}(X,\mck_n) \stackrel{\sim}{\longrightarrow} H^p(\mcg^{\cdot}_n(X)). \]
\end{cor}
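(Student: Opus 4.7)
\medskip

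\noindent\textbf{Proof proposal.} The plan is to view $\dumcg^{\cdot}_{n,X}$ (in the Zariski topology) as a resolution of $\mck_n$, compute its hypercohomology in two ways, and compare. Concretely, I would proceed as follows.

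First, Lemma \ref{complex_stalk}(i) identifies the Zariski stalk of $\dumcg^{\cdot}_{n,X}$ at a point $x \in X$ with the ordinary Gersten complex $\mcg^{\cdot}_{n}(\mco_{X,x})$. Since $X$ is regular over $k$, each $\mco_{X,x}$ is a regular local ring containing a field, so Panin's Theorem \ref{panin} applies stalkwise: the complex $\mcg^{\cdot}_n(\mco_{X,x})$ is acyclic in positive degrees and has $H^0$ equal to $K_n(\mco_{X,x})$. Combined with the fact that the Zariski stalk of $\mck_n$ at $x$ is $K_n(\mco_{X,x})$ (filtered colimits of $K$-theory along open neighborhoods, using that $X$ regular forces $K_n = K_n'$), this shows that the augmentation $\mck_n \to \dumcg^{\cdot}_{n,X}$ is an isomorphism on the $0$th cohomology sheaf and that all higher cohomology sheaves vanish. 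Hence $\dumcg^{\cdot}_{n,X}$ is a resolution of $\mck_n$ in the category of Zariski sheaves.

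Now I would run the Zariski hypercohomology spectral sequence for the complex $\dumcg^{\cdot}_{n,X}$ in two different ways. Filtering by the ``sheaf cohomology'' direction yields
\[ E_2^{pq} = H^p_{\Zar}\bigl(X, \mathcal{H}^q(\dumcg^{\cdot}_{n,X})\bigr) \Rightarrow \mathbb{H}^{p+q}_{\Zar}(X, \dumcg^{\cdot}_{n,X}), \]
which, by the previous paragraph, degenerates on the $E_2$ page into isomorphisms $H^p_{\Zar}(X, \mck_n) \cong \mathbb{H}^p_{\Zar}(X, \dumcg^{\cdot}_{n,X})$. On the other hand, Lemma \ref{hypercohomology}(i) already identifies the right-hand side with $H^p(\mcg^{\cdot}_n(X))$. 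Chaining these isomorphisms delivers the desired $H^p_{\Zar}(X, \mck_n) \stackrel{\sim}{\to} H^p(\mcg^{\cdot}_n(X))$.

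The only nontrivial ingredient is the stalkwise acyclicity, which is precisely the content of Panin's theorem; the rest is a formal consequence of the sheaf-theoretic setup developed in Sections \ref{coniveau} and \ref{nisnevich}. The main conceptual point — and the only step requiring care — is to verify that the map $\mck_n \to \mathcal{H}^0(\dumcg^{\cdot}_{n,X})$ is an isomorphism of Zariski sheaves; this boils down to matching stalks $K_n(\mco_{X,x})$ on both sides, with Panin's theorem providing the identification on the right.
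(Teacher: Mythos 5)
Your proposal is correct and follows the same route the paper takes: the paper simply cites Panin's theorem together with Quillen's \cite[VII.5.8]{Quillen}, whose content is exactly the argument you spelled out — stalkwise exactness of the augmented Gersten complex of Zariski sheaves (here supplied by Panin via Lemma \ref{complex_stalk}(i)), acyclicity of each term for $\Gamma_{\Zar}$ (Lemma \ref{hypercohomology}(i)), and the resulting identification of $H^p_{\Zar}(X,\mck_n)$ with $H^p(\mcg^{\cdot}_n(X))$ through the hypercohomology of $\dumcg^{\cdot}_{n,X}$. No gaps; you've just unwound the citation.
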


Since the $K$-theory of vector bundles commutes with inverse limits of Noetherian affine schemes \cite[VII.2.2]{Quillen}, the Grothendieck Limit Theorem \cite[VII.5.7]{SGA4.2} (see also \cite[\S6]{Panin}) furnishes the following result:

\begin{prop}\label{panin_limit} Let $\Lambda$ be a directed set and let $\left\{X_{\lambda}\right\}_{\lambda \in \Lambda}$ be an inverse system of Noetherian affine schemes such that the limit $\ds X := \varprojlim_{\lambda \in \Lambda} X_{\lambda}$ is Noetherian. Then for all $p,n \geq 0$ there are isomorphisms 
\[\varinjlim_{\lambda \in \Lambda}H^p_{\Zar}(X_\lambda, \mck_n) \stackrel{\sim}{\longrightarrow} H^p_{\Zar}(X,\mck_n). \]
\end{prop}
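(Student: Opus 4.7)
The plan is to apply Grothendieck's Limit Theorem after identifying the Zariski $K$-theory sheaf on $X$ as a suitable colimit of pullbacks of $K$-theory sheaves from the $X_\lambda$. Writing $\pi_\lambda: X \to X_\lambda$ for the canonical projections and $\pi_{\lambda\mu}: X_\mu \to X_\lambda$ for the transition maps when $\mu \geq \lambda$, flat pullback on $K$-theory presheaves assembles into a compatible system whose colimit gives a canonical morphism of sheaves on $X$:
\[ \Phi: \varinjlim_\lambda \pi_\lambda^{-1}\mck^{\Zar}_{n,X_\lambda} \longrightarrow \mck^{\Zar}_{n,X}. \]

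The first task is to verify that $\Phi$ is an isomorphism, which I would do by computing on stalks. For $x \in X$ with images $x_\lambda \in X_\lambda$, one has $\mco_{X,x} = \varinjlim_\lambda \mco_{X_\lambda, x_\lambda}$, since every principal Zariski open neighborhood of $x$ is cut out by finitely many sections and these spread out to some $X_\lambda$. Invoking \cite[VII.2.2]{Quillen}, which asserts that $K$-theory commutes with filtered colimits of Noetherian rings, one obtains
\[ \bigl(\varinjlim_\lambda \pi_\lambda^{-1}\mck^{\Zar}_{n,X_\lambda}\bigr)_x \;=\; \varinjlim_\lambda K_n(\mco_{X_\lambda, x_\lambda}) \;=\; K_n(\mco_{X,x}) \;=\; \bigl(\mck^{\Zar}_{n,X}\bigr)_x, \]
so $\Phi$ is an isomorphism of Zariski sheaves.

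The second task is then to invoke Grothendieck's Limit Theorem \cite[VII.5.7]{SGA4.2} (see also \cite[\S 6]{Panin}). That theorem states that for a cofiltered inverse system of coherent topoi with Noetherian limit, and a system of sheaves $\mcf_\lambda$ with pullback compatibilities, the natural map
\[ \varinjlim_\lambda H^p(X_\lambda, \mcf_\lambda) \stackrel{\sim}{\longrightarrow} H^p\bigl(X, \varinjlim_\lambda \pi_\lambda^{-1}\mcf_\lambda\bigr) \]
is an isomorphism for every $p \geq 0$. Applying this with $\mcf_\lambda = \mck^{\Zar}_{n,X_\lambda}$ and transporting through $\Phi$ yields the desired identification.

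The main technical obstacle is the stalk-level identification and the verification of the hypotheses of the limit theorem. The coherence of the Zariski topoi of the $X_\lambda$ and the qcqs-ness of the transition maps hold automatically because all schemes involved are Noetherian and affine; the crucial input beyond formal bookkeeping is \cite[VII.2.2]{Quillen}, which is what allows the stalk calculation to pass from local rings back to $K$-theory.
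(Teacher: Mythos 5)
Your proposal is correct and follows exactly the path the paper itself indicates (the paper states the proposition without a written-out proof, but cites \cite[VII.2.2]{Quillen} for compatibility of $K$-theory with limits of Noetherian affine schemes and \cite[VII.5.7]{SGA4.2} for the cohomological limit theorem --- precisely the two inputs you use). Your stalk computation, including the identification $\mco_{X,x} = \varinjlim_\lambda \mco_{X_\lambda, x_\lambda}$ and the threefold use of Quillen's theorem (once for each stalk on the $X_\lambda$, once for the stalk on $X$, and once for the colimit of local rings), is the correct way to make the citation precise, and the coherence/qcqs hypotheses needed for the SGA4 theorem are automatic for Noetherian affine schemes as you note.
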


\subsection{Chow Groups and Codimension}\label{codimension}For an arbitrary Noetherian scheme $X$, there is some ambiguity regarding the definition of the Chow group. We adopt the convention in \cite[8.1]{GilletSoule}. We define $Z^p(X)$ to be the free abelian group on integral subschemes $V \subset X$ of codimension $p$ (or, equivalently, the free abelian group on the set $X^p = \left\{x \in X: \dim \mco_{X,x} = p \right\}$). For an integral, closed subscheme $W$ of codimension $p-1$, and an $f \in k(W)^{\times}$, one defines the rational cycle $\operatorname{div}(f) = \sum_{V}\ord_V(f)[V] \in Z^p(X)$ where the sum is taken over all codimension-$p$ integral subschemes $V \subset X$ that are contained in $W$ and $\ord_V$ is the order function on the $1$-dimensional domain $\mco_{W,V}$ (cf. \cite[A.3]{Fulton}). In this way, one obtains a map
\[ \bigoplus_{\mathclap{\codim W = p-1}}k(W)^{\times} \stackrel{\operatorname{div}}{\longrightarrow} Z^p(X) \]
whose cokernel is defined to be $\ch^p(X)$. The map $\operatorname{div}$ agrees up to sign with the $E_1^{p-1,-p} \to E_1^{p,-p}$ differential in the coniveau spectral sequence on $X$:

\begin{lem}\label{gersten_chow} \cite[VII.5.14]{Quillen} \cite[V.9.5]{Kbook} Let $X$ be a Noetherian scheme. Then the following identifications hold for $p \geq 0$:
\[ \ch^p(X) = E_2^{p,-p}(X) = H^p(\mcg_p^{\cdot}(X)). \]
Here, $E_r^{pq}(X)$ is the coniveau spectral sequence on $X$ (\ref{BGQ}).
\end{lem}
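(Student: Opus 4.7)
The plan is to treat the two equalities separately. The identification $E_2^{p,-p}(X) = H^p(\mcg_p^{\cdot}(X))$ is tautological from the construction of the coniveau spectral sequence: row $q = -p$ of the $E_1$ page is by definition $\mcg_p^{\cdot}(X)$, so cohomology at the $(p,-p)$ spot is the corresponding $E_2$ entry. The real content is thus $\ch^p(X) = H^p(\mcg_p^{\cdot}(X))$. To prove this, I would first use the vanishing of negative $K$-theory for fields to observe that $\mcg_p^{q}(X) = 0$ for $q > p$, so $H^p(\mcg_p^{\cdot}(X))$ is the cokernel of
\[ d^{p-1,-p}: \bigoplus_{x \in X^{p-1}} K_1(k(x)) \longrightarrow \bigoplus_{x \in X^p} K_0(k(x)). \]
Under the standard identifications $K_1(k(x)) = k(x)^\times$ and $K_0(k(y)) = \mathbb{Z}$, the target is precisely $Z^p(X)$, and the source is $\bigoplus_W k(W)^\times$ with $W$ ranging over integral closed subschemes of codimension $p-1$. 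It thus suffices to show that $d^{p-1,-p}$ agrees with $\operatorname{div}$ up to a global sign.

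Fix $x \in X^{p-1}$ with closure $W = \overline{\{x\}}$, and $f \in k(x)^\times$. By construction of the $d_1$ differential via the boundary of the localization sequence, the $y$-component of $d^{p-1,-p}(f)$ for $y \in X^p$ vanishes unless $y \in W$, in which case $y$ is a codimension-$1$ point of $W$. Flat pullback along $\Spec \mco_{X,y} \to X$ induces a morphism of coniveau spectral sequences and reduces the computation to the one-dimensional local domain $(A,\fm,\kappa) = \mco_{W,y}$ with fraction field $F = k(x)$. In that local setting, $d^{p-1,-p}$ becomes the Quillen boundary $\partial: K_1(F) \to K_0(\mcm^{\operatorname{fl}}(A)) \cong K_0(\kappa) = \mathbb{Z}$, where the last isomorphism is by devissage via length. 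Writing $f = g/h$ with $g,h \in A$ and using bilinearity of $\partial$ together with the exact sequences $0 \to A \xrightarrow{g} A \to A/(g) \to 0$, one computes $\partial(f) = \operatorname{length}_A(A/(g)) - \operatorname{length}_A(A/(h)) = \ord_A(f)$, which is the definition of $\operatorname{div}$ on the $(x,y)$ component.

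The main (essentially only) obstacle is this explicit identification of the boundary map, together with sign-tracking dictated by the conventions of Theorem \ref{BGQ} and Quillen's localization theorem. The computation itself is a standard application of the additivity theorem, but the bookkeeping of signs and the verification that flat pullback to $\Spec \mco_{X,y}$ genuinely recovers the local boundary require care, particularly since one must ensure that Lemma \ref{finite_coniveau} correctly relates the $K$-theory of $W \cap \Spec \mco_{X,y}$ to a direct summand of the coniveau complex of $\Spec \mco_{X,y}$. Once settled, $\ch^p(X)$ and $H^p(\mcg_p^{\cdot}(X))$ are cokernels of (up to sign) the same map between the same groups, hence coincide.
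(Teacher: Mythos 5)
Your proof is correct and matches the standard argument of the cited references (Quillen VII.5.14 and Weibel V.9.5); the paper itself does not prove this lemma but merely cites those sources. Your reduction chain --- flat pullback to $\Spec\mco_{X,y}$, finite pushforward from the one-dimensional domain $\mco_{W,y}$ (which is indeed one-dimensional since $\hgt\fp + \dim(\mco_{X,y}/\fp) \leq \dim\mco_{X,y} = p$ forces $\dim\mco_{W,y}=1$), and then the explicit boundary computation $\partial[f]=\ord_A(f)$ --- is precisely the content of those proofs.
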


This definition differs slightly from Fulton's \cite[1.3]{Fulton} where the rational cycle associated to $f \in k(W)^{\times}$ is $\operatorname{div}(f) = \sum_{V}\ord_V(f)[V]$ and the sum is now taken over all $V$ having codimension $1$ in $W$. In the geometric setting, the two notions are equivalent. However, if $X$ is not catenary, codimension may not be additive; so this proposed rational cycle may not even be homogeneous. The following lemma gives sufficient conditions under which the two definitions agree.

\begin{lem}\label{good_codim}Let $X$ be a Noetherian scheme and suppose that
\begin{enumerate}
\item[(a)] $X$ is catenary.
\item[(b)] For all $x \in X$, $\mco_{X,x}$ is equidimensional.
\end{enumerate}
Then if $Y \subset X$ is a closed, irreducible subscheme of codimension $r$ and $x \in Y^s$, then $x \in X^{r+s}$. The first definition of rational equivalence therefore agrees with Fulton's.
\end{lem}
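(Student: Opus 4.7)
The plan is to translate the claim into commutative algebra on the local ring $A := \mco_{X,x}$ and apply a standard dimension formula valid under hypotheses (a) and (b). First, let $y$ denote the generic point of $Y$ and let $\fp \subset A$ be the prime ideal corresponding to the image of $y$ in $\Spec A$. Because $Y$ is reduced and irreducible, $\mco_{Y,x} = A/\fp$, and the localization $A_{\fp}$ is $\mco_{X,y}$. The assumptions $\codim_X Y = r$ and $x \in Y^s$ translate, respectively, to $\hgt_A(\fp) = r$ and $\dim(A/\fp) = s$, so the goal becomes to show that $\dim A = r + s$.

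Next, I would invoke the standard fact that in a Noetherian local ring which is catenary and equidimensional, one has $\hgt(\fp) + \dim(A/\fp) = \dim A$ for every prime $\fp$. A short self-contained derivation: pick a minimal prime $\fq \subset \fp$ with $\hgt(\fp/\fq) = \hgt(\fp)$; catenarity (applied to any saturated chain from $\fq$ through $\fp$ to $\fm_A$) gives $\hgt(\fp/\fq) + \dim(A/\fp) = \dim(A/\fq)$, and equidimensionality gives $\dim(A/\fq) = \dim A$. Substituting the translated values yields $\dim A = r + s$, whence $x \in X^{r+s}$.

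Finally, to obtain agreement of the two definitions of rational equivalence, fix $W \subset X$ closed irreducible of codimension $p-1$ and $f \in k(W)^{\times}$. Fulton's sum $\sum_V \ord_V(f)[V]$ is indexed by integral subschemes $V \subset W$ having codimension $1$ in $W$, while the first definition indexes by integral subschemes $V \subset W$ of codimension $p$ in $X$. The main claim applied to the pair $(W,V)$, together with its converse direction (if $V \subset W$ has codim $p$ in $X$, then $(p-1) + \codim_W V = p$ by the lemma, so $\codim_W V = 1$), shows that these two indexing sets coincide; the coefficients match because both are computed by the same order function on the one-dimensional local domain $\mco_{W,V}$.

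The only substantive step is the dimension formula, a well-known formal consequence of catenarity together with equidimensionality; the two translations in the first paragraph and the set-theoretic bookkeeping in the last are routine. I do not anticipate any genuine obstacle: the hypotheses (a) and (b) were chosen precisely so that the standard localization-of-dimension formula applies to every $\mco_{X,x}$.
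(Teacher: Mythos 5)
Your proof is correct and takes essentially the same approach as the paper: both reduce the statement to the dimension formula $\hgt(\fp) + \dim(A/\fp) = \dim A$ in $A=\mco_{X,x}$, established by splicing saturated chains and invoking catenarity and equidimensionality. The only small difference is that you choose the minimal prime $\fq\subset\fp$ so that $\hgt(\fp/\fq)=\hgt(\fp)$ from the outset (needing equidimensionality only of $\mco_{X,x}$ itself), whereas the paper takes an arbitrary minimal prime $\fq_0\subset\fp$ and appeals to equidimensionality of the localization $(\mco_{X,x})_\fp$ to get $\hgt(\fp/\fq_0)=r$; the two variants are interchangeable.
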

\begin{proof}By hypothesis, the generic point of $Y$ corresponds to a height $r$ prime $\fp$ of $\mco_{X,x}$. Choose $\fq_0 \subset \fp$ to be a minimal prime of $\mco_{X,x}$. Since $\mco_{X,x}$ is equidimensional, $\dim \mco_{X,x} = \dim \mco_{X,x}/\fq_0$. As $(\mco_{X,x})_{\fp}$ is also equidimensional, there exists a saturated chain $\fq_0 \subset \fq_1 \subset \cdots \subset \fq_r = \fp$ of distinct primes in $\mco_{X,x}$. On the other hand, $\mco_{Y,x} = \mco_{X,x}/\fp$, so we can get a saturated chain of primes $\fp = \fp_0 \subset \fp_1 \subset \cdots \subset \fp_s = \fn$ where $\fn$ is the maximal ideal of $\mco_{X,x}$. Concatenating these chains together gives a saturated chain of $r+s$ primes between $\fq$ and $\fn$. Since $\mco_{X,x}$ is catenary, all saturated chains between $\fq_0$ and $\fn$ must have length $r+s$, whence $\dim \mco_{X,x} = r+s$ as claimed.
\end{proof}

\begin{rem}A Noetherian scheme $X$ will satisfy the conditions of Lemma \ref{good_codim} if it is Cohen-Macaulay \cite[17.6, 17.9]{Matsumura}.
\end{rem}

\section{The Finite-Type Case}\label{finite_case}
\subsection{Structure Theorems} Quillen's proof of Gersten's Conjecture in the geometric case relies on a type of Noether Normalization Lemma \cite[VII.5.12]{Quillen}. Here, we prove local analogues in the mixed characteristic setting. In the next section, we will use these results in conjunction with Popescu's Theorem to bootstrap up to the main theorem.
\begin{lem}\label{local_finiteness}Let $R \to S$ be a finite map. Let $\fq \in \Spec S$ be a prime contracting to $\fp \in \Spec R$. Then $k(\fp) = R_{\fp}/\fp R_{\fp} \to S_{\fq}/\fp S_{\fq}$ is finite.
\end{lem}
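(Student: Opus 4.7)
The plan is to reduce to a statement about the fibre algebra $S \otimes_R k(\fp)$ and then extract $S_\fq / \fp S_\fq$ as one of its local factors.

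First I would observe that since $R \to S$ is finite, $S \otimes_R k(\fp)$ is a finite-dimensional $k(\fp)$-algebra, hence Artinian. Thus it decomposes as a finite product of its localizations at maximal ideals, each of which is a direct summand and therefore also finite-dimensional over $k(\fp)$. The maximal ideals of $S \otimes_R k(\fp)$ correspond precisely to the primes of $S$ contracting to $\fp$; in particular, $\fq$ gives one such maximal ideal.

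Next I would identify $S_\fq/\fp S_\fq$ as this local factor. Since $\fq \cap R = \fp$ we have $R \setminus \fp \subset S \setminus \fq$, so elements of $R \setminus \fp$ are already invertible in $S_\fq$. Hence
\[ S_\fq / \fp S_\fq \;=\; S_\fq \otimes_R k(\fp) \;=\; (S \otimes_R k(\fp)) \otimes_S S_\fq, \]
which is exactly the localization of the Artinian ring $S \otimes_R k(\fp)$ at the maximal ideal corresponding to $\fq$. This matches one of the direct factors described above, and so is finite-dimensional as a $k(\fp)$-vector space, proving the claim.

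There is no genuine obstacle here; the lemma is a direct application of the fact that finitely generated modules localize nicely and that a finite-dimensional algebra over a field is Artinian. The only care needed is the slightly non-obvious identification $S_\fq \otimes_R k(\fp) = (S \otimes_R k(\fp))_\fq$, which follows from the inclusion $R \setminus \fp \subset S \setminus \fq$.
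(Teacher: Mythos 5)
Your argument is the same as the paper's: base-change to $k(\fp)$ to get the Artinian algebra $S \otimes_R k(\fp)$, decompose it into local factors, and identify $S_\fq/\fp S_\fq$ as one of them. You have merely spelled out the identification $S_\fq/\fp S_\fq \cong (S \otimes_R k(\fp))_\fq$ more explicitly than the paper does, but the proof is substantively identical.
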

\begin{proof}If we base-change to $k(\fp)$, we see that $k(\fp) \to S \otimes_R k(\fp)$ is finite, meaning that $S \otimes_R k(\fp)$ is a product of artin local rings. $S_{\fq}/\fp S_{\fq}$ is merely one of the factors and hence must also be finite over $k(\fp)$.
\end{proof}

The first of our structure theorems is listed below. We remark that when we say a ring $S$ \emph{essentially} has property $\mathcal{P}$ over $R$ (where $\mathcal{P}$ could be finite-type, finite, smooth, etc.), we shall mean that $S$ is the localization of an $R$-algebra $S'$ having property $\mathcal{P}$.

\begin{lem}\label{structure_zariski}Let $(R,\fm_R, k) \to (A,\fm_A,L)$ be an essentially smooth local homomorphism of Noetherian local rings. Fix $f \in \fm_{A} - \fm_R A$. Then there exists a local $R$-algebra $(B,\fm_B,K)$ such that
\begin{enumerate}
\item $B \to A$ is $\fm_A$-smooth.
\item $B \to A/fA$ is flat and essentially finite.
\item $\dim B = \dim A/fA = \dim A - 1$.
\end{enumerate} 
\end{lem}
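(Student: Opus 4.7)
The plan is to pass to the closed fibre over the residue field $k$ of $R$, invoke Quillen's geometric normalization there, and then lift the construction back to $A$. Since $R \to A$ is essentially smooth, write $A = A'_{\fq}$ with $A'$ smooth of finite type over $R$ containing $f$. The hypothesis $f \notin \fm_R A$ guarantees that the image $\overline{f}$ of $f$ in $\overline{A} := A/\fm_R A$ is a nonzero element of $\fm_{\overline{A}}$; by Lemma \ref{fsmooth_fibre}, $\overline{A}$ is a regular local ring of dimension $e := \dim A - \dim R$. Quillen's normalization lemma \cite[VII.5.12]{Quillen} applied to $(\overline{A}, \overline{f})$ yields elements $\overline{t}_1, \ldots, \overline{t}_{e-1} \in \fm_{\overline{A}}$, algebraically independent over $k$, such that $\overline{B} := k[\overline{t}_1, \ldots, \overline{t}_{e-1}]_{(\overline{t})}$ embeds essentially smoothly (of relative dimension $1$) into $\overline{A}$ and $\overline{A}/\overline{f}\overline{A}$ is a finite $\overline{B}$-module.

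Lift each $\overline{t}_i$ to $t_i \in \fm_A$ and set $B_\circ := R[t_1, \ldots, t_{e-1}] \subset A$, $B := (B_\circ)_{\fp}$ where $\fp := B_\circ \cap \fm_A$. Because the monomials $\overline{t}^{\,I}$ in $\overline{A}$ are $k$-linearly independent, a standard Nakayama argument applied to the finite-dimensional spaces of bounded-degree polynomials (using flatness of $R \to A$) shows that the $t_i$ are algebraically independent over $R$. Hence $B_\circ \cong R[T_1, \ldots, T_{e-1}]$ is a polynomial ring and $\fp = \fm_R B_\circ + (t_1, \ldots, t_{e-1})$ is a maximal ideal of height $\dim R + e - 1 = \dim A - 1$, establishing (3). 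Since $\overline{A}$ is a domain, $\overline{f}$ is a non-zero-divisor, so Lemma \ref{localflatness1} gives that $f$ is a non-zero-divisor on $A$ and that $R \to A/fA$ is flat, yielding $\dim A/fA = \dim A - 1$.

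For (1), the reduction $B \to A$ modulo $\fm_R$ is the essentially smooth map $\overline{B} \to \overline{A}$, in particular flat; combined with the flatness of $R \to A$ and $R \to B$ (the latter being a localization of a polynomial ring), Lemma \ref{localflatness2} gives $B \to A$ flat, and Lemma \ref{fsmooth_fibre} upgrades this to $\fm_A$-smoothness. For (2), the fibre $\overline{B} \to \overline{A}/\overline{f}\overline{A}$ is a finite local homomorphism between a regular and a Cohen-Macaulay local ring of common dimension $e - 1$, hence flat by the miracle flatness theorem; Lemma \ref{localflatness2} then upgrades this to flatness of $B \to A/fA$. The closed fibre of $B \to A/fA$, namely $\overline{A}/(\overline{f}, \overline{t}_1, \ldots, \overline{t}_{e-1})\overline{A}$, is a quotient of the finite $\overline{B}$-module $\overline{A}/\overline{f}\overline{A}$ by $\fm_{\overline{B}}$, hence a finite-dimensional $k$-vector space; so $B \to A/fA$ is quasi-finite at the closed point, and the local form of Zariski's Main Theorem realizes $A/fA$ as the localization of a finite $B$-algebra.

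The main obstacle is this last essential-finiteness step: Quillen's normalization supplies finiteness only after reducing modulo $\fm_R$, and since $A$ is neither assumed Henselian nor $\fm_R$-adically complete, a direct Nakayama-style lift of finite generation is unavailable. Zariski's Main Theorem, which converts quasi-finiteness at a point into a localization of a finite map, is precisely what supplies the needed passage.
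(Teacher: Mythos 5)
Your proposal follows the same strategy as the paper's proof: reduce to the closed fibre $\overline{A}$, invoke Quillen's normalization there, lift to a polynomial ring over $R$ and localize, establish flatness via the local flatness criteria, and obtain essential finiteness via Peskine/Zariski's Main Theorem from quasi-finiteness at the closed point. This is the right outline and the flatness and $\fm_A$-smoothness steps are handled correctly. Two points are worth flagging. First, you assert that Quillen's lemma applied at the local level yields $\overline{A}/\overline{f}\overline{A}$ \emph{finite} over $\overline{B}$; this is not quite right, since Quillen's lemma produces a genuinely finite map $k[\overline{t}_1,\dots,\overline{t}_{e-1}] \to \overline{A'}/f\overline{A'}$ only before localization, and after localizing at $\fp$ one only retains an essentially finite local map --- the obstruction is exactly the same one you identify one level up (no Henselian hypothesis), so your last paragraph slightly misdiagnoses where ZMT first becomes necessary. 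Fortunately this does no damage: miracle flatness uses only the dimension equality, not finiteness, and the finite-dimensionality of the closed fibre of $B \to A/fA$ over $K$ (which is what feeds into ZMT) follows from the pre-localization finite map together with the observation that a localization of a finite $k$-algebra at a maximal ideal is itself finite over $k$ (the paper records this as Lemma \ref{local_finiteness}). Second, you prove that the $t_i$ are algebraically independent over $R$ so as to realize $B_\circ$ as a genuine polynomial subring of $A$ and read off $\dim B$ directly from $\hgt\fp$; this is correct (flatness of $R \to A$ plus $k$-linear independence of the monomials $\overline{t}^I$ does yield $R$-linear independence of the $t^I$ via the isomorphism $\fm_R^j N/\fm_R^{j+1}N \cong (\fm_R^j/\fm_R^{j+1}) \otimes_k (N/\fm_R N)$), but the paper sidesteps it entirely by defining $B' = R[X_1,\dots,X_{r-1}]$ abstractly with an $R$-algebra map to $A'$ and deducing $\dim B = \dim A - 1$ \emph{a posteriori} from the flatness and essential finiteness of $B \to A/fA$, which is cleaner and avoids the extra lemma.
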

\begin{proof}By assumption, $A = A'_{\fq}$ where $A'$ is a smooth (and hence finite-type) $R$-algebra and $\fq \in \Spec A'$. For ease of notation, we shall write $\overline{S} := S/\fm_R S$ for any $R$-algebra $S$. After base-changing to $k$, the residue field of $R$, we have a smooth map $k \to \overline{A'}$. Since $\overline{A'}$ is regular (and hence a product of domains), we may assume --- after replacing $A'$ by $A'_g$ for some $g \notin \fq$ --- that $\overline{A'}$ is a domain. By assumption, $f$ is a non-zerodivisor on $\overline{A'}$, so we may invoke Quillen's normalization lemma \cite[VII.5.12]{Quillen} to obtain a morphism $\phi_{\circ}: B_{\circ} := k[X_1, \cdots, X_{r-1}] \hookrightarrow \overline{A'}$ where $r = \dim \overline{A'}$, $B_{\circ} \to \overline{A'}$ is smooth at $\fq$, and the composite $B_{\circ} \to \overline{A'} \to \overline{A'}/f\overline{A'}$ is finite.

To show that $B_{\circ} \to \overline{A'}/f \overline{A'}$ is flat, it suffices to check locally at maximal ideals of $\overline{A'}/f \overline{A'}$. Fix a maximal ideal $\fn$ of $\overline{A'}/f \overline{A'}$. Since $\overline{A'}$ is a finite-type $k$-domain, all of its maximal ideals have height $r = \dim(\overline{A'})$ \cite[15.6]{Matsumura}; whence $\dim(\overline{A'}/f \overline{A'})_{\fn} = r-1$. Since $B_{\circ} \to \overline{A'}/f \overline{A'}$ is finite, $\fn$ contracts to a maximal prime $\fn_{\circ}$ of $B_{\circ}$, which must therefore also have height $r-1$. Since $B_{\circ} \to \overline{A'}/f \overline{A'}$ is finite, the image of $\fn_{\circ}$ in $(\overline{A'}/f \overline{A'})_{\fn}$ is $\fn$-primary by Lemma \ref{local_finiteness}.  We conclude that $(B_{\circ})_{\fn_{\circ}} \to (\overline{A'}/f \overline{A'})_{\fn}$ flat as its source is regular and target is Cohen-Macaulay \cite[23.1]{Matsumura}.

Put $B' = R[X_1, \cdots, X_{r-1}]$ and define an $R$-algebra map $\phi: B' \to A'$ by mapping the $X_i$ to lifts of the $\phi_{\circ}(X_i) \in \overline{A'}$. Set $\fp = \phi^{-1}(\fq)$ and define the local ring $(B,\fm_B,K)$ to be $B'_{\fp}$. We shall now verify conditions (1)-(3) for $B \to A$.

\textbf{Claim 1:} ($B \to A$ is $\fm_A$-smooth.) First, note that $\overline{B} = (B_{\circ})_{\fp} \to (\overline{A'})_{\fq} = \overline{A}$ is essentially smooth (and hence flat). Since $R \to B \to A$ is flat, we can conclude that $B \to A$ is flat by Lemma \ref{localflatness2}. Moreover, the fibre $K \to A \otimes_B K = \overline{A} \otimes_{\overline{B}} K$ must be geometrically regular, so by Lemma \ref{fsmooth_fibre}, we have the claim.

\textbf{Claim 2:} ($B \to A/fA$ is flat and essentially finite.) Since $R \to A$ is flat and $f$ is a non-zerodivisor on the regular local ring $\overline{A}$, Lemma \ref{localflatness1} says that $R \to A/fA$ is flat and $f$ is a non-zerodivisor on $A$. We already saw that $\overline{B} = (B_{\circ})_{\fp} \to (\overline{A'}f \overline{A'})_{\fq} = \overline{A}/f\overline{A}$ is flat, so $B \to A/fA$ is flat by Lemma \ref{localflatness2}. As $B \to A/fA$ is essentially of finite-type, to show essential finiteness, it will suffice by Peskine's version of Zariski's Main Theorem \cite{Peskine}, \cite[9.1]{Swan}, to show that $K \to A/fA \otimes_B K = (\overline{A'}/f \overline{A'})_{\fq} \otimes_{(B_{\circ})_{\fp}} K$ is finite. This then follows from the finiteness of $B_{\circ} \to \overline{A'}/f\overline{A'}$ and Lemma \ref{local_finiteness}.

\textbf{Claim 3:} ($\dim B = \dim A/fA = \dim A - 1$.) Since $f \in A$ is a non-zerodivisor, we have that $\dim A/fA = \dim A - 1$. As $B \to A/fA$ is flat and essentially finite, both rings must have the same dimension (cf. \cite[15.1]{Matsumura}). 
\end{proof}

If we Henselize, we can arrange for actual finiteness instead of essential finiteness:

\begin{lem}\label{structure_nisnevich}Let $(R,\fm_R, k) \to (A,\fm_A,L)$ be an essentially smooth local homomorphism of Noetherian local rings. Fix $f \in \fm_{A^h} - \fm_R A^h$. Then there exists a local Henselian $R$-algebra $(C,\fm_C,K)$ such that 
\begin{enumerate}
\item $C \to A^h$ is $\fm_{A^h}$-smooth.
\item $C \to A^h/fA^h$ is finite and flat.
\item $\dim C = \dim A^h/fA^h = \dim A - 1$.
\end{enumerate}
\end{lem}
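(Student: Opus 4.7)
The plan is to descend $f$ to a finite stage in the Henselization of $A$, apply Lemma \ref{structure_zariski} at that stage to produce an intermediate ring $B$, and then Henselize $B$ to obtain $C$. Specifically, write $A^h = \varinjlim_{\lambda \in \Lambda} A_\lambda$, where each $A_\lambda$ is the localization of an \'{e}tale $A$-algebra at a prime lying over $\fm_A$ with trivial residue field extension. Each $A_\lambda$ is then essentially smooth over $R$, has dimension $\dim A$, and has residue field $L$; moreover the transition maps are flat and local, hence injective. Choose $\lambda$ large enough that $f \in A_\lambda$. Then $f \in \fm_{A_\lambda}$ automatically, and $f \notin \fm_R A_\lambda$ since otherwise $f \in \fm_R A^h$, contradicting the hypothesis.

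Apply Lemma \ref{structure_zariski} to $R \to A_\lambda$ and $f \in \fm_{A_\lambda} - \fm_R A_\lambda$ to obtain a local $R$-algebra $B$ satisfying: $B \to A_\lambda$ is $\fm_{A_\lambda}$-smooth, $B \to A_\lambda/fA_\lambda$ is flat and essentially finite, and $\dim B = \dim A_\lambda/fA_\lambda = \dim A - 1$. Set $C := B^h$; the universal property of Henselization extends the local map $B \to A_\lambda \to A^h$ uniquely to a local map $C \to A^h$.

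For condition (1), the composite $B \to A_\lambda \to A^h$ is a composition of a $\fm_{A_\lambda}$-smooth map with a filtered colimit of essentially \'{e}tale maps, each of which is geometrically regular; by base change, the closed fibre $A^h/\fm_B A^h$ is geometrically regular over the residue field $K$ of $B$. Applying Lemma \ref{localflatness2} with $I = \fm_B$ (noting $\fm_B B^h = \fm_{B^h} \subset \fm_{A^h}$, that $B \to A^h$ is flat, and that $K = B^h/\fm_B B^h \to A^h/\fm_B A^h$ is flat since $K$ is a field) yields flatness of $B^h \to A^h$; combined with Lemma \ref{fsmooth_fibre}, this gives $\fm_{A^h}$-smoothness. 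For condition (2), Henselization commutes with quotients, so $(A_\lambda/fA_\lambda)^h = A^h/fA^h$. Write $A_\lambda/fA_\lambda = S'_\fq$ for a finite $B$-algebra $S'$ and maximal ideal $\fq \subset S'$; then $S' \otimes_B B^h$ is a finite $B^h$-algebra and, since $B^h$ is Henselian, decomposes as a product of Henselian local rings, one factor per maximal ideal of $S'$. The factor corresponding to $\fq$ is $A^h/fA^h$ by the universal property of Henselization, and is therefore finite over $C = B^h$; flatness again follows from Lemma \ref{localflatness2}. For condition (3), Henselization preserves Krull dimension, so $\dim C = \dim B = \dim A - 1$; the fibre $A^h/\fm_R A^h = (A/\fm_R A)^h$ is regular (regularity passes to Henselizations), so $f$ is a nonzerodivisor on this fibre and, by Lemma \ref{localflatness1}, on $A^h$, giving $\dim A^h/fA^h = \dim A - 1$.

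The main technical obstacle is the finiteness claim in condition (2): identifying the Henselization $(A_\lambda/fA_\lambda)^h$ with the correct Henselian local direct factor of the base change $S' \otimes_B B^h$. This requires carefully tracking the decomposition of a finite algebra over a Henselian local base into its Henselian components, keeping track of the residue field extension $K \subset L$, and invoking the universal property of Henselization to pick out the correct factor.
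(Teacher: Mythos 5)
Your overall outline matches the paper's: reduce $f$ to a finite stage $A_\lambda$ of the Henselization colimit, apply Lemma \ref{structure_zariski} to $R \to A_\lambda$ to obtain $B$, set $C = B^h$, and verify the three claims using the flatness criteria and Lemma \ref{fsmooth_fibre}. Claims (1) and (3) are handled the same way as in the paper (modulo a slightly muddled phrasing of why the closed fibre of $B \to A^h$ is geometrically regular; the cleaner route is just to note that $B \to A_\lambda \to A^h$ is a composite of formally smooth maps and then invoke Lemma \ref{fsmooth_fibre} once).

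The one place where your argument is thinner than it should be is the finiteness part of condition (2). You assert that the direct factor of $S' \otimes_B B^h$ corresponding to $\fq$ \emph{is} $(A_\lambda/fA_\lambda)^h = A^h/fA^h$ ``by the universal property of Henselization.'' The universal property only hands you a canonical local map $(S'_\fq)^h \to T$ into the factor $T$; it does not by itself tell you this map is an isomorphism. The stronger statement you are implicitly using --- that Henselization commutes with quasi-finite (in particular finite) base change, so the factor really is the Henselization --- is true but is a nontrivial theorem in its own right and deserves either a proof or a citation. The paper's own proof avoids needing this identification entirely: it only uses the easy direction of the universal property to build the map $A^h/fA^h \to (D \otimes_B C)_{\fn'}$, shows that map is faithfully flat (hence injective) via Lemma \ref{localflatness2}, and then concludes that $A^h/fA^h$ is finite over $C$ because it is a $C$-submodule of the finite $C$-module $(D \otimes_B C)_{\fn'}$ and $C = B^h$ is Noetherian. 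That detour through injectivity plus Noetherianness is the clever step here, and it is precisely what lets the paper get by without invoking the full compatibility of Henselization with finite morphisms. If you want to keep your version of the argument, you should explicitly cite or prove that compatibility; otherwise, the paper's weaker-but-sufficient argument is cleaner.
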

\begin{proof}We can write $A^h = \fcolim A_{\lambda}$ where $\left\{A_\lambda\right\}_{\lambda \in \Lambda}$ is a filtered system of local, essentially \'{e}tale $A$-algebras with residue field $L$ (see, for example, \cite[\S I.4]{Milne}). We must therefore have $f \in A_{\lambda} - \fm_{R}A_{\lambda}$ for some $\lambda$; so we might as well assume that $f \in \fm_A - \fm_R A$ to begin with. Let $(B, \fm_B, K) \to (A, \fm_A, L)$ be as in Lemma \ref{structure_zariski}. We therefore obtain a natural map $B^h \to A^h$; put $(C, \fm_C, K) = (B^h, \fm_B B^h, K)$.

\textbf{Claim 1:} ($C \to A^h$ is $\fm_{A^h}$-smooth.) $B \to A \to A^h$ is obviously $\fm_{A^h}$-smooth. If we look at $B \to C \to A^h$, then by using Lemma \ref{localflatness2}, the flatness of $C \to A^h$ follows from the flatness of $K = C/\fm_B C \to A^h/\fm_B A^h$. Since $B \to A^h$ and $C = B^h \to A^h$ have the same special fibre, we can conclude that $C \to A^h$ is $\fm_{A^h}$-smooth by Lemma \ref{fsmooth_fibre}.

\textbf{Claim 2:} ($C \to A^h/fA^h$ is finite and flat.) For flatness, we appeal to Lemma \ref{localflatness2} and the observations that $B \to A/fA \to (A/fA)^h = A^h/fA^h$ and $K = C/\fm_B C \to (A^h/fA^h)/\fm_B (A^h/fA^h)$ are both flat. Next, note that $B \to A/fA$ is essentially finite; that is, there exists a finite (and hence semilocal) $B$-algebra $D$ with $A/fA = D_{\fn}$ for some maximal ideal $\fn$ of $D$. Take $B \to D$ and base-change to $C = B^h$. $D \otimes_B C$ is finite over the Henselian ring $C$ and so factors into a product of Henselian local rings that are finite over $C$ \cite[I.4.2(b)]{Milne}. Since the special fibre of $B \to C$ is an isomorphism, the maximal ideals of $D$ and $D \otimes_{B} C$ are in bijective correspondence. Choose the unique maximal ideal $\fn'$ of $D \otimes_B C$ lying over $\fn$. Since $(D \otimes_{B} C)_{\fn'}$ is simply a projection onto one of the factors, the composite $C \to D \otimes_B C \to (D \otimes_B C)_{\fn'}$ is finite. On the other hand, by the universal property of Henselization, the flat map $A/fA = D_{\fn} \to (D \otimes_B C)_{\fn'}$ extends uniquely to $A^h/fA^h = (A/fA)^h \to (D \otimes_B C)_{\fn'}$. By Lemma \ref{localflatness2}, $A^h/fA^h \to (D \otimes_B C)_{\fn'}$ is faithfully flat and therefore injective; therefore, $C \to A^h/fA^h$ must be finite as the composite $C \to A^h/fA^h \to (D \otimes_B C)_{\fn'}$ is.

\textbf{Claim 3:} ($\dim C = \dim A^h/fA^h = \dim A - 1$.) As $f$ is a non-zerodivisor on $A$ (see the proof of Lemma \ref{structure_zariski}), it must also be a non-zerodivisor on $A^h$. The claim now immediately follows from the preceding two claims.
\end{proof}

In the complete case we can obtain an analogue of Lemma \ref{structure_nisnevich} without any finite-type hypotheses.
\begin{lem}\label{structure_complete}Let $(R, \fm_R, k) \to (A, \fm_A, L)$ be a flat map of Noetherian local rings with $A$ complete. Suppose that $k \to L$ is geometrically regular and that $A/\fm_R A$ is regular. Fix $f \in A - \fm_R A$ Then there exists a local $R$-subalgebra $(B, \fm_B, L)$ of $A$ such that 
\begin{enumerate}
\item $B \to A$ is $\fm_A$-smooth.
\item $B \to A/fA$ is finite and flat.
\item $\dim B = \dim A/fA = \dim A - 1$.
\end{enumerate}
\end{lem}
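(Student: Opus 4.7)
The approach is to mimic the strategy of Lemmas \ref{structure_zariski} and \ref{structure_nisnevich}, replacing Quillen's Noether normalization lemma with Cohen's structure theorem together with Weierstrass preparation. The fibre $\bar{A} := A/\fm_R A$ is a complete regular local ring of dimension $d := \dim A - \dim R$ with residue field $L$. Because the local map $R \to A$ induces a field extension $k \hookrightarrow L$ on residues, $k$ and $L$ share a characteristic, so $\bar{A}$ is equicharacteristic, and Cohen's structure theorem gives $\bar{A} \cong L[[T_1, \cdots, T_d]]$. Since $f \notin \fm_R A$, the image $\bar{f}$ is a nonzero, hence non-zerodividing, element of this regular local ring. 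After an $L$-algebra automorphism of $L[[T_1, \cdots, T_d]]$ arranging $\bar{f}$ to be regular in $T_d$, Weierstrass preparation expresses $\bar{f} = u \cdot g$ where $u$ is a unit and $g \in L[[T_1, \cdots, T_{d-1}]][T_d]$ is a distinguished polynomial of some degree $n \geq 1$; in particular, $\bar{A}/\bar{f}$ is free of rank $n$ over the subring $\bar{B} := L[[T_1, \cdots, T_{d-1}]]$.

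The crux of the proof is to lift the inclusion $\bar{B} \hookrightarrow \bar{A}$ to a Noetherian local inclusion $B \hookrightarrow A$ whose residue field is $L$. To this end, I first construct a Noetherian local subring $\tilde{R} \subset A$ containing $R$ such that $R \to \tilde{R}$ is $\fm_{\tilde{R}}$-smooth, $\tilde{R}/\fm_R \tilde{R} \cong L$, and $\dim \tilde{R} = \dim R$. Such a ``relative coefficient ring'' exists because $A$ is complete and $k \to L$ is geometrically regular: one lifts a coefficient field of $\bar{A}$ to a formally smooth $R$-subalgebra of $A$ via the lifting criterion for formal smoothness. Picking lifts $t_i \in \fm_A$ of the $T_i \in \bar{A}$, I then define $B \subset A$ as the image of the continuous evaluation map
\[ \tilde{R}[[X_1, \cdots, X_{d-1}]] \longrightarrow A, \qquad X_i \mapsto t_i, \]
which is well-defined because $A$ is $\fm_A$-adically complete.

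Condition (1) will follow from Lemma \ref{fsmooth_fibre}: the fibre $A/\fm_B A \cong L[[T_d]]$ is a 1-dimensional regular local ring, manifestly geometrically regular over $L$, and flatness of $B \to A$ comes from Lemma \ref{localflatness2} since $B/\fm_R B \cong L[[X_1, \cdots, X_{d-1}]] \hookrightarrow \bar{A}$ is flat. For (2), flatness of $B \to A/fA$ again comes from Lemma \ref{localflatness2} via the free extension $\bar{B} \hookrightarrow \bar{A}/\bar{f}$; for finiteness, $(A/fA)/\fm_B(A/fA) \cong L[T_d]/(T_d^n)$ is finite over $L$, and the Weierstrass relation $g(T_d) \equiv 0 \pmod{\fm_B(A/fA)}$ forces the $\fm_A$- and $\fm_B$-adic topologies on $A/fA$ to coincide, making $A/fA$ an $\fm_B$-adically complete $B$-module to which the complete Nakayama lemma applies. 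Finally, for (3), $f$ is a non-zerodivisor on $A$ by Lemma \ref{localflatness1} (since $\bar{f}$ is one on $\bar{A}$), so $\dim A/fA = \dim A - 1$, while $\dim B = \dim \tilde{R} + (d-1) = \dim R + d - 1 = \dim A - 1$.

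\emph{The main obstacle} is the construction of the relative coefficient subring $\tilde{R}$ in the middle step, which combines the completeness of $A$ with the geometric regularity of $k \to L$ in a nontrivial way: one must produce, inside the complete ring $A$, a Noetherian local formally smooth extension of $R$ realizing the residue field $L$. Once $\tilde{R}$ is in hand, the remaining verifications are routine applications of the preceding flatness lemmas, Weierstrass preparation, and complete Nakayama.
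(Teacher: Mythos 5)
Your proposal is correct and follows the same overall strategy as the paper's proof. Both hinge on the same hard step: producing inside $A$ a ``coefficient subring'' (your $\tilde{R}$, the paper's $R'$), flat over $R$ with special fibre $L$, by exploiting the $\fm_A$-adic completeness of $A$ together with the formal smoothness of $R \to R'$ (which Lemma \ref{fsmooth_fibre} extracts from the hypothesis that $k \to L$ is geometrically regular). You correctly flag this as the crux; the paper executes it by first building $R'$ abstractly via \cite[10.3.1]{EGAIII} and then inductively lifting $\psi_n\colon R' \to A/\fm_A^n$ through the nilpotent surjections $A/\fm_A^{n+1} \to A/\fm_A^n$, passing to the limit, and observing that the resulting map is flat (hence injective) by Lemma \ref{localflatness2}. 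Once the two accounts of this step are aligned, the remaining verifications of (1)--(3) are the same applications of Lemmas \ref{localflatness1}, \ref{localflatness2}, \ref{fsmooth_fibre}.

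The one genuine divergence is how you each produce the $d-1$ remaining parameters on the fibre $\overline{A} = A/\fm_R A$. The paper avoids Cohen's theorem entirely and simply runs a prime-avoidance argument to choose $g_1, \ldots, g_{d-1} \in A$ such that $\overline{A}/(g_1,\ldots,g_{d-1})$ is regular and $f, g_1, \ldots, g_{d-1}$ is a regular sequence on $\overline{A}$, which immediately yields an Artinian special fibre for $\overline{B} \to \overline{A}/\overline{f}\overline{A}$. You instead invoke Cohen's structure theorem to write $\overline{A} \cong L[[T_1,\ldots,T_d]]$, make $\overline{f}$ regular in $T_d$ by a (possibly nonlinear, if $L$ is finite) automorphism, and apply Weierstrass preparation. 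Your route is a bit more explicit --- it exhibits $\overline{A}/\overline{f}$ as a free $\overline{B}$-module of computable rank and makes the coincidence of the $\fm_A$- and $\fm_B$-adic topologies on $A/fA$ visible from the distinguished polynomial --- but it requires the preliminary coordinate normalization, which the paper's prime-avoidance argument sidesteps. Either is fine; both produce $B \cong \tilde{R}[[X_1,\ldots,X_{d-1}]] \hookrightarrow A$ with the stated properties.
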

\begin{proof}By \cite[10.3.1]{EGAIII}, we may construct an over-ring $(R',\fm_{R'},L)$ of $R$ such that $R \stackrel{u}{\longrightarrow} R'$ is flat and $R' \otimes_R k \cong L$ as $k$-algebras. We may also take $R'$ to be complete. Using Lemma \ref{fsmooth_fibre}, we see that $R \to R'$ is $\fm_{R'}$-smooth. If we define $\phi_n$ to be the composite $R \to A \to A/\fm_A^n$ and let $\psi_1:R' \to R'/\fm_{R'} = L = A/\fm_A$ be the canonical quotient map, we then get $\phi_1 = \psi_1 \circ u$. Assuming that we have inductively constructed $\psi_n: R' \to A/\fm_A^n$ so that $\phi_n = \psi_n \circ u$, invoking the formal-smoothness of $u$ gives a lift
\[
\xymatrix{
R \ar^{\phi_{n+1}}[r] \ar_{u}[d]  & A/\fm_A^{n+1} \ar[d] \\
R' \ar_{\psi_{n}}[r]     \ar^{\psi_{n+1}}@{.>}[ru]  & A/\fm_A^n 
}.
\]
Since $A$ is complete, we therefore obtain an $R$-algebra map $\ds R' \to \varprojlim_{n}A/\fm_A^n = A$. Note that $\fm_{R'}A = \fm_{R}A$, so $\overline{A} := A/\fm_{R'}A$ is a regular local ring. By assumption, $f$ is nonzero on $A/\fm_{R'}A$ and so is a parameter. If $d = \dim \overline{A}$, we can use a prime-avoidance argument (see, for example, \cite[2.6]{Koszul}) and choose elements $g_1, \cdots, g_{d-1} \in A$ such that $\overline{A}/(g_1, \cdots, g_{d-1})\overline{A}$ is regular with $f,g_1, \cdots, g_{d-1}$ a regular sequence on $\overline{A}$. Define $B := R'[[X_1, \cdots X_{d-1}]] \to A$ by mapping $X_i$ to $g_i$.

\textbf{Claim 1:} ($B \to A$ is $\fm_A$-smooth.) Consider $R \to B \to A$. Base-changing to $k = R/\fm_R$ gives $k \to \overline{B} = L[[X_1, \cdots, X_{d-1}]] \to \overline{A}$. $\overline{B} \to \overline{A}$ is flat since both rings are regular and the $X_i$ map to a regular sequence on $\overline{A}$ \cite[23.1]{Matsumura}. Therefore, $B \to A$ is flat by Lemma \ref{localflatness2}. The special fibre of $B \to A$ is $L \to A/\fm_{B}A = \overline{A}/(g_1, \cdots, g_{d-1})\overline{A}$. Since $A/\fm_{B}A$ is regular with residue field $L$, the map $L \to A/\fm_{B}$ is geometrically regular by \cite[\S 28, Lem. 1]{Matsumura}; and hence, we may conclude that $B \to A$ is $\fm_{A}$-smooth by Lemma \ref{fsmooth_fibre}.

\textbf{Claim 2:} ($B \to A/fA$ is finite and flat.) Note $A/fA \otimes_B L = \overline{A}/(f,g_1, \cdots, g_{d-1})\overline{A}$ is zero-dimensional and hence finite over $L = B/\fm_B$ (since $A$ and $B$ share the same residue field.) Since both rings are also complete, $B \to A/fA$ is finite by \cite[8.4]{Matsumura}. Since $f$ is a non-zerodivisor on $\overline{A}$ and $R \to A$ is flat, Lemma \ref{localflatness1} says that $R \to B \to A/fA$ is flat and $f$ is regular on $A$. On the other hand, $\overline{B} \to \overline{A}/f\overline{A}$ is flat since $g_1, \cdots g_{d-1}$ is a regular sequence on the CM ring $\overline{A}/f \overline{A}$; so by Lemma \ref{localflatness2}, $B \to A/fA$ is flat.

\textbf{Claim 3:} ($\dim B = \dim A/fA = \dim A - 1$.) This follows at once from the fact that $f$ is regular on $A$ and the finite flatness of $B \to A/fA$.
\end{proof}

\subsection{Local Triviality of Coniveau} Using our normalization lemmas, we are now poised to prove a special case of our main theorem:
\begin{prop}\label{main_thm_weak}Let $(R,\fm_R,k)$ be a Noetherian local ring and let $(A,\fm_A,L)$ be a local Noetherian $R$-algebra. Fix some $f \in \fm_A - \fm_R A$ and suppose that \textbf{either}:
\begin{enumerate}
\item[(a)] $A$ is the Henselization of a local, essentially smooth $R$-algebra.
\item[(b)] $A$ is complete and $\fm_A$-smooth over $R$ with $k \to L$ geometrically regular.
\end{enumerate}
Then the natural inclusion $\mathbf{i}:\mcm^p(A/fA) \to \mcm^p(A)$ induces the zero map on $K$-theory for all $p \geq 0$.
\end{prop}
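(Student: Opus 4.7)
The plan is to reduce to a situation resembling a smooth morphism with a section, then apply Quillen's additivity theorem to a Koszul-type short exact sequence of exact functors.

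First I apply Lemma \ref{structure_nisnevich} (case (a)) or Lemma \ref{structure_complete} (case (b)) to produce a local Noetherian $R$-algebra $(B,\fm_B,K)$ with $B \to A$ being $\fm_A$-smooth and $B \to A/fA$ finite and flat, with $\dim B = \dim A - 1$. I then form $A \otimes_B A/fA$, which is finite and flat over $A$ via $q_1: a \mapsto a \otimes 1$, admits a ring inclusion $\iota: A/fA \to A \otimes_B A/fA$ given by $\bar{a} \mapsto 1 \otimes \bar{a}$, and has a retraction $\mu$ of $\iota$ given by multiplication. Since $A$ is Henselian (resp.\ complete), $A \otimes_B A/fA$ splits as a finite product of Henselian (resp.\ complete) local $A$-algebras; let $\widetilde{A}$ be the unique factor through which $\mu$ factors, and let $\widetilde{\iota}, \widetilde{\mu}$ denote the induced maps. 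Then $q_1$ projects to a finite flat local map $\widetilde{q}_1: A \to \widetilde{A}$ with $\widetilde{\mu} \circ \widetilde{q}_1$ equal to the quotient $A \to A/fA$ underlying $\mathbf{i}$; moreover, $\widetilde{\iota}$ inherits flatness (as a direct summand of the flat $A/fA$-algebra $A \otimes_B A/fA$) and its closed fibre is a geometrically regular local $L$-algebra of dimension $1$ (as a factor of $A \otimes_B L$, which is geometrically regular by base change from the $\fm_A$-smooth map $B \to A$), so by Lemma \ref{fsmooth_fibre}, $\widetilde{\iota}$ is $\fm_{\widetilde{A}}$-smooth of relative dimension $1$.

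Next, Lemma \ref{section} yields $\ker(\widetilde{\mu}) = (g)$ for some non-zerodivisor $g \in \widetilde{A}$. For $M \in \mcm^p(A/fA)$, tensoring the short exact sequence of $A/fA$-modules $0 \to \widetilde{A} \stackrel{\cdot g}{\to} \widetilde{A} \to A/fA \to 0$ (with $\widetilde{A}$ regarded as an $A/fA$-module via $\widetilde{\iota}$ and the rightmost term identified via $\widetilde{\mu}$) with $M$ over $A/fA$ gives a short exact sequence of $\widetilde{A}$-modules
\[ 0 \to M \otimes_{A/fA} \widetilde{A} \stackrel{\cdot g}{\longrightarrow} M \otimes_{A/fA} \widetilde{A} \longrightarrow \Gamma_* M \to 0, \]
where $\Gamma_* M$ denotes $M$ regarded as an $\widetilde{A}$-module via $\widetilde{\mu}$; left-exactness follows from $\Tor_1^{A/fA}(A/fA, M) = 0$. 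The flanking terms lie in $\mcm^p(\widetilde{A})$ (flat pullback along $\widetilde{\iota}$ preserves codimension of support), and $\Gamma_* M$ lies in $\mcm^{p+1}(\widetilde{A}) \subset \mcm^p(\widetilde{A})$ since $\Spec A/fA \hookrightarrow \Spec \widetilde{A}$ has codimension one. Viewing this as a short exact sequence of exact functors $\mcm^p(A/fA) \to \mcm^p(\widetilde{A})$ whose two flanking functors coincide, Quillen's additivity theorem yields $[\Gamma_*] = 0$ on $K$-theory. Composing with the finite pushforward $(\widetilde{q}_1)_*$, which is exact and preserves the coniveau filtration by Lemma \ref{finite_coniveau}, and observing that $\mathbf{i}_* = (\widetilde{q}_1)_* \circ \Gamma_*$ on modules, we conclude $\mathbf{i}_* = 0$.

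The main obstacle is the decomposition step: verifying that the chosen local factor $\widetilde{A}$ of $A \otimes_B A/fA$ genuinely inherits $\fm_{\widetilde{A}}$-smoothness over $A/fA$ of relative dimension exactly $1$, so that Lemma \ref{section} furnishes a single regular generator for $\ker(\widetilde{\mu})$. This amounts to reconciling the idempotent decomposition of the finite Henselian/complete $A$-algebra $A \otimes_B A/fA$ with the flatness and closed-fibre geometric regularity inherited by base change from the $\fm_A$-smoothness of $B \to A$.
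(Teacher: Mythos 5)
Your proof is correct and follows essentially the same route as the paper: apply Lemma \ref{structure_nisnevich} or \ref{structure_complete}, form $D = A \otimes_B A/fA$, and realize $\mathbf{i}$ as the cokernel in a Koszul-type short exact sequence of exact functors to invoke Quillen's additivity theorem. The only cosmetic difference is that you pass to the local factor $\widetilde{A}$ of $D$ through which the multiplication map factors, while the paper works with the entire semilocal ring $D$ and deduces that the kernel ideal $I$ is free from the fact that it is locally free of rank one (Lemma \ref{section} applied at the distinguished maximal ideal, trivially free at the others) — both routes rely on the same ingredients and are equally valid.
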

\begin{proof}In either case, Lemma \ref{structure_nisnevich} or \ref{structure_complete} will give us a local $R$-algebra $(B,\fm_B,K)$ and a map $B \to A$ such the following hold:
\begin{enumerate}
\item $B \to A$ is $\fm_A$-smooth.
\item $B \to A/fA$ is finite and flat.
\item $\dim B = \dim A/fA = \dim A - 1$.
\end{enumerate}
Let $D = A \otimes_{B} A/fA$. By base-change, $D$ is $\fm_A D$-smooth over $A/fA$ as well as flat and finite over $A$. The map $a \otimes b \mapsto ab$ gives a surjection $r:D \to A/fA$. Let $I$ be its kernel. We claim that $I \cong D$ as modules over $D$. Since $A/fA$ is local, $I$ is contained in a unique maximal ideal $\fn \subset D$; so for any other maximal ideal $\fn'$, $I_{\fn'} = D_{\fn'}$. On the other hand, $A/fA \to D_{\fn} \stackrel{r}{\longrightarrow} A/fA$ is the identity, so since $A/fA \to D_{\fn}$ is $\fn D_{\fn}$-smooth, Lemma \ref{section} assures us that $I_{\fn}$ is generated by a regular sequence of length one --- that is, $I_{\fn} \cong D_{\fn}$. $D$, being finite over a local ring, is therefore semilocal, so the rank-one locally free module $I$ must be free. (Indeed, if we let $J$ be the Jacobson radical of $D$, then by the Chinese Remainder Theorem, $D/JD$ is a finite product of fields. It easily follows that $I/JI$ must be free over $D/JD$; whence $I$ is free by Nakayama's Lemma.)

One then obtains a short exact sequence of $D$-modules
\begin{equation}\tag{*}
0 \to D \to D \stackrel{r}{\longrightarrow} A/fA \to 0.
\end{equation}

Given a module $M \in \mcm^p(A/fA)$, consider the base-change functor 
\[ \mbf: \mcm^p(A/fA) \to \mcm^p(D) \mbox{ given by } M \mapsto M \otimes_{A/fA} D .\]
Since $A/fA \to D$ is flat, $\mbf$ is exact and preserves the codimensional filtration \cite[VII.5.2]{Quillen}. Since $A$ is finite over $D$ the forgetful functor $\mcm(D) \to \mcm(A)$ also preserves the codimensional filtration (Lemma \ref{finite_coniveau}) and so gives an exact functor $\mathbf{u}:\mcm^p(D) \to \mcm^p(A)$. Since tensoring over $A/fA$ will preserve the exactness of (*) we obtain a short exact sequence of exact functors $\mcm^{p}(A/fA) \to \mcm^p(A)$
\[ 0 \to \mathbf{u}\mathbf{F} \to \mathbf{u}\mathbf{F} \to \mathbf{i} \to 0 \]
where $\mathbf{i}$ is the natural inclusion. By \cite[III, Cor. 1]{Quillen}, $(\mathbf{u}\mathbf{F})_* = (\mathbf{u}\mathbf{F})_* + \mathbf{i}_*$ as homomorphisms $K_n(\mcm^p(A/fA)) \to K_n(\mcm^p(A))$ and the theorem follows.
\end{proof}

\begin{rem}Note that if $R$ is complete, then $A = R[[X_1, \cdots X_n]]$ is readily seen to satisfy the hypotheses of condition (b). In this way, Proposition \ref{main_thm_weak} may be seen as a generalization of the result of \cite{ReidSherman}.
\end{rem}

Before we address acyclicity results for the Gersten complex we shall need some basic lemmas about prime-avoidance.
\begin{lem}\label{avoidance}Let $(R,\fm_R,k) \to (A,\fm_A,L)$ be a flat, local map of Noetherian local rings such that $\fq := \fm_R A$ is prime.  Then the following statements hold:
\begin{enumerate}
\item[(i)] For $p \geq \dim R$, $\ds \mcm^{p+1}(A) = \varinjlim_{\mathclap{f \in \fm_A - \fq}}\mcm^p(A/fA)$.
\item[(ii)] Let $S \subset R$ be a multiplicative system so that $S \cap \fm_R \neq \emptyset$. Then for $p \geq \dim R - 1$, there is an equality $\ds \mcm^{p+1}(S^{-1}A) = \varinjlim_{\mathclap{f \in \fm_A - \fq}}\mcm^p(S^{-1}(A/fA))$.
\end{enumerate}
\end{lem}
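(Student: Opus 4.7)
Both inclusions in (i) and (ii) rest on simple prime-avoidance arguments once two preliminary facts are recorded. First, every $f \in \fm_A - \fq$ is a non-zerodivisor on $A$: since $\fq = \fm_R A$ is prime, the reduction $A/\fq = A \otimes_R k$ is a domain on which the image of $f$ is a non-zerodivisor, and Lemma \ref{localflatness1} lifts this to $A$. Second, flatness of $R \to A$ gives $\hgt_A \fq = \dim R$. The set $\fm_A - \fq$ is directed under $f \le g \iff (g) \subset (f)$, with $fg$ serving as a common upper bound, because $\fq$ prime forces $fg \notin \fq$. The inclusion $\supset$ in both parts then follows by Krull's Hauptidealsatz: for any $\fp \in \Supp_A M$ containing a non-zerodivisor $f$, one has $\hgt_A \fp = \hgt_{A/fA}(\fp/fA) + 1$, and the computation survives localization by $S$.

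For $\subset$ in (i), let $0 \neq M \in \mcm^{p+1}(A)$. The minimal primes of $\Supp_A M$ all have $A$-height at least $p+1 > \dim R = \hgt \fq$, so none lies in $\fq$; hence $\ann_A M \not\subset \fq$. Since $M$ is nonzero, $\ann_A M \subset \fm_A$, and prime avoidance delivers some $f \in \ann_A M \cap (\fm_A - \fq)$. Then $fM = 0$, and the Krull estimate above places $M$ in $\mcm^p(A/fA)$.

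For $\subset$ in (ii), the module $M$ lives over $S^{-1}A$ but we need $f \in A$, so I would first descend to $A$. Pick a finitely generated $A$-submodule $N \subset M$ with $S^{-1}N = M$ and let $N' := N/N_S$, where $N_S$ is the $S$-torsion. Then $N' \hookrightarrow S^{-1}N' = M$, every $\fp \in \Ass_A N'$ is disjoint from $S$, and under the bijection between $S$-disjoint primes of $A$ and primes of $S^{-1}A$ these are precisely the primes underlying $\Ass_{S^{-1}A} M$; hence $\hgt_A \fp \geq p+1$ for each such $\fp$. It then suffices to produce $f \in \ann_A N' \cap (\fm_A - \fq)$, for then $fM = S^{-1}(fN') = 0$ and the Krull estimate gives $M \in \mcm^p(S^{-1}(A/fA))$. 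The main obstacle is the boundary case $p = \dim R - 1$: a prime $\fp \in \Ass_A N'$ now may have the same height $\dim R$ as $\fq$, so the containment $\fp \subset \fq$ would force $\fp = \fq$ and is not excluded on height grounds alone. The hypothesis $S \cap \fm_R \neq \emptyset$ resolves this: any such $s$ lies in $\fm_R A = \fq$, so $\fq$ meets $S$ while $\fp$ does not, ruling out $\fp = \fq$. With this case handled, $\ann_A N' \not\subset \fq$, and prime avoidance concludes the argument just as in (i).
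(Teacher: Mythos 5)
Your proof is correct, and part (i) is essentially the paper's argument: show $M_{\fq} = 0$ using $\hgt_A \fq = \dim R$, then pick an annihilator $f \notin \fq$. You do make explicit two things the paper glosses over --- that the system $\left\{f \in \fm_A - \fq\right\}$ is directed (via $f, g \mapsto fg$, using primality of $\fq$), and the easy containment $\varinjlim \mcm^p(A/fA) \subset \mcm^{p+1}(A)$ via the Krull height estimate --- which is a welcome clarification. One small quibble: invoking ``prime avoidance'' to extract $f$ from $\ann_A M \setminus \fq$ is an overstatement; $\ann_A M \not\subset \fq$ already gives such an $f$, and $\ann_A M \subset \fm_A$ is automatic for a nonzero finitely generated module over the local ring $A$.

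In part (ii) you take a genuinely different route from the paper. The paper works directly in $S^{-1}A$: take the minimal primes $\widetilde{\fp}_1, \dots, \widetilde{\fp}_m$ of $\Supp M$, show each preimage $\fp_i$ fails to lie in $\fq$ (the boundary case $\hgt \fp_i = \dim R$ being handled exactly as you handle it, via $\fq \cap S \neq \emptyset$), choose $f_i \in \fp_i - \fq$, and set $f = (f_1\cdots f_m)^c$ for $c \gg 0$, using the Noetherian radical fact that $(\widetilde{\fp}_1 \cdots \widetilde{\fp}_m)^c \subset \ann_{S^{-1}A}(M)$. You instead descend $M$ to a finitely generated $A$-submodule $N$ with $S^{-1}N = M$ and extract $f$ from $\ann_A N$. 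This is perfectly valid, and the two arguments are of comparable weight: yours relies on the correspondence between $\Ass_A N$ and $\Ass_{S^{-1}A} M$ and the fact that $\fq \in V(\ann_A N)$ would force $\fq$ to contain a minimal prime of $\Supp_A N$; the paper's relies on the radical-power trick. Neither is shorter. One redundancy in yours: the passage to $N' := N/N_S$ is unnecessary, since $N \subset M$ is a submodule of a module over $S^{-1}A$, on which every element of $S$ acts invertibly, so $N$ has no $S$-torsion and $N' = N$ automatically.
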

\begin{proof}(i) Since $R \to A_{\fq}$ is flat, it follows that $\fq$ has height equal to $\dim R$. Thus, if $p \geq \dim R$, any $M \in \mcm^{p+1}(A)$ must vanish at $\fq$ and hence be annihilated by some $f \in \fm_A - \fq$. Since $f \in \fm_A-\fq$ is a non-zerodivisor on $A$ by Lemma \ref{localflatness1}, we have $M \in \mcm^p(A/fA)$.

(ii) Assume that $p \geq \dim R - 1$ and fix some $M \in \mcm^{p+1}(S^{-1}A)$. Let $\widetilde{\fp_1}, \cdots, \widetilde{\fp_m} \in \Spec S^{-1}A$ be the minimal primes of $M$ and let $\fp_i$ be their preimages in $A$. We claim that $\fp_i \not\subset \fq$. By assumption, $\dim A_{\fp_i} = \dim (S^{-1}A)_{\widetilde{\fp_i}} \geq p+1 \geq \dim R$. Thus, if it were the case that $\fp_i \subset \fq$, then we would have that $\fq = \fp_i$ since $\hgt \fq = \dim R$. This is impossible since $S \cap \fp_i = \emptyset$. Note that the support $\Supp M \subset \Spec S^{-1}A$ is the union of the vanishing sets of its minimal primes:
\[ V(\operatorname{ann(M)}) = \Supp M = V(\widetilde{\fp_1}) \cup V(\widetilde{\fp_2}) \cup \cdots \cup V(\widetilde{\fp_m}) = V(\widetilde{\fp_1} \widetilde{\fp_2} \cdots \widetilde{\fp_m}). \]
In other words, the product $\widetilde{\fp_1} \widetilde{\fp_2} \cdots \widetilde{\fp_m}$ and $\ann(M)$ agree up to radical, so for some $c >> 0$, $(\widetilde{\fp_1} \widetilde{\fp_2} \cdots \widetilde{\fp_m})^c \subset \ann(M)$. Choose $f_i \in \fp_i - \fq$ and put $f = (f_1 f_2 \cdots f_m)^c$. Its image in $S^{-1}A$ must then lie in the annihilator of $M$. Since $f \in \fm_A - \fq$, it must be a non-zerodivisor on $A$ and hence on $S^{-1}A$ as in part (i). Thus, $M \in \mcm^p(S^{-1}(A/fA))$ as desired.
\end{proof}

For any $A$ satisfying the hypotheses of Proposition \ref{main_thm_weak}, we are now poised to prove he following acyclicity result on its Gersten complex $\mcg_n^{\cdot}(A)$:

\begin{cor}\label{complex_vanish}Let $(R,\fm_R,k) \to (A,\fm_A,L)$ be a local morphism of Noetherian local rings. Suppose that either condition (a) or (b) of Proposition \ref{main_thm_weak} holds. Let $S \subset R$ be a multiplicative system so that $S \cap \fm_R \neq \emptyset$. Then the following are true:
\begin{enumerate}
\item[(i)] For all $p > \dim R$ and all $n \geq 0$, $H^p(\mcg_n^{\cdot}(A)) = 0$.
\item[(ii)] For all $p \geq \dim R$ and all $n \geq 0$, $H^p(\mcg_n^{\cdot}(S^{-1}A)) = 0$.
\end{enumerate}
\end{cor}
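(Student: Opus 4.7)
The plan is to upgrade Proposition~\ref{main_thm_weak} from a statement about a single element to the vanishing of the whole transition map $\mathbf{i}_\ast\colon K_n(\mcm^{p+1}(A))\to K_n(\mcm^p(A))$ in the appropriate codimension range, after which the desired acyclicity falls out of the standard collapse of the coniveau long exact sequence. A small preliminary point: in both cases of Proposition~\ref{main_thm_weak} the special fibre $A/\fm_R A$ is regular (by Lemma~\ref{fsmooth_fibre} in case (b), and because Henselization preserves regularity of the fibre in case (a)), so $\fm_R A$ is prime and Lemma~\ref{avoidance} applies.

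For part (i), Lemma~\ref{avoidance}(i) yields $\mcm^{p+1}(A) = \varinjlim_{f\in \fm_A - \fm_R A} \mcm^p(A/fA)$ for every $p\geq \dim R$. Since Quillen $K$-theory commutes with filtered colimits of exact categories, $K_n(\mcm^{p+1}(A)) = \varinjlim_f K_n(\mcm^p(A/fA))$, and each constituent map to $K_n(\mcm^p(A))$ vanishes by Proposition~\ref{main_thm_weak}. Hence $\mathbf{i}_\ast = 0$ for every $p\geq \dim R$ and every $n\geq 0$, and each coniveau localization sequence at $p\geq \dim R$ collapses to a short exact sequence
\[ 0\to K_n(\mcm^p(A))\xrightarrow{\mathbf{q}_\ast} K_n(\mcm^p(A)/\mcm^{p+1}(A))\xrightarrow{\partial} K_{n-1}(\mcm^{p+1}(A))\to 0. \]
For $p>\dim R$, both Gersten differentials $d_1^{p-1}$ and $d_1^p$ (each given by $\mathbf{q}_\ast\circ\partial$) lie in this collapsed range: injectivity of $\mathbf{q}_\ast$ at level $p+1$ identifies $\ker d_1^p$ with $\ker\partial = \mathrm{im}\,\mathbf{q}_\ast$, while surjectivity of $\partial$ at level $p-1$ identifies $\mathrm{im}\, d_1^{p-1}$ with $\mathbf{q}_\ast(K_{n-p}(\mcm^p(A)))$. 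These two subgroups coincide, so $H^p(\mcg_n^{\cdot}(A)) = 0$.

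For part (ii), the construction from the proof of Proposition~\ref{main_thm_weak} --- the auxiliary local ring $B$, the finite $A$-algebra $D = A\otimes_B A/fA$, and the short exact sequence $0\to D\to D\to A/fA\to 0$ of $D$-modules --- survives localization at $S$: $S^{-1}A\to S^{-1}D$ remains finite, $S^{-1}(A/fA)\to S^{-1}D$ remains flat, and the very same Quillen-additivity argument shows that $K_n(\mcm^p(S^{-1}(A/fA)))\to K_n(\mcm^p(S^{-1}A))$ is zero. Feeding this into Lemma~\ref{avoidance}(ii) --- which sharpens the threshold by one because $\fm_R S^{-1}A = S^{-1}A$ under the hypothesis $S\cap \fm_R\neq \emptyset$, effectively erasing the special fibre --- yields $\mathbf{i}_\ast = 0$ already for $p\geq \dim R - 1$, and the same LES-collapse argument then gives $H^p(\mcg_n^{\cdot}(S^{-1}A)) = 0$ for all $p\geq \dim R$. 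No serious obstacle is expected; the argument is essentially formal once Proposition~\ref{main_thm_weak} and Lemma~\ref{avoidance} are in hand, the only point of mild delicacy being the verification that $\fm_R A$ is prime so that Lemma~\ref{avoidance} is applicable to begin with.
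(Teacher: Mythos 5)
Your argument is correct and follows the paper's proof essentially verbatim: reduce to the vanishing of $\mathbf{i}_*\colon K_n(\mcm^{p+1})\to K_n(\mcm^p)$ in the relevant codimension range via the collapse of the coniveau long exact sequence (which you spell out; the paper cites Quillen [VII.5.6]), obtain that vanishing from Lemma~\ref{avoidance} combined with Proposition~\ref{main_thm_weak} and the fact that $K$-theory commutes with filtered colimits of exact categories, and for (ii) observe that the entire construction in the proof of Proposition~\ref{main_thm_weak} localizes (equivalently, base-changes along $R\to S^{-1}R$) compatibly. Your preliminary remark that $\fm_R A$ is prime — so that Lemma~\ref{avoidance} actually applies — is a point the paper asserts without elaboration, and your justification via regularity of the special fibre is correct.
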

\begin{proof}For (i), a simple diagram chase shows (cf. \cite[VII.5.6]{Quillen}) shows that it suffices to prove that the natural inclusion $\mcm^{p+1}(A) \to \mcm^p(A)$ induces the zero map on $K$-theory for all $p \geq \dim R$. By assumption, $A$ satisfies the conditions of Lemma \ref{avoidance}; whence we have 
\[ \mcm^{p+1}(A) = \varinjlim_{\mathclap{f \in \fm_A - \fm_R A}}\mcm^p(A/fA) \mbox{ for all $p \geq \dim R$.} \]
On the other hand, Proposition \ref{main_thm_weak} shows that $K_n(\mcm^{p}(A/fA)) \to K_n(\mcm^p(A))$ is zero, and the claim is proved.

For part (ii), we need to show that $\mcm^{p+1}(S^{-1}A) \to \mcm^p(S^{-1}A)$ induces the zero map on $K$-theory for $p \geq \dim R - 1$. By part (ii) of Lemma \ref{avoidance}, we are reduced to showing that $K_n(\mcm^p(S^{-1}(A/fA))) \to K_n(\mcm^p(S^{-1}A))$ is zero for $f \in \fm_A - \fm_R A$. However, if we trace through the proof of Proposition \ref{main_thm_weak}, we notice that all rings in sight are $R$-algebras, so if we base-change via $- \otimes_R S^{-1}R$, everything will be preserved (e.g. $S^{-1}D \to S^{-1}(A/fA)$ will still have a principal kernel, etc.). We therefore obtain a proof of our claim.
\end{proof}

\section{Filtration of the Gersten Complex}\label{gersten_filter}
\subsection{One-dimensional base}\label{double_complex} Consider a DVR $(R, \pi R, k)$ and a flat map $\phi: X \to \Spec R$ having special and generic fibres $\overline{X}$ and $X[\frac{1}{\pi}]$ respectively. Gillet and Levine \cite[Cor. 7]{GilletLevine} show that the Gersten complexes $\mcg^{\cdot}_n(X[\frac{1}{\pi}])$ and $\mcg^{\cdot}_{n-1}(\overline{X})$ fit together into the following double complex:
\[ 
\xymatrixcolsep{3mm}
\xymatrix{
\ds 0 \ar[r] & \ds \bigoplus_{\mathclap{x \in X[\frac{1}{\pi}]^0}}K_n(k(x)) \ar[d] \ar[r] & \ds \bigoplus_{\mathclap{x \in X[\frac{1}{\pi}]^1}}K_{n-1}(k(x)) \ar[r] \ar[d] & \cdots \ar[r] & \ds \bigoplus_{\mathclap{x \in X[\frac{1}{\pi}]^{n-1}}}K_{1}(k(x)) \ar[d] \ar[r] & \ds \bigoplus_{\mathclap{x \in X[\frac{1}{\pi}]^n}}K_{0}(k(x)) \ar[d] \ar[r] & 0\\
\ds 0 \ar[r] & \ds \bigoplus_{\mathclap{x \in \overline{X}^0}}K_{n-1}(k(x)) \ar[r] & \ds \bigoplus_{\mathclap{{x \in \overline{X}^1}}}K_{n-2}(k(x)) \ar[r] & \cdots \ar[r] & \ds \bigoplus_{\mathclap{x \in \overline{X}^{n-1}}}K_{0}(k(x)) \ar[r] & 0 \ar[r] & 0\\
} \]
Since the vertical and horizontal differentials are induced by projecting those of $\mcg^{\cdot}_n(X)$ onto the appropriate summands, it is readily checked that each square anti-commutes and that totalizing the double complex indeed recovers $\mcg^{\cdot}_n(X)$. The advantage of this viewpoint is that we have reinterpreted the Gersten complex on $X$ in terms of the Gersten complexes on the equicharacteristic schemes $\overline{X}$ and $X[\frac{1}{\pi}]$. In particular, when $\phi$ is smooth, $\overline{X}$ and $X[\frac{1}{\pi}]$ are regular, thereby permitting us to recast the cohomology of the rows as the Zariski cohomology groups $H^p_{\Zar}(\overline{X}, \mck_{n-1})$ and $H^p_{\Zar}(X[\frac{1}{\pi}], \mck_{n})$ respectively (Lemma \ref{panin_sheaf}).

Fix a point $x \in X[\ovpi]^p \subset X^p$, and a point $y \in X^{p+1}$ with $y \in \overline{\left\{x\right\}}$. The map $K_n(k(x)) \to K_{n-1}(k(y))$ induced from $\mcg^{\cdot}_n(X)$ is realized in the above double complex by either a horizontal or vertical map, depending on whether $y \in X[\ovpi]^{p+1}$ or $y \in \overline{X}^p$. That we can decompose $\mcg_n^{\cdot}(X)$ into a double complex owes to the fact that $y$ can only live in one of two fibres. When the base has dimension two or greater, this is no longer true:

\begin{exmp}Let $S = \mathbb{C}[T_1,T_2]_{(T_1,T_2)}$, $Y = \Spec S$ and let $\phi: X = \Spec S[U,V] \to Y$ be the projection induced by $S \to S[U,V]$. The prime ideal $\fp = (T_1U + T_2) \in X^1$ contracts to $(0)$ in $S$, so $\phi(\fp) \in Y^0$. Consider now the primes $\fq_0 = (T_1U + T_2,V)$, $\fq_1 = (T_2,U)$, and $\fq_2 = (T_1,T_2)$ in $X^2$. For each $i$, $\fp \subset \fq_i$, so $\fq_i$ does indeed lie in the closure of $\fp$; however, it's easy to check that $\phi(\fq_i) \in Y^i$.
\end{exmp}

This shows that the double-complex approach is not viable in general; instead we shall use a spectral sequence.

\subsection{Construction of the Filtration}
Fix a flat morphism $f:X \to Y$ of Noetherian schemes. Our goal is to understand the Gersten complex on $X$ in terms of the fibres $X_y$ for each $y \in Y$. Put $Y^{(\geq p)} = \bigcup_{i \geq p}Y^i$. We define a decreasing filtration $F^{\bullet}\mcg_n^{\cdot}(X)$ via
\[ F^p\mcg_n^{q}(X) := \bigoplus_{\mathclap{\substack{x \in X^q \\ f(x) \in Y^{(\geq p)}}}}K_{n-q}(k(x)) \subset \bigoplus_{x \in X^q}K_{n-q}(k(x)) = \mcg_n^q(X). \]

\begin{lem}\label{filter1}The filtration $F$ is compatible with the differential on $\mcg_n^{\cdot}(X)$.
\end{lem}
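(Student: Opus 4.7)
The plan is to unwind the definition of the Gersten differential and check directly that it respects the filtration $F^\bullet$. It suffices to consider a homogeneous element $\alpha \in K_{n-q}(k(x)) \subset F^p\mcg_n^q(X)$ supported at a single point $x \in X^q$ with $f(x) \in Y^{(\geq p)}$, and show that each component of $d\alpha$ lies in some summand $K_{n-q-1}(k(y))$ with $f(y) \in Y^{(\geq p)}$. From the construction of the coniveau spectral sequence (Theorem \ref{BGQ}) via localization sequences for the pairs $\mcm^{q+1} \subset \mcm^{q}$, the component of $d\alpha$ landing at $y \in X^{q+1}$ vanishes unless $y$ lies in the Zariski closure of $\{x\}$, so I reduce to the case where $y$ is a specialization of $x$.

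The main step is then a purely topological observation: if $y \in \overline{\{x\}}$ in $X$, then $f(y) \in \overline{\{f(x)\}}$ in $Y$ by continuity of $f$. Equivalently, $f(x)$ corresponds to a prime of $\mco_{Y,f(y)}$, and $\mco_{Y,f(x)}$ is the localization of $\mco_{Y,f(y)}$ at this prime; hence $\dim \mco_{Y,f(y)} \geq \dim \mco_{Y,f(x)} \geq p$, so $f(y) \in Y^{(\geq p)}$, as required.

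There is no serious obstacle here; the argument is essentially formal, using only the continuity of $f$ and the compatibility of Krull dimension with specialization. In particular, flatness of $f$ is not needed for this lemma (it will enter later, when identifying the associated graded pieces of $F^\bullet$ with Gersten complexes on the fibre rings).
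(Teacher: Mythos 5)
Your argument is correct and follows essentially the same route as the paper: both proofs reduce to the fact that the component $d_{xy}$ of the Gersten differential vanishes unless $y$ specializes $x$, and then observe that $f(x)$ is then a generization of $f(y)$, so $\dim \mco_{Y,f(x)} \leq \dim \mco_{Y,f(y)}$. Your explicit remark that flatness of $f$ is not needed here is accurate — it only enters in Lemma \ref{filter3} when identifying the associated graded with Gersten complexes of the fibres.
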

\begin{proof}Write out $d: \mcg_n^{q}(X) \to \mcg_n^{q+1}(X)$ as
\[ \bigoplus_{x \in X^q} K_{n-q}(k(x)) \stackrel{d}{\longrightarrow} \bigoplus_{z \in X^{q+1}} K_{n-q-1}(k(z)), \]
and denote by $d_{xz}$ the induced map $K_{n-q}(k(x)) \to K_{n-q-1}(k(z))$. From the proof of \cite[VII.5.14]{Quillen}, if $d_{xz}$ is nonzero, then $z$ must lie in the closure of $\left\{x\right\}$. In other words, $x$ corresponds to a (non-maximal) prime of $\mco_{X,z}$; whence, $f(x)$ corresponds to a prime of $\mco_{Y,f(z)}$. We then have $\dim \mco_{Y,f(x)} \leq \dim \mco_{Y,f(z)}$ as desired.
\end{proof}

We may now take the associated-graded of this complex:
\[ F^p\mcg_n^{q}(X)/F^{p+1}\mcg_n^q(X) = \bigoplus_{\substack{x \in X^q \\ f(x) \in Y^p}}K_{n-q}(k(x)). \]

For a point $y \in Y$ with residue field $k(y)$ we shall denote by $X_y := X \times_{Y} \Spec k(y)$ the fibre over $y$. If $f(x) = y$ then since $f$ is flat, recall that there is an equality $\dim \mco_{X,x} = \dim \mco_{X_y,x} + \dim \mco_{Y,y}$ by \cite[15.1]{Matsumura}. Thus, if $x \in X^q$ and $f(x) \in Y^p$, then $x \in X_y^{q-p}$. This gives the following degree-wise identifications:
\[ F^p\mcg_n^{q}(X)/F^{p+1}\mcg_n^q(X) = \bigoplus_{y \in Y^p}\left( \bigoplus_{x \in X_y^{q-p}}K_{n-q}(k(x))\right) = \bigoplus_{y \in Y^p}\left( \mcg_{n-p}^{q-p}(X_y) \right) .\]

We claim that the induced differential on $F^p/F^{p+1}(\mcg^{\cdot}_n(X))$ acts diagonally on each fibre, allowing the above direct-sum decomposition of abelian groups to extend to a decomposition of complexes:

\begin{lem}\label{filter3}For each $p \geq 0$, there is a natural isomorphism of complexes
\[ (F^p/F^{p+1})\mcg_n^{\cdot}(X) \cong \bigoplus_{y \in Y^p}\left(\mcg_{n-p}^{\cdot}(X_y)[-p]\right).\]
\end{lem}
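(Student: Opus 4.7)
The plan is to bootstrap off the degree-wise identification
\[ F^p\mcg_n^{q}(X)/F^{p+1}\mcg_n^q(X) = \bigoplus_{y \in Y^p} \mcg_{n-p}^{q-p}(X_y) \]
that was worked out immediately before the statement, using the formula $\dim \mco_{X,x} = \dim \mco_{X_y,x} + \dim \mco_{Y,y}$ afforded by flatness. With this in hand, the content of the lemma splits into two claims: first, that the induced differential on the associated graded preserves the direct-sum decomposition indexed by $y \in Y^p$ (so we really get a direct sum of subcomplexes, one per point $y$); and second, that the resulting complex at each $y \in Y^p$ coincides with a shift of the Gersten complex of the fibre $X_y$.

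For the first claim (``diagonality''), let $d_{xz} \colon K_{n-q}(k(x)) \to K_{n-q-1}(k(z))$ be a component of the $X$-Gersten differential with $x \in X^q$, $z \in X^{q+1}$, $f(x) = y \in Y^p$, and $f(z) = y' \in Y^p$. As in Lemma \ref{filter1}, non-vanishing of $d_{xz}$ forces $z \in \overline{\{x\}}$, and hence $y' \in \overline{\{y\}}$. If $y' \neq y$, then in the local ring $\mco_{Y,y'}$ the prime corresponding to $y$ is a proper subideal of the maximal ideal, so $\dim \mco_{Y,y'} \geq \dim \mco_{Y,y} + 1 = p+1$, contradicting $y' \in Y^p$. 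Therefore $y' = y$, and the differential respects the decomposition.

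For the second claim, fix $y \in Y^p$ together with $x \in X^q$ and $z \in X^{q+1}$ lying over $y$ with $z \in \overline{\{x\}}$. The component of the Gersten differential at $(x,z)$ is by construction the boundary map in the Quillen localization sequence attached to the one-dimensional local ring $\mco_{\overline{\{x\}},z}$ (with generic point $x$ and closed point $z$). Since $f(x)=f(z)=y$, the prime of $\mco_{X,z}$ cutting out $x$ contains $\fm_y \mco_{X,z}$, so
\[ \mco_{\overline{\{x\}},z} = \mco_{X,z}/\fp_x \;=\; \bigl(\mco_{X,z}/\fm_y\mco_{X,z}\bigr)\big/\overline{\fp_x} \;=\; \mco_{X_y,z}/\overline{\fp_x} \]
is literally the same ring whether computed inside $X$ or inside $X_y$. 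The corresponding boundary map in the Gersten complex of $X_y$ therefore coincides with the one in $\mcg_n^{\cdot}(X)$, and assembling these identifications across all pairs $(x,z)$ yields an isomorphism of complexes onto $\bigoplus_{y \in Y^p} \mcg_{n-p}^{\cdot}(X_y)[-p]$, with the shift by $-p$ accounting for the discrepancy between the codimension of $x$ in $X$ and in $X_y$.

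The main obstacle is the second step, i.e.\ checking that the Quillen boundary map is genuinely intrinsic to the one-dimensional local ring $\mco_{\overline{\{x\}},z}$ and does not see the surrounding ambient scheme. Once the ring is identified in $X$ and $X_y$, naturality of the localization sequence renders the compatibility automatic, but one must be careful to invoke only flat base change (which is why the hypothesis that $f$ be flat is essential for the degree-wise identification to begin with).
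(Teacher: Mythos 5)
Your degree-wise identification and diagonality argument are exactly as in the paper, and the remaining content --- that the differential on each $y$-summand of the associated graded recovers the Gersten differential of $X_y$, shifted by $p$ --- is true and your route to it is sound. But you take a noticeably different path than the paper. The paper exhibits $\mcg_{n-p}^{\cdot}(X_y)[-p]$ as a subquotient of $\mcg_n^{\cdot}(X)$ by explicit maps of coniveau spectral sequences: open restriction gives a quotient (flat functoriality), the closed immersion $T_y \hookrightarrow T$ over $\Spec\mco_{Y,y}$ gives an inclusion with a codimension shift (finite pushforward, Lemma \ref{finite_coniveau}, plus the flatness formula $\dim\mco_{X,x}=\dim\mco_{X_y,x}+\dim\mco_{Y,y}$), and then one passes to the limit over opens $U \ni y$. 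You instead verify the compatibility componentwise by appealing to the intrinsic description of $d_{xz}$ as the boundary map of the one-dimensional local domain $\mco_{\overline{\{x\}},z}$, and then observe correctly that $\mco_{X,z}/\fp_x = \mco_{X_y,z}/\overline{\fp_x}$ so the two boundary maps are literally the same.

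The one gap you leave open is precisely the one you flag: the claim that $d_{xz}$ only sees $\mco_{\overline{\{x\}},z}$ is a substantive statement, not a definition, and your pointer to "naturality of the localization sequence" invoking "only flat base change" is slightly off-target. Establishing the intrinsic description requires \emph{two} kinds of functoriality for the coniveau filtration: flat pullback to localize at $z$, \emph{and} finite (in fact closed-immersion) pushforward with the codimension shift to compare $\Spec\mco_{\overline{\{x\}},z}$ with $\Spec\mco_{X,z}$. The paper uses both; you mention only the first. Filling in the justification for the intrinsic description would in effect reproduce the paper's argument, so this is really the same idea in a more pointwise packaging with one of its ingredients left tacit. (A small further remark: the fact that $\mco_{\overline{\{x\}},z}$ is one-dimensional when $x\in X^q$, $z\in X^{q+1}$, $z\in\overline{\{x\}}$ is automatic from height comparisons and needs no catenary hypothesis, but it is worth stating since you lean on it.)
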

\begin{proof}We have already shown that the identification holds for each degree; we need to show that the differential is compatible. Let $y,y' \in Y^p$ with $y \neq y'$. Suppose that $x \in X_{y}^{q-p}$ and $x' \in X_{y'}^{q-p+1}$. Thus, $K_{n-q}(k(x))$ is a summand of $(F^p/F^{p+1})\mcg_n^{q}(X)$ and $K_{n-q-1}(k(x'))$ a summand of $(F^p/F^{p+1})\mcg_n^{q+1}(X)$. If $x'$ were in the closure of $\left\{x\right\}$, then as we saw in the proof of Lemma \ref{filter1}, $y = f(x)$ would correspond to a point of $\mco_{Y,y'}$. But since $y' \neq y$, we would then have $\dim \mco_{Y,y} < \dim \mco_{Y,y'}$, contradicting the fact that $y$ and $y'$ have the same codimension.  We therefore see that the induced differential (cf. Lemma \ref{filter1}) $d_{xx'}$ must be zero. This shows that the complex $(F^p/F^{p+1})\mcg_n^{\cdot}(X)$ decomposes as a direct sum.

It remains to show that for a fixed $y \in Y^p$, the Gersten complex on the fibre $\mcg_{n-p}^{\cdot}(X_y)[-p]$ is induced as a subquotient from the Gersten complex on $X$, $\mcg_n^{\cdot}(X)$. If $V \subset X$ is open, we have a map $\mcm^r(X) \to \mcm^r(V)$ and hence a map of coniveau spectral sequences and Gersten complexes (cf. Section \ref{coniveau}). This induced map $\mcg_n^{\cdot}(X) \to \mcg_n^{\cdot}(V)$ may be viewed as a quotient where we throw away all the summands involving points lying outside of $V$. On the other hand, if $T \to \Spec \mco_{Y,y}$ is flat, then $i:T_y = T \times \Spec k(y) \hookrightarrow T$ is a closed immersion of pure codimension $p$; in fact, by \cite[15.1]{Matsumura}, if $t$ is a point of codimension $r$ in $T_y$, then it has codimension $r+p$ in $T$. Thus, $i_*$ induces an inclusion $\mcm^r(T_y) \to \mcm^{r+p}(T)$ and hence a map of coniveau spectral sequences $E_1^{r,-n-r}(T_y) \to E_1^{r+p,-n-p-r}(T)$. Ultimately one obtains an inclusion of Gersten complexes $\mcg_{n-p}^{\cdot}(T_y)[-p] \to \mcg_n^{\cdot}(T)$ which can be thought of as the inclusion of all summands involving points contained in $T_y$. The result now follows from the fact that $X_y = T_y$ for $\ds T = \varprojlim_{U} (U \times_Y X)$ where $U$ varies over all open subsets of $Y$ containing $y$.
\end{proof}

\begin{prop}\label{ss_filtration}Let $X \to Y$ be a flat map of Noetherian schemes and assume that $\dim Y < \infty$. Then for each $n \geq 0$, there are bounded, first-quadrant spectral sequences:
\[ E_1^{pq} = \bigoplus_{y \in Y^p} H^q(\mcg_{n-p}^{\cdot}(X_y)) \Rightarrow H^{p+q}(\mcg_n^{\cdot}(X)). \]
\end{prop}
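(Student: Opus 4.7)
The plan is to apply the standard spectral sequence of a bounded filtered complex to the filtration $F^{\bullet}\mcg_n^{\cdot}(X)$ that was just introduced, since all the nontrivial work has already been done in Lemmas \ref{filter1} and \ref{filter3}.

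First I would check that the filtration is exhaustive and bounded. That $F^0\mcg_n^{\cdot}(X) = \mcg_n^{\cdot}(X)$ is immediate from the definition since every point of $Y$ has codimension $\geq 0$. For boundedness, the assumption $\dim Y < \infty$ guarantees that $Y^{(\geq p)} = \emptyset$ whenever $p > \dim Y$, and hence $F^{p+1}\mcg_n^{\cdot}(X) = 0$ for $p \geq \dim Y$. By Lemma \ref{filter1}, the filtration is respected by the differential, so one obtains the usual spectral sequence of a filtered complex with $E_0^{pq} = F^p\mcg_n^{p+q}(X)/F^{p+1}\mcg_n^{p+q}(X)$, differential $d_0$ induced by that of $\mcg_n^{\cdot}(X)$, converging to $H^{p+q}(\mcg_n^{\cdot}(X))$.

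Next I would identify the $E_1$ page. By Lemma \ref{filter3}, the $p$-th column of $E_0$ is the complex
\[ \bigoplus_{y \in Y^p}\mcg_{n-p}^{\cdot}(X_y)[-p], \]
so taking cohomology in the vertical direction yields
\[ E_1^{pq} = H^{p+q}\!\left(\bigoplus_{y \in Y^p}\mcg_{n-p}^{\cdot}(X_y)[-p]\right) = \bigoplus_{y \in Y^p}H^{q}(\mcg_{n-p}^{\cdot}(X_y)), \]
as desired.

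Finally, to justify that the spectral sequence is first quadrant and strongly convergent: $p \geq 0$ since there are no points of negative codimension, and since each fibre complex $\mcg_{n-p}^{\cdot}(X_y)$ is concentrated in degrees $[0, n-p]$, we have $H^q(\mcg_{n-p}^{\cdot}(X_y)) = 0$ for $q < 0$. Boundedness of the filtration then yields strong convergence to $H^{p+q}(\mcg_n^{\cdot}(X))$. Everything is formal once Lemmas \ref{filter1} and \ref{filter3} are in hand; there is no real obstacle beyond careful bookkeeping of the degree shifts, which was the substantive content absorbed into Lemma \ref{filter3}.
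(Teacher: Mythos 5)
Your argument is correct and follows the same route as the paper: take the spectral sequence of the bounded filtration $F^{\bullet}$, which converges because the filtration is bounded (finite Krull dimension of $Y$), and use Lemma \ref{filter3} to identify the $E_1$ page, accounting for the shift $[-p]$. Your additional remarks checking exhaustiveness and the first-quadrant vanishing (from $Y^{(\geq p)} = \emptyset$ for $p > \dim Y$ and from $\mcg_{n-p}^{\cdot}(X_y)$ being concentrated in nonnegative degrees) are sound and merely make explicit what the paper leaves implicit.
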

\begin{proof}Since $Y$ has finite Krull dimension, our filtration $F^{\bullet}$ on $\mcg_n^{\cdot}(X)$ is bounded, guaranteeing the convergence of the associated spectral sequence (cf. \cite[5.5.1]{Weibel})
\[ E_1^{pq} = H^{p+q}(F^p/F^{p+1} \mcg_n^{\cdot}(X)) \Rightarrow H^{p+q}(\mcg_n^{\cdot}(X)). \]
On the other hand, by Lemma \ref{filter3} we can identify the $E_1$ page as a direct sum of complexes:
\[ \begin{array}{r@{\hspace{2pt}}c@{\hspace{2pt}}l@{\hspace{2pt}}}
\ds E_1^{pq} = H^{p+q}(F^p/F^{p+1} \mcg_n^{\cdot}(X)) & = & \ds H^{p+q}\left(\bigoplus_{y \in Y^p}\left(\mcg_{n-p}^{\cdot}(X_y)[-p]\right)\right) \vspace{3mm}\\
																								  & = &\ds \bigoplus_{y \in Y^p} H^q(\mcg_{n-p}^{\cdot}(X_y)). \end{array}\]
\end{proof}

\subsection{Acyclicity of the Gersten Complex for Geometrically Regular Maps} Now that we can understand the Gersten complex in terms of its fibres, we can apply Panin's trick to prove our main theorem:
\begin{thm}\label{mainthm}Let $(R,\fm_R,k) \to (A,\fm_A,L)$ be a geometrically regular morphism of Noetherian local rings. Suppose $A$ is Henselian. Then $H^m(\mcg_n^{\cdot}(A)) = 0$ for all $m > \dim R$ and all $n \geq 0$.
\end{thm}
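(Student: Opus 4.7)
The plan is to bootstrap the finite-type result of Corollary \ref{complex_vanish}(i) to the general geometrically regular setting by combining the filtration spectral sequence of Proposition \ref{ss_filtration} with Popescu's theorem and Panin's sheaf-theoretic trick. First, I would invoke Popescu's Theorem \ref{popescu}: by localizing at the appropriate primes and Henselizing, we realize $A = \fcolim A_\lambda$, where each $(A_\lambda, \fm_{A_\lambda}, L_\lambda)$ is the Henselization of a local essentially smooth $R$-algebra. Each composite $R \to A_\lambda$ is then geometrically regular and lies within the scope of Corollary \ref{complex_vanish}(i), so that $H^m(\mcg_n^{\cdot}(A_\lambda)) = 0$ for every $m > \dim R$.

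Next, I would apply Proposition \ref{ss_filtration} to the flat morphisms $X := \Spec A \to Y := \Spec R$ and $X_\lambda := \Spec A_\lambda \to Y$, yielding bounded first-quadrant spectral sequences
\[ E_1^{pq}(A) = \bigoplus_{y \in Y^p} H^q(\mcg_{n-p}^{\cdot}(X_y)) \Rightarrow H^{p+q}(\mcg_n^{\cdot}(A)), \]
\[ E_1^{pq}(A_\lambda) = \bigoplus_{y \in Y^p} H^q(\mcg_{n-p}^{\cdot}((X_\lambda)_y)) \Rightarrow H^{p+q}(\mcg_n^{\cdot}(A_\lambda)). \]
By geometric regularity of $R \to A$ and $R \to A_\lambda$, every fibre $X_y$ and $(X_\lambda)_y$ is a regular Noetherian scheme over the field $k(y)$. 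Corollary \ref{panin_sheaf} then recasts each Gersten cohomology group as the Zariski sheaf cohomology $H^q_{\Zar}(-,\mck_{n-p})$, and Proposition \ref{panin_limit}, applied to the inverse system $X_y = \varprojlim_\lambda (X_\lambda)_y$, identifies $E_1^{pq}(A)$ with $\fcolim E_1^{pq}(A_\lambda)$. Since filtered colimits are exact and therefore commute with the cohomology of complexes of abelian groups, this identification propagates inductively up the spectral sequence to $E_r^{pq}(A) = \fcolim E_r^{pq}(A_\lambda)$ for every $r$, in particular for $r = \infty$.

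To conclude, since each $A_\lambda$-spectral sequence has abutment zero for $p+q > \dim R$ and is bounded, we obtain $E_\infty^{pq}(A_\lambda) = 0$ throughout that range. Taking the colimit gives $E_\infty^{pq}(A) = 0$ for $p+q > \dim R$, and convergence of the spectral sequence for $A$ then forces $H^m(\mcg_n^{\cdot}(A)) = 0$ whenever $m > \dim R$, as required.

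The main obstacle I anticipate is verifying that the $d_r$ differentials are compatible with the colimit identification of the $E_r$-pages. Because the transition maps $A_\lambda \to A_{\lambda'}$ are not flat, they induce no direct morphism of Gersten complexes, so the identification $E_1^{pq}(A) = \fcolim E_1^{pq}(A_\lambda)$ can only be established on the Zariski-sheaf side of Panin's isomorphism. Keeping $d_1$ --- which encodes specialization across adjacent codimension strata of $Y$ --- compatible with this indirect identification is the technical heart of the argument, and is likely best handled by lifting the filtration $F^\bullet \mcg_n^{\cdot}$ to a corresponding filtration of the sheaf complex $\dumcg_{n,X}^{\cdot}$ of Section \ref{nisnevich}, whose formation commutes with inverse limits of Noetherian affine schemes via Proposition \ref{panin_limit}.
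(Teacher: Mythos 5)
Your proposal correctly identifies the main tools (Popescu, Panin's sheaf trick, the filtration spectral sequence of Proposition \ref{ss_filtration}), but there is a genuine gap in how you assemble them, and you have correctly diagnosed it yourself without resolving it. You know $H^m(\mcg_n^\cdot(A_\lambda)) = 0$ for $m > \dim R$ from Corollary \ref{complex_vanish}(i), hence $E_\infty^{pq}(A_\lambda) = 0$ for $p+q > \dim R$, and you want to colimit this to obtain $E_\infty^{pq}(A) = 0$. But since the transition maps $A_\lambda \to A_{\lambda'}$ are not flat, there is no filtered system of Gersten complexes $\mcg_n^\cdot(A_\lambda)$, hence no filtered system of filtered complexes, hence no filtered system of spectral sequences. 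The identification $E_1^{pq}(A) \cong \fcolim E_1^{pq}(A_\lambda)$ that Panin's trick supplies is only an abstract isomorphism of abelian groups for each fixed $(p,q)$, arranged term by term via Corollary \ref{panin_limit}; nothing ensures that the $d_1$-differentials on either side agree under it, and without that you cannot form $E_2$, let alone $E_\infty$, as a colimit. Your proposed remedy (lifting the filtration $F^\bullet$ to the sheaf complex $\dumcg_{n,X}^\cdot$ and hoping that Grothendieck's limit theorem applies pagewise) is plausible but unproven, and it requires new machinery not developed in this paper.

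The paper avoids the problem entirely by never leaving the $E_1$ page. The key step you missed is Corollary \ref{complex_vanish}\emph{(ii)}, which you never invoke. Writing $(X_\lambda)_y = \Spec\bigl(S^{-1}(A_\lambda/\fp A_\lambda)\bigr)$ with $S = R/\fp - \{0\}$ and noting that $R/\fp \to A_\lambda/\fp A_\lambda$ again satisfies condition (a) of Proposition \ref{main_thm_weak}, one applies \ref{complex_vanish}(ii) with base $R/\fp$ of dimension $\leq \dim R - p$ to conclude $H^q(\mcg_{n-p}^\cdot((X_\lambda)_y)) = 0$ for $q \geq \dim R - p$ whenever $p < \dim R$ (and for $q \geq 1$ when $p = \dim R$, by Panin). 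These vanishings hold at $E_1$, uniformly in $\lambda$, in a range that already covers the region $p+q > \dim R$. One then passes each \emph{individual} $E_1^{pq}$-term through Panin's colimit isomorphism to get $E_1^{pq}(A) = 0$ in that same range, and the theorem follows from the spectral sequence for $A$ alone --- the spectral sequences for the $A_\lambda$ are never actually used, only their fibrewise $E_1$-vanishing. Your Corollary \ref{complex_vanish}(i) step, while true, produces the correct abutment for the wrong spectral sequences and forces you into the colimit-of-spectral-sequences problem you cannot solve; replacing it with the fibrewise application of part (ii) dissolves the difficulty.
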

\begin{proof}By Popescu's Theorem \ref{popescu}, we may write $A = \fcolim A_{\lambda}$ where $\left\{A_{\lambda}\right\}_{\lambda \in \Lambda}$ is a filtered system of smooth $R$-algebras. If we localize each $A_{\lambda}$ at the inverse image of $\fm_A$, we may assume each $A_{\lambda}$ is local and essentially smooth over $R$. By the universal property of Henselization, each $A_{\lambda} \to A$ factors uniquely through $A_{\lambda}^h$, so after replacing $A_{\lambda}$ with $A_{\lambda}^h$, we can assume that $A$ is a filtered colimit of $R$-algebras $\left\{A_{\lambda}\right\}_{\lambda \in \Lambda}$ satisfying condition (a) of Proposition \ref{main_thm_weak}.

Put $Y = \Spec R$, $X = \Spec A$, and $X_{\lambda} = \Spec A_{\lambda}$.  For $y \in Y$, we put $X_{y} := X \times_{Y} \Spec k(y)$ and $(X_{\lambda})_{y} := X_{\lambda} \times_Y \Spec k(y)$. Clearly, we have $X_{y} = \finvlim (X_{\lambda})_{y}$. Since $X_{y}$ is regular and equicharacteristic, Lemmas \ref{panin_sheaf} and \ref{panin_limit} give
\[ H^m(\mcg_n^{\cdot}(X_{y})) = H^m_{\Zar}(X_{y},\mck_n) = \fcolim H^m_{\Zar}((X_{\lambda})_{y},\mck_n) \mbox{ for all $m,n \geq 0$.} \]

Fix $\lambda \in \Lambda$. We study now the Gersten complex on each $(X_{\lambda})_{y}$. If $y \in Y^p$, then $y$ corresponds to a height-$p$ prime, $\fp$. One then trivially has that $\dim R/\fp \leq \dim R - p$. The local homomorphism $R/\fp \to A_{\lambda}/\fp A_{\lambda}$ also satisfies condition (a) of Proposition \ref{main_thm_weak}. $(X_{\lambda})_y = \Spec(A_{\lambda} \otimes_R k(y))$, and $A_{\lambda} \otimes_R k(y) \cong S^{-1}(A_{\lambda}/\fp A_{\lambda})$ where $S = R/\fp - \left\{0\right\}$. If $p < \dim R$ (i.e. $\fp \neq \fm_R$), then $S$ will have non-empty intersection with the maximal ideal of $R/\fp$. In this case, we can apply part (ii) of Corollary \ref{complex_vanish} and Lemma \ref{panin_sheaf} to conclude that
\[ H^m_{\Zar}((X_{\lambda})_{y},\mck_n) = H^m(\mcg^{\cdot}_n((X_{\lambda})_{y})) = 0 \mbox{ if $p < \dim R$, $\dim R - p \leq m$, and $n \geq 0$.} \]

On the other hand, if $p = \dim R$, then $\fp$ is the maximal ideal of $R$ and $X_{y}$ is an equicharacteristic regular local scheme. Thus, $H^m(\mcg_n^{\cdot}(X_{y})) = 0$ for $m \geq 1$ by Panin's Theorem \ref{panin}. Putting everything together therefore shows that
\[ H^m(\mcg_n^{\cdot}(X_{y})) = H^m_{\Zar}(X_{y},\mck_n) = \fcolim H^m_{\Zar}((X_{\lambda})_y, \mck_n) = 0 \]
for all $y \in Y^p$, $m \geq \max \left\{\dim R - p,1 \right\}$, and $n \geq 0$.

Proposition \ref{ss_filtration} gives us a first-quadrant spectral sequence
\[ E_1^{pq} = \bigoplus_{y \in Y^p} H^q(\mcg_{n-p}^{\cdot}(X_{y})) \Rightarrow H^{p+q}(\mcg_n^{\cdot}(X)). \]
where we see that $E_1^{pq} = 0$ for $q \geq \max \left\{\dim R - p,1\right\}$ (or, of course, if $p > \dim R$). For $m > \dim R$, $H^m(\mcg_n(X))$ acquires a finite filtration whose associated graded is $\ds \bigoplus_{p \geq 0} E_{\infty}^{p,m-p} = 0$. The theorem follows.
\end{proof}

\begin{quest}If $\dim R = 1$ in Theorem \ref{mainthm}, then the $E_1$ page of the spectral sequence \ref{ss_filtration} will consist of a single row along $q=0$.
\[ 0 \to E_1^{0,0} = \bigoplus_{\mathclap{\substack{\fp \subset R \\ \hgt \fp = 0}}}K_n(A \otimes_R k(\fp)) \to E_1^{1,0} = \bigoplus_{\mathclap{\substack{\fp \subset R \\ \hgt \fp = 1}}}K_{n-1}(k(\fp)) \to 0 \]
Even if $R$ is a local domain of dimension $2$ with fraction field $F$, this is no longer necessarily true. In fact, our methods cannot determine whether $H^1(\mcg_n^{\cdot}(A \otimes_R F)) = E_1^{0,1}$ vanishes. To find a counterexample, it would suffice to find a Henselian $A$ geometrically regular over $R$ such that $A \otimes_R F$ is not a UFD. In such a case, $H^1(\mcg_1(A \otimes_R F)) = \ch^1(A \otimes_R F) \neq 0$ (see Section \ref{chow} below). Note that if such an example exists, then $R$ would have to be singular.
\end{quest}

Theorem \ref{mainthm} may be regarded as a Nisnevich-local acyclicity statement in a sense which we now make precise:
\begin{cor}\label{nisnevich_acyclicity}Let $X \to Y$ be a geometrically regular morphism of Noetherian schemes with $d = \dim Y < \infty$. Put
\[ \tau^{\leq d}\left(\dumcg_{n,X}^{\cdot}\right) = \left(0 \to \dumcg_{n,X}^0 \to \dumcg_{n,X}^1 \to \cdots \to \dumcg^{d-1}_{n,X} \to \ker\left( \dumcg^{d}_{n,X} \to \dumcg^{d+1}_{n,X} \right) \to 0 \right) \]
where $\dumcg_{n,X}$ is the Gersten complex of Nisnevich sheaves from Section \ref{nisnevich}. Then the natural inclusion $\ds \tau^{\leq d}\left(\dumcg_{n,X}^{\cdot}\right) \to \dumcg_{n,X}^{\cdot}$ is a quasi-isomorphism on $X_{\Nis}$ for all $n \geq 0$.
\end{cor}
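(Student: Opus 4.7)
The plan is to check the quasi-isomorphism stalkwise on $X_{\Nis}$ and reduce immediately to the main theorem. By construction, $\tau^{\leq d}(\dumcg_{n,X}^{\cdot}) \to \dumcg_{n,X}^{\cdot}$ is an isomorphism on cohomology sheaves in degrees $\leq d$; in degrees $> d$ the truncation is zero. So the claim is equivalent to the vanishing of the cohomology sheaves $\mathcal{H}^p(\dumcg_{n,X}^{\cdot}) = 0$ for $p > d$. Since cohomology commutes with passage to stalks, I would fix a point $x \in X$ and, via Lemma \ref{complex_stalk}(ii), reduce to showing $H^p(\mcg_n^{\cdot}(\mco_{X,x}^h)) = 0$ for all $p > d$.

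Now set $y = f(x)$ and $R = \mco_{Y,y}$. Localizing the geometrically regular map $X \to Y$ at $x$ yields a geometrically regular local homomorphism $R \to \mco_{X,x}$ (flatness is preserved, and the fibres are localizations of the fibres of $f$, hence remain geometrically regular). Composing with the Henselization $\mco_{X,x} \to \mco_{X,x}^h$, which is itself geometrically regular by Example \ref{geomregexmps}(a), produces a geometrically regular local homomorphism $R \to \mco_{X,x}^h$ whose target is Henselian. Theorem \ref{mainthm} then gives $H^p(\mcg_n^{\cdot}(\mco_{X,x}^h)) = 0$ for every $p > \dim R$, and since $\dim R = \dim \mco_{Y,y} \leq \dim Y = d$, the required vanishing follows.

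The corollary is thus a formal consequence of Theorem \ref{mainthm} together with the Nisnevich stalk computation of Lemma \ref{complex_stalk}(ii); no substantive obstacle arises. The only routine verification needed is that geometric regularity is stable under composition with the Henselization, which follows because $\mco_{X,x} \to \mco_{X,x}^h$ is a filtered colimit of essentially \'etale extensions, so each fibre of $R \to \mco_{X,x}^h$ is a filtered colimit of essentially \'etale extensions of the already geometrically regular fibres of $R \to \mco_{X,x}$.
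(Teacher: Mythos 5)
Your proposal is correct and matches the paper's own proof essentially verbatim: both reduce to a stalkwise check via Lemma \ref{complex_stalk}(ii) and then invoke Theorem \ref{mainthm} applied to the geometrically regular local map $\mco_{Y,y} \to \mco_{X,x}^h$. The additional remarks you include (on why $\tau^{\leq d}$ reduces the claim to vanishing in degrees $> d$, and on stability of geometric regularity under composition with Henselization) are correct verifications that the paper leaves implicit.
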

\begin{proof}Fix some $x \in X$. It suffices to show that if we take the Nisnevich stalk of the complex $\dumcg_{n,X}^{\cdot}$ at $x$ becomes acyclic in degrees $d+1$ and higher. By Lemma \ref{complex_stalk}, we need to show that $H^p(\mcg_n^{\cdot}(\mco_{X,x}^h)) = 0$ for $p > d$. If $x$ maps to $y \in Y$, then $\mco_{Y,y} \to \mco_{X,x} \to \mco_{X,x}^h$ is geometrically regular, $\dim \mco_{Y,y} \leq d$, and the result now follows from Theorem \ref{mainthm}.
\end{proof}

\begin{rem}To prove analogous statement of Corollary \ref{nisnevich_acyclicity} for $X_{\Zar}$, one would need to remove the Henselian hypothesis from Theorem \ref{mainthm}. To do this, it would suffice to prove Proposition \ref{main_thm_weak} when $R \to A$ is an essentially smooth local morphism of Noetherian local rings.
\end{rem}

\section{The Case of a One-Dimensional Base}\label{unramified} When $(A,\fm_A, L)$ is essentially smooth over a DVR $(R, \pi R, k)$ \cite{GilletLevine} and \cite{Bloch} prove a Zariski-local analogue of Theorem \ref{mainthm}. Using Panin's machinery, we can remove the finiteness hypotheses and generalize their results to the geometrically regular case.

\begin{prop}\label{gl_improved}Let $(A,\fm_A,L)$ be a Noetherian local ring, geometrically regular over a DVR $(R, \pi R, k)$. Then there is a quasi-isomorphism of complexes:
\[ \textstyle \left( 0 \to  K_n(A[\ovpi]) \stackrel{\partial}{\longrightarrow} K_{n-1}(A/\pi A) \to 0 \right) \stackrel{\sim}{\longrightarrow} \mcg_n^{\cdot}(A). \]
(Here, $\partial$ is the connecting map from the localization exact sequence for $A/\pi A$, $A$ and $A[\ovpi]$.)
\end{prop}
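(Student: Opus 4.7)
The strategy is to apply the filtration from Section \ref{gersten_filter} to the map $\Spec A \to \Spec R$. Since $\dim R = 1$, only the pieces $F^0$ and $F^1$ are nonzero, and Lemma \ref{filter3} produces a short exact sequence of complexes
\[ 0 \to \mcg_{n-1}^{\cdot}(A/\pi A)[-1] \to \mcg_n^{\cdot}(A) \to \mcg_n^{\cdot}(A[\ovpi]) \to 0, \]
whose outer terms are the (shifted) Gersten complexes of the closed and generic fibers of $\Spec A \to \Spec R$.

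Having the SES in hand, the next step is to pin down the cohomology of each outer complex. For the closed fiber, $A/\pi A$ is an equicharacteristic regular local ring --- it is the geometrically regular fiber over the closed point of $R$ --- so Panin's Theorem \ref{panin} yields $H^0(\mcg_{n-1}^{\cdot}(A/\pi A)) = K_{n-1}(A/\pi A)$ with all higher cohomology vanishing. For the generic fiber, $A[\ovpi]$ is regular over the field $\operatorname{Frac}(R)$ but generally non-local, so I would reduce to the essentially smooth case via Popescu's Theorem \ref{popescu} applied to $R \to A$. Writing $A = \fcolim A_{\lambda}$ with $A_{\lambda}$ local essentially smooth over $R$ gives $A[\ovpi] = \fcolim A_{\lambda}[\ovpi]$; the Gillet-Levine theorem supplies the corresponding quasi-isomorphism at each $A_{\lambda}$, whence $H^p(\mcg_n^{\cdot}(A_{\lambda}[\ovpi])) = 0$ for $p \geq 1$ and $H^0 = K_n(A_{\lambda}[\ovpi])$. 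Because every $A_{\lambda}[\ovpi]$ and $A[\ovpi]$ is regular over $\operatorname{Frac}(R)$, Corollary \ref{panin_sheaf} converts Gersten cohomology into Zariski sheaf cohomology of $\mck_n$, and Proposition \ref{panin_limit} transports the vanishing to the colimit, yielding the analogous cohomology for $A[\ovpi]$.

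With both outer cohomologies determined, I construct the desired quasi-isomorphism $\phi: C^{\cdot} \to \mcg_n^{\cdot}(A)$ explicitly. In degree $0$, $\phi^0$ is the flat pullback $K_n(A[\ovpi]) \to K_n(\operatorname{Frac}(A)) = \mcg_n^0(A)$; in degree $1$, $\phi^1$ is the flat pullback $K_{n-1}(A/\pi A) \to K_{n-1}(\operatorname{Frac}(A/\pi A))$ included as the summand of $\mcg_n^1(A)$ at the height-one prime $(\pi)$. Using the two Gersten conjectures just established, every $\fp$-component of $d^0 \circ \phi^0$ vanishes for $\fp \neq (\pi)$ and $d^1 \circ \phi^1 = 0$, while functoriality of the $K$-theory localization sequence along $\Spec A_{(\pi)} \to \Spec A$ identifies the $(\pi)$-component of $d^0 \circ \phi^0$ with $\phi^1 \circ \partial$; hence $\phi$ is a chain map. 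The long exact sequence of the filtration SES, fed by the cohomology computations above, collapses to the four-term exact sequence $0 \to H^0(\mcg_n^{\cdot}(A)) \to K_n(A[\ovpi]) \stackrel{\partial}{\longrightarrow} K_{n-1}(A/\pi A) \to H^1(\mcg_n^{\cdot}(A)) \to 0$ with all higher cohomology zero, from which one checks directly that $\phi$ induces isomorphisms on cohomology.

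The main obstacle is the generic-fiber calculation: $A[\ovpi]$ is not local, so Panin's theorem does not apply to it directly, and the Gersten complex is not functorial under the non-flat transition maps of any Popescu approximation. The resolution hinges on invoking Gillet-Levine in its full quasi-isomorphism form for essentially smooth $A_{\lambda}$ (not merely the weaker vanishing of $\mcg_n^{\cdot}(A_{\lambda})$ in degrees $\geq 2$), together with Panin's reinterpretation of Gersten cohomology as Zariski sheaf cohomology and the Grothendieck limit formalism of Proposition \ref{panin_limit}, which together bypass this functoriality failure.
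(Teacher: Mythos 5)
Your proposal is correct and follows essentially the same route as the paper: you decompose $\mcg_n^{\cdot}(A)$ into the Gersten complexes of the special and generic fibres (via the Section~\ref{gersten_filter} filtration, which for $\dim R = 1$ is equivalent to the paper's two-row double complex), handle the closed fibre $A/\pi A$ with Panin's theorem, and handle the non-local generic fibre $A[\ovpi]$ via Popescu + Gillet--Levine at each finite stage $A_\lambda[\ovpi]$ + Corollary~\ref{panin_sheaf} + Proposition~\ref{panin_limit}. The only cosmetic difference is that you construct the chain map and verify the quasi-isomorphism through the long exact sequence of the filtration SES, where the paper phrases the same computation as a map of double complexes together with row-wise acyclicity.
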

\begin{proof}By Popescu's Theorem \ref{popescu} we can write $A$ as a filtered colimit of local rings $A_{\lambda}$, each of which is essentially smooth over $R$. Put $X = \Spec A$ and $X_{\lambda} = \Spec A_{\lambda}$. Let $X[\ovpi] = \Spec A[\ovpi]$, $\overline{X} = \Spec A/\pi A$, and define $X_{\lambda}[\ovpi]$ and $\overline{X}_{\lambda}$ similarly.

As in the proof of \cite[Cor. 7]{GilletLevine}, the double-complex from subsection \ref{double_complex} receives a map as indicated below:
\begin{equation}\tag{*}
\xymatrixcolsep{5mm}
\begin{array}{l@{\hspace{1pt}}l@{\hspace{1pt}}l@{\hspace{1pt}}}
\left(\vcenter{\xymatrix{
K_n(X[\ovpi]) \ar_{\partial}[d] \\
K_{n-1}(\overline{X})
}}\right)
& 
\longrightarrow
& 
\left( \vcenter{\xymatrix{
\ds \bigoplus_{\mathclap{x \in X[\ovpi]^0}}K_n(k(x)) \to \cdots \ar[r] \ar[d] & \ds \bigoplus_{\mathclap{x \in X[\ovpi]^{n-1}}}K_1(k(x)) \ar[r] \ar[d] & \ds \bigoplus_{\mathclap{x \in X[\ovpi]^n}}K_0(k(x)) \ar[d] \\
\ds \bigoplus_{\mathclap{x \in \overline{X}^0}}K_{n-1}(k(x)) \to \cdots \ar[r] & \ds \bigoplus_{\mathclap{x \in \overline{X}^{n-1}}}K_{0}(k(x)) \ar[r] & 0
}}\right)
\end{array}
\end{equation}

Our claim is that the induced map on the totalizations is a quasi-isomorphism. On the right-hand side of (*), the top row is simply the Gersten complex $\mcg_n^{\cdot}(X[\ovpi])$ while the bottom row is $\mcg_{n-1}(\overline{X})$. It will suffice to show that these complexes resolve $K_n(X[\ovpi])$ and $K_{n-1}(\overline{X})$ respectively. Since $\overline{X}$ is an equicharacteristic, regular local scheme, Panin's Theorem \ref{panin} produces the desired quasi-isomorphism $K_{n-1}(\overline{X}) \stackrel{\sim}{\longrightarrow} \mcg_{n-1}(\overline{X})$. 

We are left to proving the analogous statement for $X[\ovpi]$. Here we use the fact that $X[\ovpi] = \ds \varprojlim_{\lambda \in \Lambda}X_\lambda$. Since each $X_{\lambda}$ is essentially smooth over the DVR $R$, it follows that the generic fibre $X_{\lambda}[\ovpi]$ satisfies Gersten's Conjecture by \cite[4(c)]{GilletLevine} or \cite{Bloch}. As they are regular schemes defined over a field, we have by Corollary \ref{panin_sheaf} that
\[ \textstyle H^q_{\Zar}(X_{\lambda}[\ovpi], \mck_n) = H^q(\mcg_n^{\cdot}(X_{\lambda}[\ovpi])) = \left\{ \begin{array}{ll} K_n(X_{\lambda})[\ovpi] & q = 0 \\ 0 & q > 0 \end{array}.\right. \]
On the other hand, $X[\ovpi]$ is likewise regular and equicharacteristic, so again by Corollary \ref{panin_sheaf}, we have that $H^q_{\Zar}(X[\ovpi],\mck_n) = H^q(\mcg_n^{\cdot}(X[\ovpi]))$. Invoking Corollary \ref{panin_limit} shows that 
\[ \textstyle H^q_{\Zar}(X[\ovpi],\mck_n) = \ds \varinjlim_{\lambda \in \Lambda} \textstyle H^q_{\Zar}(X_{\lambda}[\ovpi], \mck_n) = 0 \mbox{ for } q > 0. \]

When $q = 0$, the isomorphism $K_n(X[\ovpi]) \stackrel{\sim}{\longrightarrow} H^0(X[\ovpi],\mck_n)$ follows from both Corollary \ref{panin_limit} and the isomorphism $\ds \varinjlim_{\lambda \in \Lambda} \textstyle K_n(X_{\lambda}[\ovpi]) \stackrel{\sim}{\longrightarrow} K_n(X[\ovpi])$ from \cite[VII.2.2]{Quillen} (see also the proof of \cite[Lem. 1.3]{Panin}).
\end{proof}

Proposition \ref{gl_improved} has an immediate global corollary.
\begin{cor}\label{unramified_bloch}(cf. \cite[Cor. 8]{GilletLevine}) Let $Y$ be an integral, regular Noetherian scheme of dimension $1$ with generic point $\eta$. Suppose $X \to Y$ is geometrically regular. Then there is a quasi-isomorphism of complexes on $X_{\Zar}$ for all $n \geq 1$:
\[ \left( 0 \to (u_{\eta})_*(\mck_{n,X_{\eta}}) \stackrel{\partial}{\longrightarrow} \bigoplus_{y \in Y^1}(u_y)_*(\mck_{n-1, X_y}) \to 0 \right) \stackrel{\sim}{\longrightarrow} \dumcg_{n,X}^{\cdot} \]
(Here, $u_t$ is the natural inclusion of the fibre $X_t \to X$.)
\end{cor}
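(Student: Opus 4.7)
The plan is to verify that the map is a quasi-isomorphism on Zariski stalks, reducing the global statement to the affine cases already handled by Proposition \ref{gl_improved} (fibre over a closed point) and Panin's Theorem \ref{panin} (fibre over the generic point).

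To this end, fix $x \in X$ with image $y \in Y$. Since $Y$ is integral, regular, and one-dimensional, either $y = \eta$ is the generic point with $\mco_{Y,y} = k(\eta)$ a field, or $y \in Y^1$ is closed with $\mco_{Y,y}$ a DVR whose fraction field is $k(\eta)$. In both cases the map $\mco_{Y,y} \to \mco_{X,x}$ is geometrically regular, so Proposition \ref{gl_improved} (when $y \in Y^1$) or the hypothesis that $\mco_{X,x}$ be regular and equicharacteristic (when $y = \eta$) applies. By Lemma \ref{complex_stalk}(i), the stalk of $\dumcg_{n,X}^{\cdot}$ at $x$ is $\mcg_n^{\cdot}(\mco_{X,x})$, so it suffices to identify the stalk of the two-term source complex with the appropriate two-term complex on $\Spec \mco_{X,x}$.

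First I would compute the stalks of the source. The inclusion $u_\eta: X_\eta \hookrightarrow X$ is an open immersion (its image is the complement of the closed set $f^{-1}(Y \setminus \{\eta\})$), and each $u_y$ is a closed immersion. Using that $K$-theory of Noetherian affine schemes commutes with filtered inverse limits \cite[VII.2.2]{Quillen}, one obtains
\[ \bigl((u_\eta)_*\mck_{n,X_\eta}\bigr)_x = K_n\bigl(\mco_{X,x} \otimes_{\mco_{Y,y}} k(\eta)\bigr), \]
which equals $K_n(\mco_{X,x})$ if $y = \eta$ and $K_n(\mco_{X,x}[1/\pi])$ (with $\pi$ a uniformizer at $y$) if $y \in Y^1$. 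Similarly, $(u_{y'})_*\mck_{n-1,X_{y'}}$ has stalk $K_{n-1}(\mco_{X,x}/\pi \mco_{X,x})$ when $y' = y \in Y^1$ and vanishes at $x$ in all other cases, since $X_{y'}$ is closed in $X$ and a neighbourhood of $x$ can be chosen disjoint from $X_{y'}$ whenever $y' \neq y$.

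The case analysis then proceeds as follows. If $y = \eta$, the source stalk collapses to the single term $K_n(\mco_{X,x})$ concentrated in degree zero; since $\mco_{X,x}$ is geometrically regular over the field $k(\eta)$ and hence an equicharacteristic regular local ring, Panin's Theorem \ref{panin} gives the required quasi-isomorphism to $\mcg_n^{\cdot}(\mco_{X,x})$. If $y \in Y^1$, the source stalk is the two-term complex
\[ 0 \to K_n(\mco_{X,x}[1/\pi]) \xrightarrow{\partial} K_{n-1}(\mco_{X,x}/\pi \mco_{X,x}) \to 0, \]
and Proposition \ref{gl_improved} applied to the geometrically regular map $\mco_{Y,y} \to \mco_{X,x}$ gives the required quasi-isomorphism to $\mcg_n^{\cdot}(\mco_{X,x})$.

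The main thing to check is that the globally defined maps in the corollary restrict at stalks to the comparison maps used in Proposition \ref{gl_improved}; this amounts to verifying that the sheaf-level boundary $\partial$ (which is the Zariski sheafification of the localization boundary for the pair $X_y \hookrightarrow X \hookleftarrow X_\eta$) commutes with passage to the stalk at $x$ and agrees there with the affine localization boundary for $\mco_{X,x}/\pi \mco_{X,x} \hookrightarrow \Spec \mco_{X,x} \hookleftarrow \Spec \mco_{X,x}[1/\pi]$. This is a standard naturality check for Quillen's localization sequence under flat base change, and once it is in hand the stalkwise quasi-isomorphism follows from the two cases above.
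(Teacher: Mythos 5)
Your strategy — reduce to a stalkwise check and invoke Proposition \ref{gl_improved} over a closed point and Panin's theorem over the generic point — is exactly the strategy of the paper's proof, which organizes the stalk comparison by first exhibiting $\dumcg_{n,X}^{\cdot}$ as the totalization of a two-row double complex of sheaves $(u_{\eta})_*\dumcg_{n,X_{\eta}}^{\cdot} \to \bigoplus_y (u_y)_*\dumcg_{n-1,X_y}^{\cdot}$ and mapping the two-term complex into it. So the overall plan is correct, but two points need repair.

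First, it is false that $u_{\eta}:X_{\eta}\to X$ is an open immersion, and your parenthetical justification fails: $Y\setminus\{\eta\}$ is the set of closed points of $Y$, which is closed only when $Y$ has finitely many points (e.g.\ for $Y=\Spec\mathbb{Z}$ or $\mathbb{P}^1_k$ the generic point is not open). The map $u_{\eta}$ is only a pro-open immersion, $X_{\eta}=\varprojlim_{V\ni\eta}f^{-1}(V)$. This does not break the formula $\bigl((u_{\eta})_*\mcf\bigr)_x=\varinjlim_{U\ni x}\mcf(U\cap X_{\eta})$, which holds for pushforward along any morphism, but the claim as stated is wrong and should not appear.

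Second, and more substantively, the appeal to \cite[VII.2.2]{Quillen} does not by itself give $\bigl((u_{\eta})_*\mck_{n,X_{\eta}}\bigr)_x \cong K_n(\mco_{X,x}[1/\pi])$ when $f(x)\in Y^1$. Quillen's continuity theorem governs the $K$-theory \emph{presheaf}, while $\mck_{n,X_{\eta}}$ is the Zariski \emph{sheafification}, and on an open affine $W$ the sheaf sections $\mck_n(W)$ are $H^0(\mcg_n^{\cdot}(W))$, which need not coincide with $K_n(W)$. Since the opens $U\cap X_{\eta}$ do not shrink to a single point of $X_{\eta}$ when $x\notin X_{\eta}$, you cannot just identify the colimit with a stalk. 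What closes this gap is precisely the Popescu/Panin/Grothendieck-limit machinery that the paper runs in the proof of Proposition \ref{gl_improved}: one uses Proposition \ref{panin_limit} to commute $H^0_{\Zar}(-,\mck_n)$ past the cofiltered limit of affines, and Corollary \ref{panin_sheaf} together with the fact that $\mco_{X,x}[1/\pi]$ is a filtered colimit of rings essentially smooth over a field to identify $H^0_{\Zar}(\Spec\mco_{X,x}[1/\pi],\mck_n)$ with $K_n(\mco_{X,x}[1/\pi])$. With that substitution your stalk computation is correct and the remaining naturality check for $\partial$ is indeed routine.
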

\begin{proof}We return to the sheaf-theoretic Gersten complexes defined in Section \ref{nisnevich}. For any $t \in Y$ and any open $U \subset X$, the sections of $(u_t)_*\left( \dumcg^{\cdot}_{n,X_t} \right)$ over $U$ simply recover the Gersten complex $\mcg_n(U_t)$. Furthermore, just like the double complex introduced in Section \ref{double_complex}, $\mcg_n(U)$ is seen to be the totalization of the double complex
\[ 
\xymatrixcolsep{3.5mm}
\xymatrix{
\ds 0 \ar[r] & \ds \bigoplus_{\mathclap{x \in U_{\eta}^0}}K_n(k(x)) \ar[d] \ar[r] & \ds \bigoplus_{\mathclap{x \in U_{\eta}^1}}K_{n-1}(k(x)) \ar[r] \ar[d] & \cdots \ar[r] & \ds \bigoplus_{\mathclap{x \in U_{\eta}^{n-1}}}K_{1}(k(x)) \ar[d] \ar[r] & \ds \bigoplus_{\mathclap{x \in U_{\eta}^n}}K_{0}(x) \ar[d] \ar[r] & 0\\
\ds 0 \ar[r] & \ds \bigoplus_{\mathclap{\substack{x \in U_y^0 \\ y \in Y^1}}}K_{n-1}(k(x)) \ar[r] & \ds \bigoplus_{\mathclap{\substack{x \in U_y^1 \\ y \in Y^1}}}K_{n-2}(k(x)) \ar[r] & \cdots \ar[r] & \ds \bigoplus_{\mathclap{\substack{x \in U_y^{n-1} \\ y \in Y^1}}}K_{0}(k(x)) \ar[r] & 0 \ar[r] & 0\\
} \]
The top row is just $\mcg_n^{\cdot}(U_{\eta})$, and by the argument in Lemma \ref{filter3}, the bottom row decomposes into $\ds \bigoplus_{y \in Y^1}\mcg_{n-1}^{\cdot}(U_y)$. Thus, we see that $\dumcg^{\cdot}_{n,X}$ arises as the totalization of a double complex of sheaves on $X_{\Zar}$ which we shall denote by
\[ 0 \to (u_{\eta})_*\left(\dumcg_{n,X_{\eta}}^{\cdot}\right) \to \bigoplus_{y \in Y^1}(u_y)_* \left( \dumcg_{n-1,X_y}^{\cdot} \right) \to 0. \]
As in Proposition \ref{gl_improved}, we have a map of double complexes:
\[
\begin{array}{l@{\hspace{1pt}}l@{\hspace{1pt}}l@{\hspace{1pt}}}
\left(\vcenter{\xymatrix{
(u_{\eta})_*\left(\mck_{n,X_{\eta}}\right) \ar_{\partial}[d] \\
\ds \bigoplus_{y \in Y^1}(u_y)_*\left(\mck_{n-1,X_y}\right)
}}\right)
& 
\longrightarrow
& 
\left( \vcenter{\xymatrix{
(u_{\eta})_*\left(\dumcg_{n,X_{\eta}}^{\cdot}\right) \ar[d] \\
\ds \bigoplus_{y \in Y^1}(u_y)_* \left( \dumcg_{n-1,X_y}^{\cdot} \right)
}}\right)
\end{array}
\]
This map induces a quasi-isomorphism of total complexes: If we fix $x \in X_y$ for some $y \in Y^1$, then $\mco_{X,x}$ is geometrically regular over the DVR $\mco_{Y,y}$; so taking the Zariski stalk of the above just recovers the diagram (*) from Proposition \ref{gl_improved}. If $x \in X_{\eta}$, then $\mco_{X,x}$ is an equicharacteristic regular local ring. Taking the stalk at $x$ in this case will cause the bottom row to vanish and the top to become $K_n(\mco_{X,x}) \stackrel{\sim}{\rightarrow} \mcg_n(\mco_{X,x})$.
\end{proof}

\subsection{Gersten's Conjecture for Unramified Regular Local Rings}Recall that a mixed characteristic regular local ring $(A,\fm_A,L)$ is unramified if $p \in \fm_A - \fm_A^2$ for some prime number $p$. This is enough to guarantee that the map $\mathbb{Z}_{(p)} \to A$ is geometrically regular (see Example \ref{geomregexmps}). Our goal in this section is to reduce the general Gersten Conjecture for such rings $A$ to the following, ostensibly simpler one:

\begin{conj}[Gersten's Arithmetic DVR Conjecture]\label{gdvr} Let $(R,pR,k)$ be a DVR, essentially smooth over $\mathbb{Z}$. Then the transfer map $K_n(k) \to K_n(R)$ is zero for all $n \geq 0$.
\end{conj}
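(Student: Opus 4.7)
The plan is to first translate the statement via Quillen's localization sequence. Using devissage ($K_*(\mcm^1(R)) \cong K_*(k)$), the sequence reads
\[ \cdots \longrightarrow K_{n+1}(R[\ovp]) \stackrel{\partial}{\longrightarrow} K_n(k) \stackrel{\mathrm{tr}}{\longrightarrow} K_n(R) \longrightarrow K_n(R[\ovp]) \stackrel{\partial}{\longrightarrow} K_{n-1}(k) \longrightarrow \cdots. \]
The vanishing of $\mathrm{tr}$ in every degree is equivalent to the surjectivity of $\partial$ in every degree, i.e.\ to the vanishing of $H^1(\mcg_n^{\cdot}(R))$ for all $n$, which is the full Gersten Conjecture for $R$ (the $n=0$ case being automatic from $[k] = [R] - [R] = 0$ in $K_0(R)$). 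So the concrete goal is: given $\alpha \in K_n(k)$, exhibit $\beta \in K_{n+1}(R[\ovp])$ with $\partial\beta = \alpha$.

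First I would reduce the class of DVRs under consideration. Writing $R = A_{\fq}$ for a smooth $\mathbb{Z}$-algebra $A$ and a height-one prime $\fq \ni p$, a Noether-normalization in the style of Lemma \ref{structure_zariski} (applied over the base $\mathbb{Z}_{(p)}$) should produce a subring $B = \mathbb{Z}_{(p)}[t_1,\ldots,t_d]_{\fp} \hookrightarrow R$ which is $\fm_R$-smooth with $B/pB \to k$ finite. Combined with the Henselization lemma \ref{structure_nisnevich} and the Popescu-type approximation arguments from Section \ref{gersten_filter}, this should allow the reduction to $R$ of essentially polynomial type over $\mathbb{Z}_{(p)}$, with $k$ finitely generated over the prime field.

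Second, I would exploit the Milnor-theoretic boundary. The tame boundary $\partial^M: K^M_{n+1}(R[\ovp]) \twoheadrightarrow K^M_n(k)$ is surjective by the explicit section $\{u_1,\ldots,u_n\} \mapsto \{p,\tilde u_1,\ldots,\tilde u_n\}$ for unit lifts $\tilde u_i \in R^{\times}$, and naturality of $K^M_* \to K_*$ then handles every $\alpha$ in the image of Milnor K-theory. When $k = \mathbb{F}_q$ is finite, one reduces to the cyclic summands $K_{2i-1}(\mathbb{F}_q) = \mathbb{Z}/(q^i-1)$ (the even degrees vanish by Quillen's computation), and these should be lifted using explicit cyclotomic classes in the $K$-theory of unramified extensions of $\mathbb{Z}_{(p)}$ contained in (the completion of) $R$.

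The principal obstacle is handling classes in $K_n(k)$ outside the image of Milnor K-theory when $k$ has positive transcendence degree over $\mathbb{F}_p$. Here the Beilinson-Parshin Conjecture enters unavoidably: it is precisely the structural statement controlling the non-Milnor part of $K_*$ of function fields of smooth $\mathbb{F}_p$-varieties, and any general proof of Conjecture \ref{gdvr} will likely require motivic-cohomological input --- Voevodsky's norm residue theorem together with the motivic-to-$K$-theory spectral sequence, or Geisser-Levine for $p$-adic motivic cohomology of smooth schemes over $\mathbb{F}_p$. Lacking this, one should expect an unconditional argument only when $\trdeg_{\mathbb{F}_p}k$ is small, in line with the paper's concluding remarks.
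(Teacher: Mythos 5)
This is a \emph{conjecture}, so there is no proof of it in the paper to compare against; what the paper offers is a reduction (Theorem \ref{unramifiedconj}, which shows it implies the full Gersten Conjecture for unramified regular local rings) and several partial results. Your sketch correctly identifies the equivalent reformulation via the localization sequence, and your closing paragraph rightly flags the Beilinson--Parshin obstruction, so the overall diagnosis is sound. Two substantive comments on how your proposed route diverges from and falls short of what the paper actually does for the special cases it can handle.

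First, your opening reduction step is spurious. If $R$ is essentially smooth over $\mathbb{Z}$, then $k = R/pR$ is already finitely generated over $\mathbb{F}_p$ by the Nullstellensatz; no normalization in the style of Lemma \ref{structure_zariski} is needed, and attempting to further ``reduce'' $R$ to ``essentially polynomial type'' runs against the fact that such moves do not preserve the transfer map one is trying to kill. (The Popescu-type approximation in Section \ref{gersten_filter} is used to \emph{enlarge} $R$-algebras, and the whole point of Theorem \ref{unramifiedconj} is to pass from a general unramified $A$ \emph{down} to the finitely-generated case --- not to simplify a ring already in that class.)

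Second, and more significantly, you omit the paper's actual workhorse for the unconditional cases, namely the finite-coefficients theorem (\ref{fcdvr}, due to Gillet and Geisser--Levine): the transfer $K_n(k;\modl)\to K_n(R;\modl)$ vanishes for \emph{every} DVR, and via the map $\Psi$ of Lemma \ref{fincoeff} whose image is the $\ell$-torsion, one obtains Corollary \ref{dvrintegral}: if $K_n(k)$ is torsion, the integral transfer $K_n(k)\to K_n(R)$ vanishes. This is a much cleaner and more robust device than your proposal of lifting cyclotomic classes through the $K$-theory of unramified extensions of $\mathbb{Z}_{(p)}$; in particular, for $k=\mathbb{F}_q$ it makes the conjecture immediate (since $K_n(\mathbb{F}_q)$ is finite for $n>0$), sidestepping the delicate explicit construction you sketch. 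Your Milnor-theoretic section $\{u_1,\ldots,u_n\}\mapsto\{p,\tilde u_1,\ldots,\tilde u_n\}$ is correct as far as it goes and corresponds exactly to the paper's handling of $n=2$ via Matsumoto's theorem, but it only covers the image of $K^M_*$ and so is subsumed by the torsion argument in all cases the paper actually resolves. The net effect is that your proposal reaches the same conceptual endpoint (Beilinson--Parshin governs the hard part) but misses the one tool that yields genuine unconditional results.
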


When $R$ is a DVR, the equivalence between Conjecture \ref{gdvr} and the Gersten Conjecture for $R$ follows immediately from Quillen's localization theorem. As the name suggests, Conjecture \ref{gdvr} is amenable to attack via the methods of arithmetic geometry as we shall demonstrate after proving the following:

\begin{thm}\label{unramifiedconj}Gersten's Arithmetic DVR Conjecture implies the full Gersten Conjecture for unramified regular local rings $(A,\fm_A, L)$.
\end{thm}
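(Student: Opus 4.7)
The plan is to combine Proposition \ref{gl_improved} with Popescu's Theorem to reduce Gersten's Conjecture for an unramified regular local ring $A$ to a transfer-vanishing statement, and then to deduce that vanishing from the Arithmetic DVR Conjecture via a diagram chase anchored on a natural height-one localization of each Popescu approximant.

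First I would use that $\mathbb{Z}_{(p)} \to A$ is geometrically regular (Example \ref{geomregexmps}(d)) to invoke Proposition \ref{gl_improved}, yielding a quasi-isomorphism
$$\mcg_n^{\cdot}(A) \simeq \bigl[K_n(A[\ovp]) \xrightarrow{\partial} K_{n-1}(A/pA)\bigr].$$
From the Quillen localization sequence for the closed immersion $\Spec A/pA \hookrightarrow \Spec A$ (and using that both $A$ and $A/pA$ are regular, so $K \cong K'$), Gersten's Conjecture for $A$ is equivalent to the vanishing of the transfer $i_{*}: K_m(A/pA) \to K_m(A)$ for every $m \geq 0$. I would then invoke Popescu's Theorem to write $A = \fcolim A_{\lambda}$ with each $A_{\lambda}$ a local ring essentially smooth over $\mathbb{Z}_{(p)}$ (hence over $\mathbb{Z}$); since $K$-theory of Noetherian affine schemes commutes with filtered colimits \cite[VII.2.2]{Quillen}, $i_{*}$ is the colimit of transfers $i_{*,\lambda}: K_m(A_{\lambda}/pA_{\lambda}) \to K_m(A_{\lambda})$, and applying Proposition \ref{gl_improved} again to each $A_{\lambda}$ shows that it suffices to establish Gersten's Conjecture for each such $A_{\lambda}$.

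The heart of the argument is the DVR-to-$A_{\lambda}$ reduction. For $A_{\lambda}$ local essentially smooth over $\mathbb{Z}_{(p)}$, I would look at the localization $R_{\lambda} := (A_{\lambda})_{(pA_{\lambda})}$ at the height-one prime $pA_{\lambda}$; this ideal is indeed prime because $A_{\lambda}/pA_{\lambda}$ is a regular local domain (the equicharacteristic fibre of a smooth map), so $R_{\lambda}$ is a DVR with uniformizer $p$, essentially smooth over $\mathbb{Z}$. The Arithmetic DVR Conjecture then supplies the vanishing of the transfer $K_m(\kappa_{\lambda}) \to K_m(R_{\lambda})$, where $\kappa_{\lambda} = \mathrm{Frac}(A_{\lambda}/pA_{\lambda})$ is the residue field of $R_{\lambda}$. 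Panin's Theorem \ref{panin} applied to the equicharacteristic $A_{\lambda}/pA_{\lambda}$ furnishes the injection $K_m(A_{\lambda}/pA_{\lambda}) \hookrightarrow K_m(\kappa_{\lambda})$, so the image of $i_{*,\lambda}$ is forced into $\ker\bigl(K_m(A_{\lambda}) \to K_m(R_{\lambda})\bigr)$ by commutativity of the obvious flat-pullback square.

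The hardest step will be promoting this DVR-level vanishing to the honest vanishing of $i_{*,\lambda}$ on $K_m(A_{\lambda})$, since the localization $K_m(A_{\lambda}) \to K_m(R_{\lambda})$ need not be injective. To close the gap I would apply the filtration spectral sequence of Proposition \ref{ss_filtration} to $\Spec A_{\lambda} \to \Spec \mathbb{Z}_{(p)}$: because $\dim \mathbb{Z}_{(p)} = 1$, only two columns are nonzero, and their $E_1$ entries are the Gersten cohomologies of the equicharacteristic fibres $A_{\lambda}[\ovp]$ and $A_{\lambda}/pA_{\lambda}$. Panin's sheaf-theoretic reformulation (Corollary \ref{panin_sheaf}) and the limit theorem (Proposition \ref{panin_limit}), combined with the Arithmetic DVR Conjecture governing the $d_1$ differential at the generic point of $\Spec \mathbb{Z}_{(p)}$, should then pin down $H^1(\mcg_n^{\cdot}(A_{\lambda})) = 0$ and $H^0(\mcg_n^{\cdot}(A_{\lambda})) = K_n(A_{\lambda})$, completing the reduction and hence the theorem.
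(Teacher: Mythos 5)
Your proposal correctly reproduces the opening moves of the paper's proof: using Proposition \ref{gl_improved} plus the localization sequence to reduce Gersten's Conjecture for $A$ to the vanishing of the transfer $K_m(A/pA)\to K_m(A)$, then invoking Popescu's Theorem to reduce to the vanishing of the transfers $K_m(A_\lambda/pA_\lambda)\to K_m(A_\lambda)$ for each essentially smooth approximant $A_\lambda$. (Note, though, that you gloss over why the transfer diagram commutes when passing from $A_\lambda$ to $A$: the vertical maps $A_\lambda\to A$ are \emph{not} flat, so compatibility of $K'$-theory pushforward is not automatic. The paper verifies this via the Tor-independence of $A$ and $A_\lambda/pA_\lambda$ over $A_\lambda$, using that $p$ is a regular parameter in both $A_\lambda$ and $A$, and appealing to \cite[VII.2.11]{Quillen}. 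You should do the same.)

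The real gap is in your ``DVR-to-$A_\lambda$'' step. You correctly identify $R_\lambda=(A_\lambda)_{(pA_\lambda)}$ and use Panin plus the Arithmetic DVR Conjecture to conclude that the image of the transfer $i_{*,\lambda}$ lands inside $\ker\bigl(K_m(A_\lambda)\to K_m(R_\lambda)\bigr)$. But, as you yourself flag, that kernel need not be zero, and your proposed repair does not work: applying Proposition \ref{ss_filtration} to $\Spec A_\lambda\to\Spec\mathbb{Z}_{(p)}$ merely reproduces the two-term complex $K_n(A_\lambda[\frac1p])\xrightarrow{\partial}K_{n-1}(A_\lambda/pA_\lambda)$ and re-establishes the (already available) statement that Gersten's Conjecture for $A_\lambda$ is equivalent to the transfer vanishing. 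No new information about the differential $\partial$ is produced --- you have reformulated the goal, not proved it. The step you are missing is precisely \cite[Cor.~6]{GilletLevine}: for $A_\lambda$ local and essentially smooth over a DVR $V$, Gersten's Conjecture for the DVR $(A_\lambda)_{(\pi)}$ implies Gersten's Conjecture for $A_\lambda$ itself. That ``unlocalization'' result (which rests on the kind of normalization arguments appearing in Section~\ref{finite_case}, not on a spectral-sequence reshuffle) is what promotes the generic-point vanishing supplied by Conjecture~\ref{gdvr} to the full transfer vanishing on $K_m(A_\lambda/pA_\lambda)$. With that citation in place, your argument closes and matches the paper's proof.
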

\begin{proof}We shall assume that $A$ has mixed characteristic $p > 0$. By Proposition \ref{gl_improved}, we have the quasi-isomorphism 
\[ \textstyle \left( 0 \to  K_n(A[\ovp]) \stackrel{\partial}{\longrightarrow} K_{n-1}(A/pA) \to 0 \right) \stackrel{\sim}{\longrightarrow} \mcg_n^{\cdot}(A), \]
so we need only show that $\partial$ is surjective with kernel equal to $K_n(A)$. From the localization sequence
\[\textstyle \cdots \to K_n(A/pA) \to K_n(A) \to K_n(A[\ovp]) \stackrel{\partial}{\longrightarrow} K_{n-1}(A/pA) \to K_{n-1}(A) \to  \cdots \]
we see that proving Gersten's Conjecture for $A$ amounts to proving that the transfer map $t: K_n(A/pA) \to K_n(A)$ is zero for all $n \geq 0$. 

Once again by Popescu's Theorem \ref{popescu}, we can write $\ds A = \fcolim A_{\lambda}$ with each $A_{\lambda}$ a local, essentially smooth $\mathbb{Z}_{(p)}$-algebra. For a fixed $\lambda$, we consider the pushout diagram:
\[
\xymatrix{
A_\lambda \ar[r] \ar[d] & A_{\lambda}/pA_{\lambda} \ar[d] \\
A \ar[r] & A/pA
}
\]
Note that the vertical maps have finite $\Tor$-dimension by virtue of all rings being regular; the horizontal maps correspond to closed immersions of the corresponding schemes. Since $p$ is a regular parameter on $A_{\lambda}$, we may identify $\Tor_q^{A_{\lambda}}(A,A_{\lambda}/pA_{\lambda})$ with the $q$-th Koszul homology group $H_q(p,A)$. As $p$ is likewise a regular parameter on $A$, these must vanish for $q > 0$, and we may conclude that $A$ and $A_{\lambda}/p A_{\lambda}$ are $\Tor$-independent over $A_{\lambda}$. Using \cite[VII.2.11]{Quillen}, we have a commutative diagram
\[
\xymatrix{
K_n(A_\lambda/pA_{\lambda}) \ar^{t}[r] \ar[d] & K_n(A_{\lambda}) \ar[d] \\
K_n(A/pA) \ar^{t}[r] & K_n(A)
}
\]
(Here, we are using the regularity of our rings to identify $K$-theory with $K'$-theory.)

Since $A/pA = \fcolim A_{\lambda}/p A_{\lambda}$, every class $\alpha \in K_n(A/pA)$ lies in the image of $K_n(A_{\lambda'}/p A_{\lambda'}) \to K_n(A/pA)$ for some $\lambda'$. It therefore suffices to show that every transfer map $K_n(A_{\lambda}/p A_{\lambda}) \to K_n(A_{\lambda})$ is zero.

By \cite[Cor. 6]{GilletLevine}, knowing Gersten's Conjecture for the DVR $(A_{\lambda})_{(p)}$ implies the conjecture for $A_{\lambda}$, itself. By construction, each of these DVRs is essentially of finite-type over $\mathbb{Z}$, so if we assume Conjecture \ref{gdvr}, then each $A_{\lambda}$ will indeed satisfy Gersten's Conjecture. The transfer map $t: K_n(A_{\lambda}/pA_{\lambda}) \to K_n(A_{\lambda}) \cong K_n(\mcm^0(A_{\lambda}))$ factors through $K_n(\mcm^1(A_\lambda))$ and therefore must vanish by \cite[VII.5.6]{Quillen}. 
\end{proof}

\subsection{Finite Coefficients}We very briefly review basic properties of $K$-theory with finite coefficients. For details, see \cite[IV.2]{Kbook}. If $\mce$ is an exact category, we denote by $K(\mce)$ the space whose $n$th homotopy group is $K_n(\mce)$ (to be concrete, take $K(\mce) = \Omega BQ\mce$ \cite[II]{Quillen}). For $\ell \in \mathbb{Z}$, define the Moore space $P^m(\mathbb{Z}/\ell \mathbb{Z})$ to be the cofibre of the unique (up to homotopy) degree-$\ell$ map on the sphere $S^{m-1} \stackrel{\ell}{\longrightarrow} S^{m-1}$. We then define $K_n(\mce;\modl) := [P^n(\modl),K(\mce)]$, where the brackets denote homotopy classes of basepoint-preserving maps. Since our model of $K(\mce)$ is an infinite loop space, $K_n(\mce;\modl)$ is an abelian group for all $n \geq 0$.

By the contravariance of $[-,K(\mce)]$, the map $S^n \to P^{n+1}(\modl)$ induces a natural map $\Psi: K_{n+1}(\mce;\modl) \to K_n(\mce)$; from the universal coefficients theorem, the image of $\Psi$ is known precisely:
\begin{lem}\label{fincoeff}The image of the map $\Psi: K_{n+1}(\mce;\modl) \to K_n(\mce)$ is precisely the subgroup $\Tor_1^{\mathbb{Z}}(K_n(\mce),\modl) = \left\{\alpha \in K_n(\mce): \ell \cdot \alpha = 0 \right\}$.
\end{lem}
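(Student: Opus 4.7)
The approach is to apply the contravariant functor $[-,K(\mce)]$ to the Puppe extension of the defining cofibre sequence
\[ S^n \stackrel{\ell}{\longrightarrow} S^n \longrightarrow P^{n+1}(\modl) \longrightarrow S^{n+1} \stackrel{\ell}{\longrightarrow} S^{n+1}. \]
Since the chosen model $K(\mce) = \Omega BQ\mce$ is an infinite loop space, the pointed-homotopy-class functor $[-,K(\mce)]$ lands in abelian groups and sends cofibre sequences of spaces to long exact sequences of abelian groups. Running this machine on the display above produces the five-term exact sequence
\[ K_{n+1}(\mce) \stackrel{\ell}{\longrightarrow} K_{n+1}(\mce) \longrightarrow K_{n+1}(\mce;\modl) \stackrel{\Psi}{\longrightarrow} K_n(\mce) \stackrel{\ell}{\longrightarrow} K_n(\mce), \]
where the outer maps are multiplication by $\ell$ and the middle map is, by definition, $\Psi$ (it is induced by the map $S^n \to P^{n+1}(\modl)$ in the cofibre sequence).

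From the exactness at the rightmost copy of $K_n(\mce)$, the image of $\Psi$ is precisely the kernel of multiplication by $\ell$, i.e. the $\ell$-torsion subgroup $\{\alpha \in K_n(\mce) : \ell \cdot \alpha = 0\}$. The identification of this $\ell$-torsion subgroup with $\Tor_1^{\mathbb{Z}}(K_n(\mce),\modl)$ is then purely algebraic: it is obtained by applying $K_n(\mce) \otimes_{\mathbb{Z}} -$ to the short exact sequence
\[ 0 \to \mathbb{Z} \stackrel{\ell}{\longrightarrow} \mathbb{Z} \to \modl \to 0 \]
and reading off the resulting Tor long exact sequence.

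The only genuinely non-formal ingredient is the passage from a cofibre sequence of pointed spaces to a long exact sequence of pointed-homotopy-class groups, which requires that $K(\mce)$ be at least a double loop space to make the sequence go out that far and have group structure on all entries. This is precisely the use of the infinite loop space structure on $K(\mce)$ that is already built into the definition of the groups $K_n(\mce;\modl)$. I do not anticipate any serious obstacle; the lemma is essentially just the universal coefficient short exact sequence for the $K$-theory spectrum of $\mce$, but we only need the image of the connecting map, which the five-term sequence gives directly without needing to invoke the full UCT.
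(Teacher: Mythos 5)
Your proof is correct and is essentially the standard derivation of the fact the paper simply attributes to the universal coefficients theorem; the paper gives no proof of its own, so your Puppe-sequence argument is just the canonical way of establishing what the paper cites.
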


Thanks to Gillet \cite{GilletDVR} and Geisser-Levine \cite[8.2]{GeisserLevine}, Gersten's Conjecture is known for all DVRs if one passes to finite coefficients:
\begin{thm}\label{fcdvr} Let $(R,\pi R,k)$ be a DVR. Then for all $\ell \in \mathbb{Z}$ and all $n \geq 0$, the transfer map $K_n(k;\modl) \to K_n(R;\modl)$ is zero.
\end{thm}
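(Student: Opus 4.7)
The plan is to decompose $\ell$ via the Chinese Remainder Theorem into primary components relative to the residue characteristic $p := \operatorname{char}(k)$, reducing to the two complementary cases $\ell = q^a$ with $q \neq p$ (Gillet's contribution \cite{GilletDVR}) and $\ell = p^a$ (Geisser--Levine's contribution \cite{GeisserLevine}).

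For the prime-to-$p$ case I would argue via Suslin--Gabber rigidity. Writing $R^h = \fcolim R_{\lambda}$ as a filtered colimit of local essentially \'etale $R$-algebras with residue field $k$, the mod-$\ell$ $K$-theory of $R^h$ is computed by the rigidity theorem: the reduction map induces an isomorphism $\operatorname{red}: K_n(R^h; \modl) \stackrel{\sim}{\longrightarrow} K_n(k; \modl)$. The projection formula ($i^{*}i_{*}$ computed via $\lambda_{-1}$ of the conormal line bundle) then yields
\[ \operatorname{red} \circ t = [R^h/\pi R^h] \cdot (-) = ([R^h] - [R^h]) \cdot (-) = 0, \]
where the vanishing of $[R^h/\pi R^h] \in K_0(R^h)$ comes from the Koszul resolution $0 \to R^h \stackrel{\pi}{\longrightarrow} R^h \to k \to 0$ on the regular parameter. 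Injectivity of $\operatorname{red}$ then forces the transfer to vanish on $R^h$. To descend back to $R$ itself, I would appeal to Nisnevich-local testing: with $\ell$ invertible on $R$, the relevant piece of the mod-$\ell$ $K$-theory presheaf satisfies enough descent (Gillet's input in \cite{GilletDVR}) that vanishing of the transfer can be checked at Nisnevich stalks, which are precisely the Henselizations.

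In the case $\ell = p^a$, rigidity is unavailable, and I would instead invoke the Geisser--Levine theorem \cite[Thm.~8.2]{GeisserLevine}. Their work identifies $K_n(k; \mathbb{Z}/p^a)$ with the logarithmic de Rham--Witt cohomology $H^n(k, W_a\Omega^n_{k,\log})$ via the $d\log$ map and, correspondingly, computes the relevant piece of $K_n(R; \mathbb{Z}/p^a)$ in terms of de Rham--Witt forms on $R$. In this explicit model, the transfer becomes a concrete calculation involving the Cartier operator and the regular parameter $\pi$, and one exhibits an explicit null-homotopy at the level of the de Rham--Witt complex.

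The main obstacle is clearly the $p$-primary case. The prime-to-$p$ argument is largely formal once rigidity and Nisnevich descent are granted, whereas the mod-$p$ statement sidesteps rigidity entirely and rests on the Bloch--Kato--Gabber identification of mod-$p$ Milnor $K$-theory with $d\log$-forms---the technical heart of \cite{GeisserLevine}. Any self-contained proof of Theorem \ref{fcdvr} would have to reproduce a substantial portion of that machinery.
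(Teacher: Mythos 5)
The paper itself treats Theorem~\ref{fcdvr} as a pure citation to \cite{GilletDVR} and \cite{GeisserLevine}; it offers no argument. So your high-level plan --- Chinese Remainder split into the $\ell$-prime-to-$p$ and $p$-primary cases, with Gillet handling the former and Geisser--Levine the latter --- agrees with the paper's attribution, and your identification of where the mathematical content lives (rigidity plus a trivial-normal-bundle self-intersection calculation on one side, the Bloch--Kato--Gabber and de Rham--Witt machinery on the other) is essentially right.

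That said, your reconstruction of the prime-to-$p$ case has a genuine gap in the descent from $R^h$ back to $R$. The self-intersection argument together with Gabber rigidity does show that the transfer vanishes on $K_n(k;\modl)\to K_n(R^h;\modl)$. But to conclude that $t_R\colon K_n(k;\modl)\to K_n(R;\modl)$ vanishes, you would need the base-change map $K_n(R;\modl)\to K_n(R^h;\modl)$ to be injective, and your appeal to ``Nisnevich-local testing'' implicitly assumes exactly this. Nisnevich descent for $K$-theory yields a descent spectral sequence, not a monomorphism of $K$-groups at a single stalk, so vanishing of a class cannot in general be detected on Henselizations. Gillet's actual proof does not take this route: he shows directly that the boundary map $\partial\colon K_n(F;\modl)\to K_{n-1}(k;\modl)$ (here $F=\operatorname{Frac}(R)$) is \emph{surjective}, constructing a (partial) section by lifting an element of $K_{n-1}(k;\modl)$ via rigidity to a finite stage $R_\lambda$ of the pro-\'etale tower, cupping with $\pi$, and then using transfers along the finite residue/fraction-field extensions and the compatibility of transfers with localization boundaries. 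The passage from $R^h$ to $R$ is exactly the delicate point, and it is handled by transfer arithmetic rather than by stalk-wise testing. Your proposal as written is therefore missing the key step that makes the prime-to-$p$ case work for non-Henselian $R$.

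Two minor remarks. First, the projection-formula line conflates two different vanishings: $\operatorname{red}\circ t$ is multiplication by $\lambda_{-1}(\mathcal N^\vee)=1-1=0$ \emph{in $K_0(k)$}, whereas $[R^h/\pi R^h]=[R^h]-[R^h]=0$ lives in $K_0(R^h)$; the second implies the vanishing of $t(1)\in K_0(R^h)$ but is not literally the operator acting on $K_n(k;\modl)$. The conclusion is still correct, but the bookkeeping should be fixed. Second, your description of the $p$-primary case is a reasonable gloss but somewhat over-specific: Geisser--Levine's argument in \cite[Thm.~8.2]{GeisserLevine} runs through the motivic spectral sequence and the identification of $K^M_n(k)/p^r$ with $W_r\Omega^n_{k,\log}$, rather than a hands-on null-homotopy in the de Rham--Witt complex; that detail does not affect the correctness of the citation, only the sketch.
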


It therefore follows from Theorems \ref{unramifiedconj} and \ref{fcdvr} that the full Gersten Conjecture for an arbitrary unramified regular ring is known in the case of finite coefficients:
\begin{cor}If $(A,\fm_A,L)$ is any unramified regular local ring, then for $\ell \in \mathbb{Z}$,
\begin{multline*} 0 \to K_n(A;\modl) \to \bigoplus_{\mathclap{\hgt \fp = 0}}K_n(k(\fp);\modl) \to \bigoplus_{\mathclap{\hgt \fp = 1}}K_{n-1}(k(\fp);\modl) \to \cdots  \\
\cdots \to \bigoplus_{\mathclap{\hgt \fp = n-1}}K_1(k(\fp);\modl) \to \bigoplus_{\mathclap{\hgt \fp = n}}K_0(k(\fp);\modl) \to 0
\end{multline*}
is exact.
\end{cor}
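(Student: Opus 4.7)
The plan is to run the argument of Theorem~\ref{unramifiedconj} verbatim with $K$-theory replaced by $K$-theory with $\modl$-coefficients. The Gersten complex $\mcg_n^{\cdot}(A;\modl)$ is defined by the same coniveau machinery as in Section~\ref{coniveau}: the Serre-quotient localization sequences of Quillen yield corresponding long exact sequences in $K_*(-;\modl)$ by applying $[P^{\bullet}(\modl),-]$, so the complex exists and its exactness is what must be proved.

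The first step is to establish a mod-$\ell$ analogue of Proposition~\ref{gl_improved}: if $A$ is geometrically regular over a DVR $(R,\pi R, k)$, then the two-term complex $\bigl(K_n(A[\ovpi];\modl)\xrightarrow{\partial} K_{n-1}(A/\pi A;\modl)\bigr)$ is quasi-isomorphic to $\mcg_n^{\cdot}(A;\modl)$. The proof carries over word-for-word, using Panin's Theorem~\ref{panin}, \cite[4(c)]{GilletLevine}, and the Grothendieck limit theorem in their finite-coefficient forms. These pass to $\modl$-coefficients because the integral Gersten conjecture for equicharacteristic regular local rings forces its mod-$\ell$ version (via the universal coefficient sequence underlying Lemma~\ref{fincoeff}), and because $K_*(-;\modl)$ still commutes with filtered colimits of Noetherian affine schemes.

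Given this quasi-isomorphism, the reduction in the proof of Theorem~\ref{unramifiedconj} proceeds identically. The mod-$\ell$ localization sequence
\[ \cdots \to K_n(A/pA;\modl) \xrightarrow{t} K_n(A;\modl) \to K_n(A[\ovp];\modl) \xrightarrow{\partial} K_{n-1}(A/pA;\modl) \to \cdots \]
shows that exactness of $\mcg_n^{\cdot}(A;\modl)$ reduces to the vanishing of the transfer $t$. Writing $A = \fcolim A_\lambda$ with each $A_\lambda$ local and essentially smooth over $\mathbb{Z}_{(p)}$ by Popescu (Theorem~\ref{popescu}), the $\Tor$-independence argument together with the base-change formula \cite[VII.2.11]{Quillen} reduces us to showing $t:K_n(A_\lambda/pA_\lambda;\modl)\to K_n(A_\lambda;\modl)$ vanishes for each $\lambda$. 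By \cite[Cor.~6]{GilletLevine}, whose proof transfers verbatim to finite coefficients, this follows from Gersten's conjecture for the essentially finite-type arithmetic DVR $(A_\lambda)_{(p)}$ with $\modl$-coefficients, which is precisely Theorem~\ref{fcdvr} and is known unconditionally.

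The only obstacle is bookkeeping: one must verify that every ingredient invoked --- in particular Proposition~\ref{gl_improved}, \cite[Cor.~6]{GilletLevine}, and the appeal to \cite[VII.5.6]{Quillen} identifying the transfer with the map induced by $\mathbf{i}:\mcm^1 \to \mcm^0$ --- has a proof that goes through unchanged for $K(-;\modl)$. This is routine, since all the inputs are localization sequences, compatibility of transfers with flat base change, and devissage, all of which live at the level of the spaces $K(\mce)$ and therefore also in $[P^{\bullet}(\modl), K(\mce)]$.
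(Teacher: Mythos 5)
Your proposal takes the same route as the paper: the corollary is stated without a separate proof precisely because it is the observation that the proof of Theorem~\ref{unramifiedconj} runs verbatim in $K(-;\modl)$, with Theorem~\ref{fcdvr} supplying the input that Conjecture~\ref{gdvr} supplied integrally. Your expansion of that sketch is essentially correct.

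One justification you offer in passing is not quite right and is worth flagging. You assert that the integral Gersten conjecture for equicharacteristic regular local rings ``forces its mod-$\ell$ version via the universal coefficient sequence.'' This implication is not formal: from the Bockstein sequence $0 \to K_n(\mce)/\ell \to K_n(\mce;\modl) \to K_{n-1}(\mce)[\ell] \to 0$, the vanishing of $\mathbf{i}_*$ on $K_n$ and on $K_{n-1}$ only shows that the induced map on $K_n(-;\modl)$ factors through a map $K_{n-1}(\mcm^{p+1})[\ell] \to K_n(\mcm^p)/\ell$, and there is no reason for that residual map to vanish on abstract grounds. What actually makes the finite-coefficient statements go through is the point you make in your last paragraph: Quillen's additivity argument (and the structure lemmas of Gillet--Levine) produce an equality $\mathbf{i}_* = (\mathbf{u}\mathbf{F})_* - (\mathbf{u}\mathbf{F})_*$ in the \emph{homotopy category}, i.e.\ a nullhomotopy of the map of spaces $K(\mcm^{p+1}) \to K(\mcm^p)$, and a nullhomotopic map dies under any homotopy invariant, including $[P^{\bullet}(\modl),-]$; Panin's limit argument is then re-run wholesale with $\mck_n$ replaced by the mod-$\ell$ $K$-sheaves, using that $K_*(-;\modl)$ still commutes with filtered limits of affine Noetherian schemes. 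So the conclusion that Proposition~\ref{gl_improved} and its inputs pass to $\modl$-coefficients is right, but the mechanism is the spectrum-level one, not a UCT diagram chase; you should drop the parenthetical and lean entirely on the spectrum-level argument you already supply.
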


More importantly, it is possible in some cases to extend the result of Theorem \ref{fcdvr} to integer coefficients:
\begin{cor}\label{dvrintegral}Let $(R,\pi R, k)$ be a DVR. Fix $n \geq 0$ and suppose that $K_n(k)$ is a torsion group. Then the transfer map $K_n(k) \to K_n(R)$ is zero.
\end{cor}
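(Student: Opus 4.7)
The plan is to bootstrap from Theorem \ref{fcdvr} (the finite-coefficient Gersten result for DVRs) to the integral statement by exploiting the torsion hypothesis on $K_n(k)$ together with the universal coefficient map $\Psi$ from Lemma \ref{fincoeff}. The key point is that $\Psi$ is a natural transformation on the K-theory of exact categories, and the transfer $t: K(\mcm^{\mathrm{fl}}(k)) \to K(\mcm(R))$ (identified on homotopy groups with $K_*(k) \to K_*(R)$ after invoking devissage and the regularity of $R$) is induced by a map of spaces, hence commutes with $\Psi$.

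More precisely, fix any class $\alpha \in K_n(k)$. Since $K_n(k)$ is torsion, there is some integer $\ell > 0$ with $\ell \alpha = 0$, so by Lemma \ref{fincoeff}, $\alpha$ lies in the image of $\Psi: K_{n+1}(k;\modl) \to K_n(k)$. Choose $\tilde{\alpha} \in K_{n+1}(k;\modl)$ with $\Psi(\tilde{\alpha}) = \alpha$. By the naturality of $\Psi$ with respect to the exact pushforward functor inducing the transfer, the square
\[
\xymatrix{
K_{n+1}(k;\modl) \ar^{t}[r] \ar_{\Psi}[d] & K_{n+1}(R;\modl) \ar^{\Psi}[d] \\
K_n(k) \ar^{t}[r] & K_n(R)
}
\]
commutes. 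Applying Theorem \ref{fcdvr} in degree $n+1$ gives $t(\tilde{\alpha}) = 0$ in $K_{n+1}(R;\modl)$, and therefore $t(\alpha) = t(\Psi(\tilde{\alpha})) = \Psi(t(\tilde{\alpha})) = 0$. Since $\alpha$ was arbitrary, the transfer map $K_n(k) \to K_n(R)$ vanishes.

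The only nontrivial point to verify carefully is that the transfer map really commutes with $\Psi$; this is automatic because $\Psi = (S^n \to P^{n+1}(\modl))^*$ at the level of mapping spaces, and the transfer is realized by a map of the associated $K$-theory spaces $K(\mcm^{\mathrm{fl}}(k)) \to K(\mcm(R))$, so functoriality of $[-,-]$ in both variables yields the desired square. No further obstacle arises; the argument is essentially a one-line diagram chase once the torsion hypothesis is converted into a lift through $\Psi$.
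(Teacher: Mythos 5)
Your argument is exactly the paper's: fix a torsion class $\alpha$, lift it through $\Psi$ using Lemma \ref{fincoeff}, and chase the commutative square relating the transfer with $\Psi$ to conclude via Theorem \ref{fcdvr}. The only difference is that you spell out in more detail why the square commutes (the transfer is realized by a map of $K$-theory spaces, and $\Psi$ is induced by $S^n \to P^{n+1}(\mathbb{Z}/\ell\mathbb{Z})$), which the paper summarizes in one clause.
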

\begin{proof}Fix $\alpha \in K_n(k)$ and let $\ell \in \mathbb{Z}$ be such that $\ell \cdot \alpha = 0$. Since the transfer map is induced from the inclusion $\mcm^1(R) \to \mcm^0(R)$, it is compatible with the map $\Psi$ defined earlier in this section and so produces a commutative diagram:
\[ \xymatrix{
K_{n+1}(k;\modl) \ar^{\Psi}[d] \ar^0[r] & K_{n+1}(R;\modl) \ar^{\Psi'}[d] \\
K_n(k) \ar[r] & K_n(R)
}\]
By Lemma \ref{fincoeff}, $\alpha$ is contained in the image of $\Psi$, and by Theorem \ref{fcdvr}, the top map is zero. Thus, $\alpha$ maps to zero as claimed.
\end{proof}

\subsection{Torsion in the $K$-theory of Fields}If $(R,\pi R, k)$ is a mixed characteristic DVR essentially of finite-type over $\mathbb{Z}$, then by the Nullstellensatz, $k = R/\pi R$ will be a finitely-generated field over $\mathbb{F}_p$ -- that is, the function field of some (possibly singular) projective variety. In view of Corollary \ref{dvrintegral}, understanding the torsion in $K_n(k)$ is a viable approach to attacking Conjecture \ref{unramifiedconj}.

\begin{lem}\label{parshin_special}Let $X$ be an integral scheme of finite-type over $\mathbb{F}_p$ with generic point $\eta$ and function field $L = k(\eta)$. Suppose that for all $x \neq \eta$ in $X$, $K_n(k(x))$ is torsion for $n > \trdeg(k(x)/\mathbb{F}_p)$. Then there are isomorphisms $K'_n(X)_{\mathbb{Q}} \cong K_n(L)_{\mathbb{Q}}$ for all $n > \dim X$.
\end{lem}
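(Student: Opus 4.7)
The plan is to apply the coniveau spectral sequence of Theorem~\ref{BGQ},
\[ E_1^{pq} = \bigoplus_{x \in X^p}K_{-p-q}(k(x)) \Rightarrow K'_{-p-q}(X), \]
and tensor it with $\mathbb{Q}$; since $X$ is of finite Krull dimension the spectral sequence is bounded, and rationalization (which is exact) preserves convergence. The target is to show that, for any fixed $n > \dim X$, on the diagonal $p + q = -n$ only the $p = 0$ entry $E_{1,\mathbb{Q}}^{0,-n} = K_n(L)_{\mathbb{Q}}$ survives at $E_\infty$, giving the desired isomorphism via the edge map, which is induced by restriction $\mcm(X) \to \mcm(\Spec L)$ and so coincides with the natural localization map.

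The first step is to identify $\trdeg(k(x)/\mathbb{F}_p)$ for $x \in X^p$. Since $X$ is integral and of finite-type over a field, it is universally catenary and equidimensional, so $\dim \overline{\{x\}} = \dim X - p$; and since $\overline{\{x\}}$ is itself an integral $\mathbb{F}_p$-variety with function field $k(x)$, the classical dimension formula gives $\dim \overline{\{x\}} = \trdeg(k(x)/\mathbb{F}_p)$. Hence $\trdeg(k(x)/\mathbb{F}_p) = \dim X - p$ for every non-generic $x \in X^p$.

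Combined with the hypothesis, this means $K_m(k(x))_{\mathbb{Q}} = 0$ whenever $p \geq 1$ and $m > \dim X - p$. Translated into the spectral sequence: $E_{1,\mathbb{Q}}^{pq} = 0$ whenever $p \geq 1$ and $-p - q > \dim X - p$, i.e.\ whenever $q < -\dim X$. For any $n > \dim X$ and any $p \geq 1$, the diagonal entry has $q = -n - p < -\dim X$, so $E_{1,\mathbb{Q}}^{p,-n-p} = 0$. Moreover, every outgoing differential $d_r: E_r^{0,-n} \to E_r^{r,-n-r+1}$ lands in a rational subquotient of $\bigoplus_{x \in X^r}K_{n-1}(k(x))_{\mathbb{Q}}$, which vanishes by the same estimate (the condition $n - 1 > \dim X - r$ holds since $n > \dim X$ and $r \geq 1$); incoming differentials vanish because the spectral sequence is concentrated in $p \geq 0$. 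Therefore $E_{\infty,\mathbb{Q}}^{0,-n} = K_n(L)_{\mathbb{Q}}$.

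Finally, by convergence the coniveau filtration on $K'_n(X)_{\mathbb{Q}}$ has $F^p / F^{p+1} = E_{\infty,\mathbb{Q}}^{p,-n-p} = 0$ for all $p \geq 1$, so $F^1 K'_n(X)_{\mathbb{Q}} = 0$ and $K'_n(X)_{\mathbb{Q}} = F^0/F^1 = E_{\infty,\mathbb{Q}}^{0,-n} = K_n(L)_{\mathbb{Q}}$, with the isomorphism induced by the edge map as described. No step looks genuinely hard: the hypothesis is tailored exactly to kill off the rationalized $E_1$ page in the relevant range, and the only bookkeeping to watch is the book-keeping of codimensions versus transcendence degrees in the first step.
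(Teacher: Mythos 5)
Your proof is correct and follows essentially the same route as the paper's: rationalize the coniveau spectral sequence, use the dimension formula for finite-type integral $\mathbb{F}_p$-schemes to identify $\trdeg(k(x)/\mathbb{F}_p) = \dim X - p$ for $x \in X^p$ non-generic, and conclude from the hypothesis that $(E_1^{pq})_{\mathbb{Q}}$ vanishes for $p \geq 1$ and $q < -\dim X$. Your added checks --- that outgoing differentials from $E_r^{0,-n}$ also land in the vanishing range, and that the edge map is the localization map --- are details the paper leaves implicit but do not constitute a different approach.
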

\begin{proof}By the coniveau spectral sequence (Theorem \ref{BGQ}), we have
\[ E_1^{pg} = \bigoplus_{\mathclap{x \in X^p}}K_{-p-q}(k(x)) \Rightarrow K'_{-p-q}(X). \]
Since $X$ is integral, $K_q(L) = E_1^{0,-q}$. It therefore suffices to show that if we apply $- \otimes_{\mathbb{Z}} \mathbb{Q}$ to the spectral sequence, then $(E_1^{pq})_{\mathbb{Q}} = 0$ for $q < -\dim X$ and $p > 0$. It follows from \cite[15.6]{Matsumura} that if $y \in X^p$ then $y$ is the generic point of a closed subvariety having dimension $\dim X - p$; whence $\trdeg(k(y)/\mathbb{F}_p) = \dim X - p$. By hypothesis, it follows that when $p>0$, $(E_1^{pq})_{\mathbb{Q}} = 0$ whenever $-p-q > \dim X - p$; that is, when $q < - \dim X$.
\end{proof}

When the conditions of Lemma \ref{parshin_special} are met, we see that save for a few degrees, the rational $K$-theory of a function field over $\mathbb{F}_p$ coincides with that of any variety that models it. A particularly relevant conjecture is the following:

\begin{conj}[Beilinson-Parshin Conjecture] \cite[51]{Kahn} Let $X$ be a smooth, projective variety over $\mathbb{F}_p$. Then $K_n(X)$ is torsion for all $n > 0$.
\end{conj}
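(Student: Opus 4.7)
The statement here is the Beilinson-Parshin Conjecture, a central open problem of arithmetic algebraic geometry, so what follows is less a concrete proof plan than a sketch of the standard program. The conjecture is essentially equivalent to Parshin's rational vanishing conjecture in motivic cohomology: for $X$ smooth and projective over $\mathbb{F}_p$, $H^q_M(X,\mathbb{Q}(r)) = 0$ whenever $q \neq 2r$. Indeed, for any regular scheme the rational Chern character / Adams eigenspace decomposition furnishes an isomorphism $K_n(X)_{\mathbb{Q}} \cong \bigoplus_{r \geq 0} H^{2r-n}_M(X,\mathbb{Q}(r))$, and granting the motivic vanishing above forces every summand with $n > 0$ to vanish, whence $K_n(X)$ is torsion.

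The plan would therefore proceed in two stages. First, one passes from $K$-theory to motivic cohomology via the Chern character, reducing the problem to showing that $H^q_M(X,\mathbb{Q}(r)) = 0$ for $q < 2r$ (the range $q > 2r$ falls within Beilinson-Soul\'e--type vanishing and dimension bounds, which are at least partially understood). Second, one attacks the remaining ``below-the-diagonal'' vanishing using the structure available over finite fields: the action of Frobenius, Deligne's weight bounds on $\ell$-adic \'etale cohomology, and the Tate conjecture. Concretely, the cycle class maps $\ch^r(X) \otimes \mathbb{Q}_\ell \to H^{2r}_{\mathrm{et}}(\overline{X},\mathbb{Q}_\ell(r))^{\mathrm{Frob}}$ are conjecturally isomorphisms, and Beilinson's conjectural filtration on Chow groups together with purity should then translate Tate into the desired motivic vanishing.

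The main obstacle, of course, is that essentially every ingredient of the second stage is itself an open conjecture (Tate in arbitrary codimension, Beilinson-Soul\'e, Bass finiteness, and so on). The Beilinson-Parshin statement is fully known only in very restricted cases: for curves it follows from Quillen's computation of $K_*(\mathbb{F}_q)$ combined with Harder's theorem, and for surfaces there are partial results via the higher Chow group calculations of Bloch and Geisser-Levine. An honest general proof would require a qualitative advance on motives over finite fields, so the plan above is best regarded as a roadmap rather than an executable strategy. In the present paper, the conjecture is used in reverse: it is invoked as a source of \emph{torsion} in $K_n$ of function fields, which in conjunction with Corollary \ref{dvrintegral} could feed back into Conjecture \ref{gdvr} and hence into Gersten's Conjecture for unramified mixed characteristic regular local rings.
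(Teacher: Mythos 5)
You are right that this is a statement of an open conjecture, not a theorem, and the paper offers no proof of it — nor could it, since the Beilinson–Parshin Conjecture remains one of the central unresolved problems in the arithmetic of motives. The paper simply records the conjecture (with the citation to Kahn's survey) and then invokes it \emph{conditionally}: via Geisser's result \cite[1.3]{Geisser}, the conjecture would imply $K_n(L)_{\mathbb{Q}}=0$ for $\trdeg(L/\mathbb{F}_p)<n$, which in turn would feed into Corollary \ref{dvrintegral} and Conjecture \ref{gdvr}. The only unconditional input the paper actually uses from this circle of ideas is Harder's theorem, which settles the conjecture for curves and underlies Corollary \ref{torsion}. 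Your sketch of the standard program — Chern character/Adams decomposition reducing to motivic vanishing, then Frobenius weights, Tate, and the Beilinson filtration — is an accurate description of why the conjecture is believed, and your closing paragraph correctly identifies both the known cases (Quillen for finite fields, Harder for curves) and the role the conjecture plays in this paper. There is nothing here to compare against, since no proof exists in the paper or in the literature; treating the item as a roadmap rather than a proof was the right call.
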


Using an argument similar to that of Lemma \ref{parshin_special}, Geisser has shown \cite[1.3]{Geisser} that the Beilinson-Parshin Conjecture would imply that $K_n(L)_{\mathbb{Q}} = 0$ whenever $\trdeg(L/\mathbb{F}_p) < n$. It is, in fact, impossible to do any better as the following Proposition demonstrates:

\begin{prop}For all fields $L$ of characteristic $p > 0$, if $\trdeg(L/\mathbb{F}_p) \geq n$, then $K_i(L)_{\mathbb{Q}} \neq 0$ for all $i \leq n$.
\end{prop}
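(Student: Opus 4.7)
The plan is to produce an explicit non-torsion element of $K_i(L)\otimes\mathbb{Q}$ for each $0\le i\le n$, by constructing non-torsion Milnor symbols and transporting them to Quillen $K$-theory via Suslin's comparison theorem. The case $i=0$ is immediate since $K_0(L)\otimes\mathbb{Q}=\mathbb{Q}$, so fix $1\le i\le n$.

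First, I would pick $t_1,\ldots,t_i\in L$ algebraically independent over $\mathbb{F}_p$ (possible by the transcendence hypothesis) and form the Milnor symbol $\sigma=\{t_1,\ldots,t_i\}\in K^M_i(L)$. The claim is that $\sigma$ is non-torsion, which I would prove by induction on $i$. The base case $i=1$ is easy: $t_1$ is transcendental over $\mathbb{F}_p$, hence not a root of unity, hence non-torsion in $L^{\times}=K^M_1(L)$.

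For the inductive step, since $K^M_*$ commutes with filtered colimits of fields I may replace $L$ by a finitely generated subfield containing $t_1,\ldots,t_i$. I would then extend $t_1,\ldots,t_i$ to a transcendence basis $t_1,\ldots,t_i,u_{i+1},\ldots,u_d$ of $L/\mathbb{F}_p$, set $F=\mathbb{F}_p(t_1,\ldots,t_i,u_{i+1},\ldots,u_d)$, and consider on $F$ the DVR $R=\mathbb{F}_p[t_1,\ldots,t_i,u_{i+1},\ldots,u_d]_{(t_i)}$ with uniformizer $t_i$ and residue field $\mathbb{F}_p(t_1,\ldots,t_{i-1},u_{i+1},\ldots,u_d)$. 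Since $L/F$ is finite, the integral closure of $R$ in $L$ is a semilocal Dedekind domain; any maximal ideal of it yields a discrete valuation $v$ on $L$ with $v(t_i)>0$, $v(t_j)=0$ for $j<i$, and whose residue field $k(v)$ is a finite extension of the residue field of $R$. In particular, the residues $\bar t_1,\ldots,\bar t_{i-1}\in k(v)$ remain algebraically independent over $\mathbb{F}_p$. The Bass-Tate tame symbol then gives
\[ \partial_v(\sigma)=(-1)^{i-1}\,v(t_i)\cdot\{\bar t_1,\ldots,\bar t_{i-1}\}\in K^M_{i-1}(k(v)), \]
which is non-torsion by the induction hypothesis applied to $k(v)$ and the tuple $\bar t_1,\ldots,\bar t_{i-1}$. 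Hence $\sigma$ is non-torsion in $K^M_i(L)$.

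To descend to Quillen $K$-theory, I would invoke Suslin's theorem comparing Milnor and Quillen $K$-theory of fields: the natural map $K^M_i(L)\otimes\mathbb{Q}\hookrightarrow K_i(L)\otimes\mathbb{Q}$ is injective, identifying the source with the top weight summand of the Adams decomposition of the target. The image of $\sigma$ is then a nonzero class in $K_i(L)\otimes\mathbb{Q}$, as desired.

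The main obstacle is the inductive construction of the valuation $v$ in Step 1 and verifying that the tame symbol behaves as claimed on $\sigma$; both reduce to the standard Milnor-symbol calculation plus the routine semilocal-Dedekind argument above. The appeal to Suslin's comparison theorem at the end is deep but entirely standard black-box input.
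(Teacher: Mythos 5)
Your proposal is correct, but it takes a genuinely different route from the paper. You prove nontriviality of $K_i(L)_{\mathbb{Q}}$ by building an explicit nonzero class: a Milnor symbol $\{t_1,\dots,t_i\}$ in algebraically independent elements, shown to be non-torsion by a tame-symbol induction, and then transported to Quillen $K$-theory via the deep comparison theorem (Suslin/Nesterenko--Suslin/Totaro) identifying $K^M_i(L)_{\mathbb{Q}}$ with the top-weight Adams eigenspace of $K_i(L)_{\mathbb{Q}}$. The paper's proof stays entirely inside Quillen $K$-theory and is, in that sense, more self-contained: it also inducts on transcendence degree, but the step is carried out by showing that for a rational function field $F = \operatorname{Frac}(E[X])$ the localization sequences for the rings $A_f = E[X]_f$ break into short exact sequences (because the relevant transfer maps vanish, via a self-map of functors argument closely mirroring Proposition~\ref{main_thm_weak}), which forces a non-canonical injection $K_{i-1}(E)_{\mathbb{Q}} \hookrightarrow K_i(F)_{\mathbb{Q}}$; the reduction from $L$ to $F$ uses the usual transfer/rationalization trick for finite (and then algebraic) extensions. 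Your argument is shorter and more conceptual because the Adams/Milnor comparison does the heavy lifting as a black box, and it has the additional virtue of exhibiting a concrete non-torsion class; the paper's argument avoids that machinery and reuses the exact same transfer-vanishing technology already developed for the main theorem, which is presumably why the author preferred it. Both approaches verify the same inductive hypothesis (that the residue field still has large transcendence degree and still contains algebraically independent elements after passing to a well-chosen discrete valuation), so the underlying geometry is the same even though the $K$-theoretic machinery invoked is different.
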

\begin{proof}We argue by induction on $n$ with the case of $n = 0$ being immediate. Suppose $n > 0$ and that $\trdeg(L/\mathbb{F}_p) \geq n$. Fix a transcendence basis $\left\{x_{\lambda}\right\}_{\lambda \in \Lambda}$ for $L/\mathbb{F}_p$ and choose some $x_{\lambda'}$ in this set. Let $E$ be a purely transcendental extension of $\mathbb{F}_p$ on the set $\left\{x_{\lambda}\right\}$ for $\lambda \in \Lambda - \left\{\lambda'\right\}$. Thus, $\trdeg(E/\mathbb{F}_p) \geq n-1$.  If we define $A := E[X] \to L$ via $X \mapsto x_{\lambda'}$, then $L$ is algebraic over the fraction field of $F$ of $A$.

Observe that it suffices to prove that $K_i(F)_{\mathbb{Q}} \neq 0$ for $i \leq n$. Indeed, for any finite extension $F'$ of $F$, the composite $K_i(F) \to K_i(F') \to K_i(F)$ defined by base-extension and transfer is simply multiplication by $d = [F':F]$. Thus, $K_i(F)_{\mathbb{Q}} \to K_i(F')_{\mathbb{Q}}$ is injective. It follows then that $K_i(F)_{\mathbb{Q}} \to K_i(L)_{\mathbb{Q}}$ must also injective as $L/F$ is algebraic and hence a direct limit of finite extensions.

Since $A = E[X]$ is a UFD, we can realize its fraction field $F$ as a direct limit of rings $A_f$ as $f$ varies over square-free elements of $A$. (Note that this system is directed: if $f$ and $g$ are square-free then both $A_f$ and $A_g$ map to $A_h = A_{fg}$ where $h = \operatorname{lcm}(f,g)$.) We now make the following intermediate claim: \vspace{2mm}

\noindent\textbf{Subclaim: }\emph{If $f,g \in A$ are square-free (or units), then for all $m \geq 0$, the transfer map $K_m'(A_f/gA_f) \to K_m'(A_f)$ is zero.} \vspace{2mm} \newline
To prove this claim, we mimic the proof of Proposition \ref{main_thm_weak}. Set $D = (A_f/gA_f) \otimes_E A_f$. As $g$ is square-free, $A/gA$ is just a product of fields, each a finite-extension of $E$; so too is its localization $A_f/g A_f$. Thus, $E \to A_f/g A_f$ is finite, thereby making $A_f \to D$ finite. Furthermore, $A_f/g A_f \to D$ is smooth because $E \to A_f$ is.

We have the retraction $r:D \to A_f/g A_f$ given by $a \otimes b \mapsto ab$ which makes $A_f/g A_f \to D \stackrel{r}{\longrightarrow} A_f/g A_f$ the identity. By Lemma \ref{section}, $I = \ker r$ is a locally-free $D$-module of rank $1$. We now show that $I \cong D$. As mentioned above, $A_f/g A_f \cong \prod_{j=1}^{k}E_j'$ where each $E'_j$ is a finite field extension of $E$. $D = A_f/g A_f \otimes_E A_f$ is just a localization of $A_f/g A_f \otimes_E E[X] \cong \prod_{j=1}^{k}E_j'[X]$ and is therefore a principal ideal ring. Thus, $I$ must be principal (and consequently isomorphic to $D$ as $I$ is projective). We therefore get a short exact sequence:
\[ 0 \to D \to D \stackrel{r}{\longrightarrow} A_f/g A_f \to 0. \]
The functor $\mathbf{F}: \mcm(A_f/g A_f) \to \mcm(D)$ given by $M \mapsto M \otimes_{A_f/g A_f} D$ is exact as are the forgetful functors $\mathbf{u}: \mcm(D) \to \mcm(A_f)$ and $\mathbf{i}: \mcm(A_f/g A_f) \to \mcm(A_f)$. From the exact sequence, we obtain an exact sequence of functors $\mcm(A_f/g A_f) \to \mcm(A_f)$:
\[0 \to \mathbf{u}\mathbf{F} \to \mathbf{u}\mathbf{F} \to \mathbf{i} \to 0 \]
which proves that $\mathbf{i}_*:K_m'(A_f/g A_f) \to K_m'(A_f)$ is zero. This completes the proof of the subclaim.

We have just shown that if $f,g \in A$ are square-free (or units), then the localization long exact sequence for $A_f/g A_f$, $A_f$ and $A_{fg}$ breaks up into short exact sequences:
\[0 \to K_i'(A_f) \to K'_i(A_{fg}) \to K'_{i-1}(A_f/g A_f) \to 0\]
for all $i \geq 1$. We see at once that $K_i'(A_h)$ injects into $K_i(F)$ for any square-free $h$. On the other hand, if we take $f=1$, $g = X$, and tensor with $\mathbb{Q}$, we see that $K_{i-1}(E)_{\mathbb{Q}} = K'_{i-1}(A/gA)_{\mathbb{Q}}$ non-canonically injects into $K'_i(A_g)_{\mathbb{Q}} \hookrightarrow K_i(F)_{\mathbb{Q}}$. If $i \leq n$, then by induction, $K_{i-1}(E)_{\mathbb{Q}} \neq 0$ since $\trdeg(E/\mathbb{F}_p) \geq n-1$ by construction.
\end{proof}

While the Beilinson-Parshin Conjecture remains open in general, Harder \cite{Harder} has proved it for curves; and from this result we can extract the following:

\begin{cor}\label{torsion}Let $E$ be a finitely-generated function field over $\mathbb{F}_p$ such that $\trdeg(E/\mathbb{F}_p) = 1$. Let $L$ be a purely transcendental extension of $E$ of degree one. Then:
\begin{enumerate}
\item[(i)] $K_n(E)_{\mathbb{Q}} = 0$ for $n > 1$.
\item[(ii)] $K_n(L)_{\mathbb{Q}} = 0$ for $n > 2$.
\end{enumerate}
\end{cor}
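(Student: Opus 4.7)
The plan is to combine Lemma \ref{parshin_special} with Harder's proof of the Beilinson--Parshin Conjecture for curves and the projective bundle formula for $\mathbb{P}^1$-bundles.

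For part (i), let $X$ be the unique smooth projective curve over $\mathbb{F}_p$ with function field $E$ (obtained by normalizing a projective model). Every point $x \neq \eta$ of $X$ is closed, so $k(x)$ is a finite field; Quillen's computation shows $K_n(k(x))$ is finite, hence torsion, for all $n > 0 = \trdeg(k(x)/\mathbb{F}_p)$. The hypothesis of Lemma \ref{parshin_special} is thus satisfied and yields an isomorphism $K'_n(X)_{\mathbb{Q}} \cong K_n(E)_{\mathbb{Q}}$ for $n > \dim X = 1$. By Harder's theorem, $K_n(X)$ is torsion for $n > 0$, and since $X$ is regular, $K_n(X) = K'_n(X)$. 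Hence $K_n(E)_{\mathbb{Q}} = 0$ for $n > 1$.

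For part (ii), set $Y = \mathbb{P}^1_X$, a smooth projective surface over $\mathbb{F}_p$ with function field $L = E(t)$. The points of $Y$ fall into three classes: the generic point $\eta_Y$ with residue field $L$; codimension-$1$ points, whose residue fields are either finite extensions of $E$ (horizontal divisors, which surject onto $X$) or of the form $k(x)(t)$ for some closed point $x \in X$ (the generic points of the vertical fibres $\mathbb{P}^1_{k(x)}$); and codimension-$2$ closed points, whose residue fields are finite. Every codimension-$1$ residue field is a finitely-generated function field of transcendence degree $1$ over $\mathbb{F}_p$, so part (i) applies and shows its $K$-theory is rationally trivial in degrees $n > 1 = \trdeg$; the finite residue fields at codimension-$2$ points have torsion $K$-groups in degrees $n > 0$. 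The hypothesis of Lemma \ref{parshin_special} therefore holds for $Y$, giving $K'_n(Y)_{\mathbb{Q}} \cong K_n(L)_{\mathbb{Q}}$ for $n > 2$. The projective bundle formula provides $K_n(Y) \cong K_n(X)^{\oplus 2}$, which is torsion for $n > 0$ by Harder's theorem; regularity of $Y$ identifies this with $K'_n(Y)$, and we conclude $K_n(L)_{\mathbb{Q}} = 0$ for $n > 2$.

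There is no genuine obstacle here, only bookkeeping. The one point worth flagging is that, in verifying the hypothesis of Lemma \ref{parshin_special} for $Y$, the vertical-fibre residue fields $k(x)(t)$ are not finite extensions of $E$; one must instead invoke part (i) applied to the auxiliary curve $\mathbb{P}^1_{k(x)}$ over $\mathbb{F}_p$, whose function field is $k(x)(t)$. Once this is noted, everything reduces to Lemma \ref{parshin_special} and Harder's theorem.
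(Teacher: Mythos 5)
Your proof is correct and follows essentially the same route as the paper: a smooth projective curve model for $E$ combined with Lemma \ref{parshin_special} and Harder's theorem for part (i), then the projective bundle formula for $\mathbb{P}^1_X$ together with part (i) (applied to the transcendence-degree-one residue fields at codimension-one points) and Lemma \ref{parshin_special} for part (ii). The caveat you flag about the vertical-fibre residue fields $k(x)(t)$ is a reasonable thing to spell out, though since part (i) is stated for an arbitrary finitely generated function field of transcendence degree one over $\mathbb{F}_p$, it applies to those fields without further ado.
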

\begin{proof}For (i), let $X$ be a smooth, projective curve over $\mathbb{F}_p$ with function field $E$. For any $x \in X$ which is not the generic point, $k(x)$ is just a finite field. As $K_n(\mathbb{F}_q)$ is finite for $n > 1$ \cite[Thm. 8]{Quillen2}, the conditions of Lemma \ref{parshin_special} are satisfied, thereby giving isomorphisms $K_n(X)_{\mathbb{Q}} \cong K_n(E)_{\mathbb{Q}}$ for all $n > 1$. On the other hand, Harder's result \cite{Harder} shows that $K_n(X)_{\mathbb{Q}} = 0$ for $n \geq 1$.

For (ii), we note that $\mathbb{P}^1 \times X$ is a smooth, projective model for $L$ when $X$ is the curve from part (i). If $x \in \mathbb{P}^1 \times X$ is not the generic point of $\mathbb{P}^1 \times X$, it is either closed or the generic point for a curve. By part (i), we therefore have $K_n(k(x))_{\mathbb{Q}} = 0$ for $n > \trdeg(k(x)/\mathbb{F}_p)$. Once again, Lemma \ref{parshin_special} applies, so $K_n(L)_{\mathbb{Q}} \cong K_n(\mathbb{P}^1 \times X)_{\mathbb{Q}}$ for $n > 2$. On the other hand, \cite[VIII.2.1]{Quillen} tells us that $K_n(\mathbb{P}^1 \times X) \cong K_n(X)^{\oplus 2}$, and the latter group is torsion for $n > 0$.
\end{proof}

\begin{prop}Suppose $(R,\pi R, k)$ is a DVR whose residue field $k$ is as in Corollary \ref{torsion}. Then $R$ satisfies Gersten's Conjecture.
\end{prop}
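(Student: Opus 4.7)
The approach is to combine the torsion-in-high-degree consequence of Corollary \ref{torsion} with the transfer-vanishing-on-torsion statement of Corollary \ref{dvrintegral}, dispatching the remaining small degrees by direct computation on a projective resolution of $k$. By Quillen's localization theorem (or equivalently by Proposition \ref{gl_improved}), Gersten's Conjecture for the DVR $R$ is equivalent to the vanishing, for every $n \geq 0$, of the transfer map $t_n \colon K_n(k) \to K_n(R)$ induced by the devissage identification and the inclusion $\mcm^1(R) \hookrightarrow \mcm^0(R)$; this is the reformulation already used in Conjecture \ref{gdvr}.

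The steps are then as follows. First, Corollary \ref{torsion} gives that $K_n(k)$ is a torsion abelian group for all $n \geq 2$ when $k$ is an $E$, and for all $n \geq 3$ when $k$ is an $L$. Combined with Corollary \ref{dvrintegral}, this immediately yields $t_n = 0$ in the relevant range. Second, for the remaining low degrees I would argue directly with the two-term free resolution $0 \to R \xrightarrow{\pi} R \to R/\pi R \to 0$. In degree $n=0$, the transfer sends the generator $1 \in K_0(k) = \mathbb{Z}$ to $[R/\pi R]$, which vanishes in $K_0(R)$ by that resolution. In degree $n=1$, a unit $\alpha \in k^\times$ lifts to $\tilde\alpha \in R^\times$ since $R$ is local, and the pair $(\tilde\alpha, \tilde\alpha)$ defines a compatible automorphism of the resolution whose class in $K_1(R)$ is $[R,\tilde\alpha] - [R,\tilde\alpha] = 0$.

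The main obstacle I anticipate is the case $n=2$ when $k = L$, for which Corollary \ref{torsion}(ii) yields torsion only in degrees $n \geq 3$: $K_2(L)$ itself is typically non-torsion, as the tame-residue maps to $K_1$ of transcendence-degree-one function fields contribute a large torsion-free quotient, so Corollary \ref{dvrintegral} does not apply on the nose. A natural route to close this gap is to exploit the Milnor-type exact sequence $0 \to K_2(E) \to K_2(L) \to \bigoplus_v \kappa(v)^\times \to 0$, where $v$ runs over the places of $L/E$. The $K_2(E)$ summand is torsion by Corollary \ref{torsion}(i) and is killed by $t_2$ via Corollary \ref{dvrintegral}; on each residue summand one can attempt an explicit symbol-level computation, showing that a tame-symbol preimage $\{u, s\} \in K_2(L)$ transfers to zero in $K_2(R)$ by lifting along the same projective resolution used in degree one.
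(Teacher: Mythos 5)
Your overall framework is correct and matches the paper: reduce Gersten's Conjecture for the DVR to the vanishing of the transfer $t_n \colon K_n(k) \to K_n(R)$ for all $n$; dispatch $n \geq 3$ (and $n=2$ when $k=E$) by combining the rational vanishing of Corollary \ref{torsion} with Corollary \ref{dvrintegral}; and handle $n=0,1$ directly. Your $n=0,1$ computations via the resolution $0 \to R \xrightarrow{\pi} R \to k \to 0$ are the right idea (the paper simply cites the localization sequence, but these are two faces of the same elementary argument).

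The genuine gap is in the case $n=2$ when $k=L$, and your proposed repair is both incomplete and more involved than what is needed. The Milnor exact sequence $0 \to K_2(E) \to K_2(L) \to \bigoplus_v \kappa(v)^\times \to 0$ splits $K_2(L)$ into pieces, but your "symbol-level computation" on the residue summands is never made precise, and a genuine verification that tame-symbol preimages transfer to zero requires exactly the input you have not stated. The paper's argument is shorter and works uniformly for any residue field: by Matsumoto's Theorem, every element of $K_2(k)$ is a finite sum of Steinberg symbols $\{a,b\}$ with $a,b \in k^\times$; lifting $b$ to $\tilde b \in R^\times$ (so that $b = \operatorname{res}(\tilde b)$ for the restriction $K_1(R) \to K_1(k)$), the projection formula gives
\[
t_2(\{a,b\}) = t_2\bigl(a \cdot \operatorname{res}(\tilde b)\bigr) = t_1(a) \cdot \tilde b = 0
\]
since $t_1 = 0$ was already established in degree one. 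No decomposition of $L$ over $E$ is needed, and no tame-symbol bookkeeping arises. If you do want to pursue your route, you would in any case have to invoke this projection-formula step on the residue summands, so you might as well apply it to all of $K_2(L)$ at once via decomposability.
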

\begin{proof}We need to show that the transfer map $t:K_n(k) \to K_n(R)$ is zero for all $n \geq 0$. For $n \geq 3$, the result follows from Lemmas \ref{dvrintegral} and \ref{torsion}. For $n = 0,1$, the argument is elementary and follows from the localization long exact sequence; for $n=2$, it follows from the fact that every element of $K_2(k)$ is decomposable (Matsumoto's Theorem) --- see \cite[A.2]{Bloch} or \cite[V.6.6.2]{Kbook}.
\end{proof}

\section{Bloch's Formula and the Claborn-Fossum Conjecture}\label{chow}When $X$ is an equicharacteristic regular scheme, Lemmas \ref{panin_sheaf} and \ref{gersten_chow} allow the Chow group on codimension-$p$ cycles to be realized via what is known as Bloch's Formula:
\[ H^p_{\Zar}(X,\mck_p) \cong \ch^p(X). \]
This isomorphism is heavily dependent on the fact that every local ring $\mco_{X,x}$ of $X$ satisfies Gersten's Conjecture. In our mixed characteristic setting, we only have partial acyclicity of the Gersten complex; and in what follows, we shall develop analogues of Bloch's Formula for these more general contexts. See Section \ref{codimension} for a precise definition of Chow group at this level of generality and remarks regarding catenary concerns.

When $X$ is geometrically regular over a one-dimensional base, we have the following identification which may be regarded as a generalization of \cite[Cor. 8]{GilletLevine}:

\begin{cor}\label{chow_zar}Let $Y$ be an integral, regular Noetherian scheme of dimension $1$ with generic point $\eta$. Suppose $X \to Y$ is geometrically regular. Then for all $p \geq 1$, there are isomorphisms:
\[ \ch^p(X) \cong \mathbb{H}^p_{\Zar}\left(X, \left( 0 \to (u_{\eta})_*(\mck_{p,X_{\eta}}) \stackrel{\partial}{\longrightarrow} \bigoplus_{y \in Y^1}(u_y)_*(\mck_{p-1, X_y}) \to 0 \right)\right). \]
(Here, $u_t$ is the natural inclusion of the fibre $X_t \to X$.)
\end{cor}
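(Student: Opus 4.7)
The plan is to string together three results that are already in hand, so this is purely a matter of composing isomorphisms. First I would invoke Corollary \ref{unramified_bloch} with $n = p \geq 1$, which provides a quasi-isomorphism
\[ \left( 0 \to (u_{\eta})_*(\mck_{p,X_{\eta}}) \stackrel{\partial}{\longrightarrow} \bigoplus_{y \in Y^1}(u_y)_*(\mck_{p-1, X_y}) \to 0 \right) \stackrel{\sim}{\longrightarrow} \dumcg_{p,X}^{\cdot} \]
of complexes on $X_{\Zar}$. Because Zariski hypercohomology is a derived functor and hence descends to the derived category, this quasi-isomorphism induces an isomorphism on $\mathbb{H}^p_{\Zar}(X,-)$, identifying the right-hand side of the statement with $\mathbb{H}^p_{\Zar}(X,\dumcg_{p,X}^{\cdot})$.

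Next I would apply Lemma \ref{hypercohomology}(i), which identifies $\mathbb{H}^p_{\Zar}(X,\dumcg_{p,X}^{\cdot})$ with the ordinary cohomology $H^p(\mcg_p^{\cdot}(X))$ of the Gersten complex of global sections. Finally, Lemma \ref{gersten_chow} identifies this cohomology group with the Chow group $\ch^p(X)$. Composing these three isomorphisms yields the desired formula.

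There is no genuine obstacle: the work has already been done in proving Corollary \ref{unramified_bloch}, and the remaining two identifications are standard facts about the Gersten complex quoted from Section \ref{coniveau} and Section \ref{nisnevich}. The only thing one should be mindful of is that the quasi-isomorphism of Corollary \ref{unramified_bloch} is stated in the Zariski topology, so matching it with the Zariski hypercohomology appearing in the statement (rather than the Nisnevich version) is automatic, and the hypothesis $p \geq 1$ matches the hypothesis $n \geq 1$ already present in Corollary \ref{unramified_bloch}.
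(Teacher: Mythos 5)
Your proof is correct and follows essentially the same route as the paper: invoke the quasi-isomorphism from Corollary \ref{unramified_bloch}, note that it induces an isomorphism on Zariski hypercohomology, and then chain together Lemma \ref{hypercohomology}(i) and Lemma \ref{gersten_chow}. The paper phrases the invariance step via the hypercohomology spectral sequence rather than a derived-category remark, but the content is identical.
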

\begin{proof}For any bounded complex $\mathcal{C}^{\cdot}$ of Zariski sheaves on $X$, there is a hypercohomology spectral sequence
\[ E_2^{pq} = H^p_{\Zar}(X,H^q(\mathcal{C}^{\cdot})) \Rightarrow \mathbb{H}^{p+q}_{\Zar}(X,\mathcal{C}^{\cdot}) \]
The quasi-isomorphism $\left( 0 \to (u_{\eta})_*(\mck_{p,X_{\eta}}) \stackrel{\partial}{\longrightarrow} \bigoplus_{y \in Y^1}(u_y)_*(\mck_{p-1, X_y}) \to 0 \right) \stackrel{\sim}{\longrightarrow} \dumcg_{p,X}$ from Lemma \ref{unramified_bloch} induces the desired isomorphism of hypercohomology; and by Lemmas \ref{hypercohomology} and \ref{gersten_chow}, we have
\[\mathbb{H}^p_{\Zar}\left(\dumcg_{p,X}\right) \cong H^p(\mcg_p^{\cdot}(X)) \cong \ch^p(X). \]
\end{proof}

Since $K$-theory has an inherent product structure, one would hope that such an isomorphism would endow $\ch^*(X)$ with an integral, multiplicative structure --- see \cite{Grayson} for the equicharacteristic case. Note that since $X \to Y$ is not necessarily of finite-type, the usual geometric recipe of reduction to the diagonal technique \cite[8.1]{Fulton} does not apply as $X \times_Y X$ may not even be Noetherian!

When $X$ is geometrically regular over a base of dimension two or higher, we are forced to use the Nisnevich topology since our underlying Theorem \ref{mainthm} only works for Henselian rings.

\begin{cor}\label{chow_nis}Let $X \to Y$ be a geometrically regular morphism of Noetherian schemes, and let $d = \dim Y < \infty$. Then for each $p \geq 0$, there are isomorphisms
\[ \ch^p(X) = \mathbb{H}_{\Nis}^{p}\left( X, \tau^{\leq d}\left(\dumcg_{p,X}\right)\right). \]
\end{cor}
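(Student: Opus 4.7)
The plan is to assemble Corollary \ref{chow_nis} as a short chain of identifications using results already established in the paper, with no new geometric input required. The three ingredients are the identification of Chow groups with Gersten cohomology, the comparison of Gersten cohomology with Nisnevich hypercohomology of the sheafified Gersten complex, and the acyclicity result that lets us replace the full Gersten complex by its truncation.

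First, I would invoke Lemma \ref{gersten_chow} to obtain $\ch^p(X) = H^p(\mcg_p^{\cdot}(X))$. Next, Lemma \ref{hypercohomology}(ii) rewrites this as
\[
H^p(\mcg_p^{\cdot}(X)) \cong \mathbb{H}^p_{\Nis}\bigl(X, \dumcg_{p,X}^{\cdot}\bigr),
\]
since the terms of $\dumcg_{p,X}^{\cdot}$ are $\Gamma_{\Nis}$-acyclic. Finally, Corollary \ref{nisnevich_acyclicity} asserts that the inclusion $\tau^{\leq d}\bigl(\dumcg_{p,X}^{\cdot}\bigr) \hookrightarrow \dumcg_{p,X}^{\cdot}$ is a quasi-isomorphism of complexes of Nisnevich sheaves on $X$ (this is where the hypotheses that $X \to Y$ is geometrically regular and $\dim Y = d$ are used, via Theorem \ref{mainthm} applied to the Henselian stalks $\mco_{X,x}^h$). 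A quasi-isomorphism of bounded-below complexes of sheaves induces an isomorphism on hypercohomology in every degree, so we obtain
\[
\mathbb{H}^p_{\Nis}\bigl(X, \tau^{\leq d}(\dumcg_{p,X}^{\cdot})\bigr) \stackrel{\sim}{\longrightarrow} \mathbb{H}^p_{\Nis}\bigl(X, \dumcg_{p,X}^{\cdot}\bigr).
\]
Concatenating the three isomorphisms gives the desired identification.

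There is no substantial obstacle; the entire content lies in the preceding acyclicity result (Corollary \ref{nisnevich_acyclicity}), which in turn rests on the main theorem. The only minor point worth noting in the write-up is that the hypercohomology invariance under quasi-isomorphism is standard (for instance via the hypercohomology spectral sequence $E_2^{pq} = H^p_{\Nis}(X, H^q(\mathcal{C}^{\cdot}))$, which depends only on the cohomology sheaves of $\mathcal{C}^{\cdot}$), so a one-line citation suffices. Note also that for $p < d$ the truncation $\tau^{\leq d}$ does not alter anything in the range relevant for computing $\mathbb{H}^p$, and the corollary is essentially automatic; the content is really in the case $p \geq d$, where the truncation genuinely modifies the complex and acyclicity in high degrees is precisely what guarantees the quasi-isomorphism.
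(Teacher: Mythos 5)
Your proposal matches the paper's proof exactly: it invokes Lemma \ref{gersten_chow}, Lemma \ref{hypercohomology}(ii), and the quasi-isomorphism of Corollary \ref{nisnevich_acyclicity} in the same order, just with the hypercohomology-spectral-sequence justification spelled out a little more fully.
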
 
\begin{proof}The proof is entirely analogous to Corollary \ref{chow_zar}: From Corollary \ref{nisnevich_acyclicity}, 
\[\tau^{\leq d}\left(\dumcg_{n,X}\right) \to \dumcg_{n,X}\]
 is a quasi-isomorphism on $X_{\Nis}$, so the two complexes have the same Nisnevich hypercohomology. The statement now follows from Lemmas \ref{hypercohomology} and \ref{gersten_chow}.
\end{proof}

\subsection{Chow Groups of Local Rings} The classical theorem of Auslander and Buchsbaum \cite{AuslanderBuchsbaum} asserts that a regular local ring $A$ is a unique factorization domain, and hence the divisor-class group $\ch^1(A) = 0$. The following conjecture, proposed by Claborn and Fossum, asserts that same should hold for algebraic cycles of any (positive) codimension:

\begin{conj}[Claborn-Fossum Conjecture]\label{CF}\cite{ClabornFossum} If $(A,\fm_A,L)$ is a regular local ring, then $\ch^p(A) = 0$ for all $p > 0$.
\end{conj}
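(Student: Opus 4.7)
The approach is to reduce the conjecture to a statement about Gersten cohomology via Lemma \ref{gersten_chow}, which identifies $\ch^p(A) = H^p(\mcg_p^{\cdot}(A))$, and then to split the argument into three cases according to the characteristic structure of $A$: (i) $A$ equicharacteristic; (ii) $A$ of mixed characteristic and unramified; (iii) $A$ of mixed characteristic and ramified.

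Case (i) follows from Panin's Theorem \ref{panin}, which gives $H^m(\mcg_p^{\cdot}(A)) = 0$ for all $m > 0$ and all $p \geq 0$. For case (ii), the map $\mathbb{Z}_{(p)} \to A$ is geometrically regular by Example \ref{geomregexmps}(d), so Proposition \ref{gl_improved} produces a two-term quasi-isomorphism $\bigl( 0 \to K_n(A[\ovp]) \to K_{n-1}(A/pA) \to 0 \bigr) \simeq \mcg_n^{\cdot}(A)$. This immediately yields $\ch^p(A) = H^p(\mcg_p^{\cdot}(A)) = 0$ for $p \geq 2$, and the remaining $p=1$ case is the classical Auslander--Buchsbaum theorem that an RLR is a UFD.

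The substantive obstacle is case (iii). When $A$ is ramified, $A/pA = A/\pi^e A$ is non-reduced, so $\mathbb{Z}_{(p)} \to A$ is not geometrically regular and the paper's machinery does not apply directly. My strategy is to first pass to the completion $A \to \hat A$ (which is geometrically regular when $A$ is excellent) and then invoke Cohen's structure theorem to exhibit a finite flat extension $A_0 \hookrightarrow \hat A$ with $A_0$ an unramified complete RLR of the same dimension (for instance $A_0 = W(L)[[x_1, \ldots, x_{d-1}]]$). Case (ii) gives $\ch^p(A_0) = 0$, and one then attempts to transfer this vanishing to $\hat A$ by combining the flat pullback $\ch^p(A_0) \to \ch^p(\hat A)$ with a norm or pushforward argument along the finite map $\Spec \hat A \to \Spec A_0$. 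Finally, descent from $\hat A$ back to $A$ in the excellent case can be carried out using the filtration of Section \ref{gersten_filter} together with the limit technique of Proposition \ref{panin_limit}, now applied to a geometrically regular map of (possibly non-local) schemes.

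The hard part is the transfer step in case (iii). The projection formula $f_* f^* = [\hat A : A_0] \cdot \mathrm{id}$ for the finite flat $f : \Spec \hat A \to \Spec A_0$ controls only the image of $f^*$ in $\ch^p(\hat A)$, so the vanishing of $\ch^p(A_0)$ does not automatically propagate. One would need either a norm-type retraction on Gersten cohomology, or a refinement of the filtration of Section \ref{gersten_filter} tailored to the ramified extension, in order to bridge this gap; both approaches face genuine obstructions when the extension is not Galois and the codimension exceeds one. The non-excellent case would require a separate argument, since $A \to \hat A$ need not be geometrically regular. These are precisely the barriers that have kept case (iii) --- and hence the full Claborn--Fossum conjecture --- open to date.
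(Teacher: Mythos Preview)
The statement you are addressing is a \emph{conjecture}, not a theorem, and the paper does not prove it. The paper explicitly labels it as Conjecture~\ref{CF} and then records only partial progress in Corollary~\ref{cf_results}: vanishing of $\ch^n(A)$ for $n > \dim R$ when $A$ is Henselian and geometrically regular over $R$, and full vanishing when $R$ is a DVR. There is therefore no ``paper's own proof'' to compare against.

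Your treatment of cases (i) and (ii) is correct and matches exactly what the paper establishes. Case (i) is Panin's theorem; case (ii) is precisely the argument of Corollary~\ref{cf_results}(ii), using Proposition~\ref{gl_improved} to reduce $\mcg_n^{\cdot}(A)$ to a two-term complex and then invoking Auslander--Buchsbaum for $p=1$. Your identification of the ramified mixed-characteristic case as the genuine obstruction is also correct, and you are right that the paper's machinery---which requires geometric regularity over the base---does not apply there.

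Your sketched attack on case (iii) via completion and a finite flat cover by an unramified $A_0$ is a natural idea, but as you yourself note, the projection formula only gives $f_* f^* = e \cdot \operatorname{id}$ on $\ch^p(A_0)$, which is the wrong direction: it shows $f^*$ has torsion cokernel, not that $\ch^p(\hat A)$ vanishes. One would need a retraction of $f^*$, and no such norm map on Gersten cohomology is available in codimension $\geq 2$ for a possibly non-Galois, wildly ramified extension. So your proposal is not a proof but an accurate survey of where the problem stands; the conjecture remains open in the ramified case, consistent with the paper's presentation.
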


Since we can identify $\ch^p(A)$ with the $p$-th cohomology group of the Gersten complex on $A$ (Lemma \ref{gersten_chow}), it is clear that this triviality of local Chow groups is implied by Gersten's Conjecture. Thus, The Claborn-Fossum Conjecture \ref{CF} has been settled in the equicharacteristic case by the work of Panin \cite{Panin} and Quillen \cite[VII.5.11]{Quillen}; when $A$ is essentially smooth over a DVR, the result follows from \cite{GilletLevine}. For a non-$K$-theoretic proof, see \cite{DuttaChowII} and \cite{DuttaSmooth}.

\begin{cor}\label{cf_results}Let $(R,\fm_R,k) \to (A,\fm_A,L)$ be a geometrically regular map of Noetherian local rings.
\begin{enumerate}
\item[(i)] If $A$ is Henselian, $\ch^n(A) = 0$ for all $n > \dim R$.
\item[(ii)] If $R$ is a discrete valuation ring, then $\ch^n(A) = 0$ for all $n > 0$.
\end{enumerate}
\end{cor}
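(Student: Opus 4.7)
The plan is to reduce both parts of the corollary to the principal identification $\ch^n(A) = H^n(\mcg_n^{\cdot}(A))$ furnished by Lemma \ref{gersten_chow}, and then invoke the preceding vanishing theorems of this paper. After this translation, part (i) becomes essentially a one-line consequence of the main theorem, while part (ii) requires additionally the stronger two-term description available in the DVR case, together with a small direct computation in degree one.

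For part (i), with $A$ Henselian and $R \to A$ geometrically regular, Theorem \ref{mainthm} tells us that $H^p(\mcg_n^{\cdot}(A)) = 0$ for all $p > \dim R$ and all $n \geq 0$. Specializing to $p = n > \dim R$, Lemma \ref{gersten_chow} then gives $\ch^n(A) = H^n(\mcg_n^{\cdot}(A)) = 0$, as required. No Henselization-avoidance step is necessary here because Theorem \ref{mainthm} already builds in this hypothesis.

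For part (ii), I would avoid Theorem \ref{mainthm} (whose bound $p > \dim R = 1$ is too weak) and instead use Proposition \ref{gl_improved}, which does not require $A$ to be Henselian: for each $n \geq 1$ there is a quasi-isomorphism
\[
\bigl(0 \to K_n(A[\tfrac{1}{\pi}]) \stackrel{\partial}{\longrightarrow} K_{n-1}(A/\pi A) \to 0\bigr) \stackrel{\sim}{\longrightarrow} \mcg_n^{\cdot}(A),
\]
where the two-term complex sits in cohomological degrees $0$ and $1$. For $n \geq 2$, the identification $\ch^n(A) = H^n(\mcg_n^{\cdot}(A))$ then forces $\ch^n(A) = 0$ simply because $H^n$ of a complex concentrated in degrees $0,1$ vanishes.

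The only remaining case is $n = 1$, and this is where a tiny bit of input is required. Here $\ch^1(A) = H^1$ of the two-term complex is the cokernel of the boundary map $\partial\colon K_1(A[\tfrac{1}{\pi}]) \to K_0(A/\pi A)$. Since $R \to A$ is flat with fibre $A/\pi A = A \otimes_R k$ a regular local ring (being geometrically regular over the field $k$), we have $K_0(A/\pi A) = \mathbb{Z}$ generated by the class of $A/\pi A$ itself. The element $\pi \in A[\tfrac{1}{\pi}]^{\times} \subset K_1(A[\tfrac{1}{\pi}])$ maps under $\partial$ to $[A/\pi A] = 1 \in \mathbb{Z}$, so $\partial$ is surjective and $\ch^1(A) = 0$. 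This completes part (ii). There is no genuine obstacle in the argument; the main input, Proposition \ref{gl_improved}, has already done all the work, and only the divisor-class calculation in codimension one has to be inserted by hand.
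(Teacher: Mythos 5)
Your argument is correct and follows the paper's approach almost exactly: part (i) is the immediate consequence of Theorem \ref{mainthm} combined with Lemma \ref{gersten_chow}, and part (ii) for $n \geq 2$ is the degree-vanishing consequence of Proposition \ref{gl_improved}, exactly as in the paper. The one point of genuine divergence is the $n=1$ case of part (ii): the paper simply observes that $A$ is a regular local ring (hence a UFD by the Auslander--Buchsbaum theorem), so $\ch^1(A)=0$, whereas you compute directly within the two-term complex that $\partial\colon K_1(A[\tfrac{1}{\pi}])\to K_0(A/\pi A)=\mathbb{Z}$ hits the generator $[A/\pi A]$ via $\partial([\pi])$. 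Both are valid; the paper's citation of Auslander--Buchsbaum is shorter and standard, while your version has the modest virtue of keeping the argument internal to the $K$-theoretic framework already in play, effectively re-deriving the codimension-one statement of the Claborn--Fossum conjecture rather than importing it. One small remark on your $n=1$ computation: you should confirm that $K_0(A/\pi A)\to K_0(A)$ is zero (so $\partial$ is indeed onto), which follows from the free resolution $0\to A\xrightarrow{\pi}A\to A/\pi A\to 0$ showing $[A/\pi A]=0$ in $K_0(A)$; this is implicit in your argument but worth making explicit.
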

\begin{proof}(i) is an immediate consequence of Theorem \ref{mainthm}: $\ch^n(A) = H^n(\mcg_n^{\cdot}(A)) = 0$ for $n > \dim R$.

For (ii), we note that by Lemma \ref{gl_improved}, there are, for each $n \geq 0$, quasi-isomorphisms of complexes
\[ \textstyle \left( 0 \to  K_n(A[\ovpi]) \stackrel{\partial}{\longrightarrow} K_{n-1}(A/\pi A) \to 0 \right) \stackrel{\sim}{\longrightarrow} \mcg_n^{\cdot}(A). \]
Hence, $\ch^n(A) = H^n(\mcg_n^{\cdot}(A)) = 0$ for $n > 1$. On the other hand, $\ch^1(A) = 0$ since $A$ is a UFD.
\end{proof}

\begin{exmp}Be advised that the Corollary \ref{cf_results}(i) says nothing about $\ch^i(A)$ when $i \leq \dim R$. For a classical example, let $R$ be the localization of $\mathbb{C}[X,Y,Z]/(X^2 - Y^5 - Z^7)$ at the origin. By \cite[VII.\S 3, Exercise 7]{Bourbaki}, $R$ is a two-dimensional UFD (in fact, the affine ring $\mathbb{C}[X,Y,Z]/(X^2 - Y^5 - Z^7)$ is as well). If we take $A$ to be the Henselization or completion of $R$, then $R \to A$ is geometrically regular (Example \ref{geomregexmps}) with $\ch^1(R) = 0$. However, by \cite[25.1]{Lipman}, $\ch^1(A) \neq 0$ as $A$ is not UFD.
\end{exmp}


\section*{Acknowledgments}
Over the course of the project, the author benefited from helpful discussions with B. Antieau, H. Gillet, M. Walker, and M. Woolf. This work is partially supported by an NSF RTG Grant (DMS-1246844).

\bibliographystyle{alpha}
\bibliography{relative_gersten}
\end{document}